\definecolor{blu}{rgb}{0,0,0.7}
\definecolor{rosso}{rgb}{0.85,0,0}
\definecolor{purple}{rgb}{0.63, 0.36, 0.94}
\definecolor{verde}{rgb}{0, 0.5, 0.2}
\definecolor{palecopper}{rgb}{0.85, 0.54, 0.4}
\def\pier #1{{\color{black}#1}}
\def\sk #1{{\color{black}#1}}
\def\pcol #1{{\color{black}#1}}
\def\rev #1{{\color{black}#1}}
\def\@cite#1#2{[{{\bfseries #1}\if@tempswa , #2\fi}]}
\renewcommand{\section}{%
\@startsection{section}{1}{\z@}
{0.5truecm plus -1ex minus -.2ex}%
{1.0ex plus .2ex}{\bfseries\large}}
\def\@seccntformat#1{\csname the#1\endcsname.\ }
\numberwithin{equation}{section} 
\newtheorem{thm}{Theorem}[section]
\newtheorem{lem}[thm]{Lemma}
\theoremstyle{definition}
\newtheorem{df}{Definition}[section]
\newtheorem{remark}{Remark}[section]
\newtheorem*{prth2.1}{Proof of Theorem 2.1}
\newtheorem*{prth2.2}{Proof of Theorem 2.2}
\newtheorem*{prth3.1}{Proof of Theorem 3.1}
\newcommand{\ep}{\varepsilon}
\def\iot {\int_0^t}
\def\iO{\int_\Omega}
\def\checkmmode #1{\relax\ifmmode\hbox{#1}\else{#1}\fi}
\let\hat\widehat
\def\Pi{\hat\pi}
\def\Vp{V^*}
\let\non\nonumber
\begin{document}
\footnote[0]
    {2010 {\it Mathematics Subject Classification}\/: 
    \pier{35G31; 35D30; 35A01; 80A22.}}
\footnote[0]
    {{\it Key words and phrases}\/: 
    \pier{conserved phase field system; inertial term;  non-isothermal process;  
    nonlinear partial differential equations; initial-boundary value problem; existence of solutions.}} 
%==========================title==========================
\begin{center}
    \Large{{\bf \rev{Analysis of a nonisothermal and conserved \\ phase field system 
    with inertial term}}}
\end{center}
\vspace{5pt}
%===========================author=========================
\begin{center}
    Pierluigi Colli\\
    \vspace{2pt}
    Dipartimento di Matematica ``F. Casorati", Universit\`a di Pavia \\ 
    and Research Associate at the IMATI -- C.N.R. Pavia \\ 
    via Ferrata 5, 27100 Pavia, Italy\\
    {\tt pierluigi.colli@unipv.it}\\
    \vspace{12pt}
    Shunsuke Kurima%
   \footnote{Corresponding author}\\
    \vspace{2pt}
    Department of Mathematics, 
    Tokyo University of Science\\
    1-3, Kagurazaka, Shinjuku-ku, Tokyo 162-8601, Japan\\
    {\tt shunsuke.kurima@gmail.com}\\
    \vspace{2pt}
\end{center}
%\begin{center}    
%    \small \today
%\end{center}

\vspace{2pt}
%=====================  Abstract  =======================
\newenvironment{summary}
{\vspace{.5\baselineskip}\begin{list}{}{%
     \setlength{\baselineskip}{0.85\baselineskip}
     \setlength{\topsep}{0pt}
     \setlength{\leftmargin}{12mm}
     \setlength{\rightmargin}{12mm}
     \setlength{\listparindent}{0mm}
     \setlength{\itemindent}{\listparindent}
     \setlength{\parsep}{0pt}
     \item\relax}}{\end{list}\vspace{.5\baselineskip}}
\begin{summary}
{\footnotesize {\bf Abstract.}
     This paper deals with a conserved phase field system 
\pier{that couples the energy balance equation with a Cahn--Hilliard type system
including temperature and the inertial term for the order parameter.}
%\begin{equation*}
%\begin{cases}
%\theta_t + \varphi_t - \Delta_{N}\theta = f  & \mbox{in}\ \Omega\times(0, T), \\[1mm]
%\tau\varphi_{tt} + \varphi_{t} - \Delta_{N}\mu = 0 
%& \mbox{in}\ \Omega\times(0, T), \\[1mm]
%- \Delta_{N}\varphi + \beta(\varphi) + \pi(\varphi) = \mu + \theta 
%& \mbox{in}\ \Omega\times(0, T).
%\end{cases}
%\end{equation*}
%Here $\Omega \subset \mathbb{R}^d$ ($d = 1, 2, 3$) is a bounded domain, 
%$T>0$, $\tau > 0$, $\Delta_{N}$ is the homogeneous Neumann Laplacian, 
%$\beta$ is a monotone function, $\pi$ is an anti-monotone function, 
%$f$ is a give function. 
%We note that the nonlinear term $\beta + \pi$ 
%represents the derivative of a potential of double well type.  
In the case \pier{without inertial term, 
the system under study was introduced by Caginalp. 
The inertial term is motivated by the occurrence of}
rapid phase transformation processes in nonequilibrium dynamics. \pcol{A double-well potential is well chosen and the related nonlinearity governing the evolution is assumed to satisfy a suitable growth condition.
\pcol{The viscous variant of the Cahn--Hilliard system is also considered along with the inertial term.}}
The existence of a global solution is proved via \pcol{the analysis of some approximate problems with Yosida regularizations, and the use of the Cauchy--Lipschitz--Picard theorem in an abstract setting. Moreover, we study 
the convergence of the system, with or without the viscous term, as the inertial coefficient tends to zero.}}

\end{summary}
\vspace{10pt}

\newpage

%\setlength{\baselineskip}{1.1\baselineskip}

%%==============================================================%%
%%==============                                  ==============%%
%%======                      Section1                    ======%%
%%====                                                      ====%%
%%==                                                         ==%%
%%====               Introduction          ====%%
%%======                                                  ======%%
%%==============                                  ==============%%
%%==============================================================%%

\section{Introduction} \label{Sec1}

{In this paper, we address \pier{a nonisothermal and conserved phase field system \rev{that} includes an inertial term 
for the order parameter. We \rev{analyze} the related initial-boundary value problem from \rev{a mathematical perspective 
and establish the} existence of solutions.}

\smallskip
\pier{Let us start by recalling that in the \rev{1990s} Caginalp introduced 
the following phase field system~\cite{Caginalp1, Caginalp2}}
 \begin{align} 
&\theta_t + \varphi_t - \Delta\theta = f, \label{e1}
\\[1mm]
&\varphi_t - \Delta\mu = 0, \label{e2}
\\[1mm]
&-\Delta\varphi + g(\varphi) = \mu + \theta, \label{e3}
\end{align}
\rev{where $\theta$ denotes the relative temperature (around a critical value normalized to zero)}, 
$\varphi$ is the order parameter \rev{and $f$ is a given a source term. The 
auxiliary variable $\mu$ serves to combine the second and third equations into a single fourth-order formulation.}
Moreover, $g$ is the derivative of a double well potential $G$\pier{\rev{:~a} typical choice for the double well potential is $G(r) = \dfrac{1}{4}(r^2 -1)^2$, 
so that  the nonlinearity in \eqref{e3} reduces to the usual cubic nonlinear term $g(r) = r^3 - r$, with $r $ denoting the real 
variable. \rev{In our work, we adopt the} decomposition $g = \beta + \pi$,  
with $\beta$ monotone increasing and with an anti-monotone decreasing function $\pi$.}

\smallskip
\pier{The system \eqref{e2}-\eqref{e3}, or the resulting combined equation, also coupled to other sets of equations for suitable physical models, is universally referred to Cahn and Hilliard~\cite{CH} and is the subject of a great number of recent investigations in the mathematical literature (see \cite{CM, CoGaMi, SM, CHbook} among many other contributions).}

\smallskip
\pier{About \eqref{e1}-\eqref{e3}, we can underline that this system is important for the modelling of 
phase segregation processes under non-isothermal conditions, in particular for  binary mixtures undergoing a phase 
separation at non-constant temperature. Different mathematical studies have been devoted to this class of problems (see, e.g., \cite{Bro, BHN, G2007, M2013, CGSS1, CGSS2}; in particular, in \cite{CGSS1} you can find a recent overview).}} 

\smallskip
\pier{This paper deals with a variation of the system \eqref{e1}-\eqref{e3} in which we include the inertial term for the 
order parameter. Indeed, in order to account for rapid phase transformation processes in nonequilibrium dynamics, the term $\pier{\tau} \varphi_{tt}$ is added in equation~\eqref{e2}. As far as we know, similar investigations have been carried out for the Cahn--Hilliard system \rev{or more general systems including the Cahn--Hilliard structure} by considering hyperbolic relaxations of it (cf., e.g., \rev{\cite{Bon1, Bon2, CGW, Dor, DMP, GPS}}).}

\pier{\rev{The specific system we analyze is given by:}
\begin{equation}\label{Ptau}\tag*{(P)$_{\tau}$}
\begin{cases}
\theta_t + \varphi_t - \Delta\theta = f  & \mbox{in}\ \Omega\times(0, T), \\[1mm]
\tau \varphi_{tt} + \varphi_{t} - \Delta\mu = 0 
& \mbox{in}\ \Omega\times(0, T), \\[1mm]
- \Delta\varphi + \beta(\varphi) + \pi(\varphi) = \mu + \theta 
& \mbox{in}\ \Omega\times(0, T), \\[1mm]
\partial_{\nu} \theta =  \partial_{\nu} \mu = \partial_{\nu} \varphi = 0
& \mbox{on}\ \partial\Omega\times(0, T), \\[1mm]
\theta(0) = \theta_0,\ \varphi(0) = \varphi_0,\ \varphi_{t}(0) = v_0 
& \mbox{in}\ \Omega. 
\end{cases}
\end{equation}
Here, $\Omega \subset \mathbb{R}^d$ ($d = 1, 2, 3$) is a bounded and connected domain with smooth boundary $\partial \Omega$
and $T>0$ denotes some final time. Moreover, $\tau > 0$ represents the inertial coefficient, $\Delta$ is the Laplacian and
$\partial_{\nu}$ denotes the outward normal derivative on the boundary. As for 
$\beta$, $\pi$, the known function $f$ and the initial data we set suitable assumptions.}
\sk{\pcol{In addition to the problem~\ref{Ptau}, we also consider a viscous variant of it which has an interest in itself and consists in including a term proportional to the time derivative of $\varphi$ in the third equation of~\ref{Ptau}. Indeed, we introduce the problem 
\begin{equation}\label{Ptaueta}\tag*{(P)$_{\tau, \eta}$}
\begin{cases}
\theta_t + \varphi_t - \Delta\theta = f  & \mbox{in}\ \Omega\times(0, T), \\[1mm]
\tau \varphi_{tt} + \varphi_{t} - \Delta\mu = 0 
& \mbox{in}\ \Omega\times(0, T), \\[1mm]
\eta\varphi_{t} - \Delta\varphi + \beta(\varphi) + \pi(\varphi) = \mu + \theta 
& \mbox{in}\ \Omega\times(0, T), \\[1mm]
\partial_{\nu} \theta =  \partial_{\nu} \mu = \partial_{\nu} \varphi = 0
& \mbox{on}\ \partial\Omega\times(0, T), \\[1mm]
\theta(0) = \theta_0,\ \varphi(0) = \varphi_0,\ \varphi_{t}(0) = v_0 
& \mbox{in}\ \Omega, 
\end{cases}
\end{equation}
for $\eta \in (0, 1]$, and we also deal with the analysis of~\ref{Ptaueta}.}}

\medskip

\pcol{To establish a global existence result for the problem~{\ref{Ptaueta}} we first approximate the monotone nonlinearity
$\beta$ using its Yosida regularization, and this will be the approximation with parameter $\ep>0$. At the second level of approximation, with parameter $\lambda >0$, we replace the operator $-\Delta$ in $L^2(\Omega) $ with its Yosida regularization. The resulting problem is then analyzed using the Cauchy--Lipschitz--Picard theorem. Subsequently, we derive uniform estimates for both approximations and employ compactness arguments to pass to the limit, thereby proving the existence of solutions not only for {\ref{Ptaueta}} but also for \ref{Ptau} as $\eta \searrow 0$. Furthermore, we investigate the asymptotic behavior of both {\ref{Ptaueta}} and \ref{Ptau} as $\tau $ approaches zero.} 

\rev{While certain aspects of our analysis can be compared to those in~\cite{CGW , GPS}, our approach introduces some novel aspects. Firstly, we consider a nonisothermal conserved phase field system that includes a forcing term $f$ in the phase equation -- an element not present in previous studies. Secondly, our treatment of the nonlinear potential is structurally distinct: rather than using a single nonlinear function, we adopt a decomposition into $ \beta + \pi$, which accommodates a broader class of nonlinearities. From a methodological standpoint, our analysis diverges from the traditional use of Faedo--Galerkin approximations. Instead, we employ Yosida regularizations for second-order operators and leverage the abstract framework of nonlinear evolution equations. This alternative strategy provides a different and versatile analytical pathway.}

\rev{Finally, we prove the convergence of solutions as the inertial coefficient approaches zero, thereby rigorously linking the hyperbolic system to its parabolic counterpart. To the best of our knowledge, this singular perturbation analysis represents a new contribution in this setting.}

\rev{In conclusion, while our analytical framework builds on established abstract theory, this work addresses a nonisothermal conserved phase field model with inertial effects -- a setting that has received limited attention in the literature. The combined presence of temperature dependence and a second-order time derivative introduces significant mathematical challenges, particularly in proving global well-posedness and analyzing the vanishing inertial limit. Our approach, which includes deriving uniform estimates independent of the inertial parameter and handling coupled nonlinearities in the thermal and phase field equations, offers tools that may extend to other thermodynamically consistent systems involving memory, hysteresis, or fluid-structure interactions. We believe this study provides a solid foundation for further investigations into similar models in non-equilibrium thermodynamics, complex fluids, and biological systems.}

\section{Main results} \label{Sec1bis}

\pier{Regarding the data of the problem \ref{Ptau},  we assume} the following conditions (C1)-(C3): 
\begin{enumerate} 
\setlength{\itemsep}{1mm}
\item[(C1)] $\beta : \mathbb{R} \to \mathbb{R}$                                
is monotone and continuous \pier{with $\beta(0)=0$.}
We denote by $\widehat{\beta} : \mathbb{R} \to [0, +\infty)$ 
the unique $C^1$ convex function such that 
$\widehat{\beta}(0) = 0$ and $\widehat\beta'=\beta$: 
as usual,  we identify
$\beta$ with the subdifferential $\partial\widehat\beta$
in the sense of monotone analysis. We further assume that there are
an exponent $q>1$ and a constant $c_{\beta} > 0$ such that 
\[
  |\beta(r)|^q \leq c_{\beta}\bigl( 1 + \widehat\beta(r) \bigr)
\quad\forall\,r\in\mathbb R,
\]
where also $q\geq 6/5$ if $d=3$.
\item[(C2)] $\pi : \mathbb{R} \to \mathbb{R}$ 
is Lipschitz continuous. 
\item[(C3)] $f \in L^2(0, T; (H^1(\Omega))^{*})$, 
$\theta_0 \in L^2(\Omega)$, $\varphi_0 \in H^1(\Omega)$, 
$\widehat{\beta}(\varphi_0) \in L^{1}(\Omega)$, 
$v_0 \in (H^1(\Omega))^{*}$.
\end{enumerate}

\smallskip

\begin{remark}\label{rem1}
  Let us comment on the condition (C1). 
  First of all, we recall the embeddings
  $H^1(\Omega)\hookrightarrow L^\infty(\Omega)$ in $d=1$, 
  $H^1(\Omega)\hookrightarrow L^p(\Omega)$ for all $p\in[1,+\infty)$ in $d=2$, 
  and $H^1(\Omega)\hookrightarrow L^6(\Omega)$ in $d=3$. 
  Hence, it readily holds that
  \[
  L^q(\Omega)\subset (H^1(\Omega))^*
  \qquad\text{in every space dimension } d=1,2,3\pier{.}
  \]
  \pier{At the same time,} let us briefly discuss 
  the growth condition in (C1): note that $\beta$ can have any polynomial
  growth at infinity in dimensions $d=1,2$, 
  and any polynomial growth up to \pier{the}
  $5$-th order in dimension $d=3$.
  In particular, we note that the classical fourth order double-well potential, 
  leading to $\beta (r) = r^3$ and $\pi (r) = - r$, with $r\in \mathbb{R}$,  
  satisfies (C1) and (C2) in every space dimension.
\end{remark}

\smallskip
Let us \pier{also introduce}  the Hilbert spaces 
  \begin{align*} 
   H:=L^2(\Omega), \quad V:=H^1(\Omega)
\end{align*} 
 with inner products 
\begin{align*} 
&(u_{1}, u_{2})_{H}:=\int_{\Omega}u_{1}u_{2}\,dx \quad  (u_{1}, u_{2} \in H), 
\\
&(v_{1}, v_{2})_{V}:=
 \int_{\Omega}\nabla v_{1}\cdot\nabla v_{2}\,dx + \int_{\Omega} v_{1}v_{2}\,dx \quad 
 (v_{1}, v_{2} \in V),
\end{align*}
respectively,
and with the related Hilbertian norms.
Moreover, we \pier{set} 
\begin{align*} 
  W:=\bigl\{z\in H^2(\Omega)\ |\ \partial_{\nu}z = 0 \quad 
        \mbox{a.e.\ on}\ \partial\Omega\bigr\}.
\end{align*} 
The notation $V^{*}$ denotes the dual space of $V$ with 
duality pairing $\langle\cdot, \cdot\rangle_{V^*, V}$.
\pier{We identify $H$ with a subspace of $V$ in the usual way, i.e.,
in order that 
\begin{align*}
	\text{$\langle u, v \rangle_{V^*, V}=(u,v)_{H}$ 
	\quad 
	for every $u\in H$ and $v\in V$.}
\end{align*}
This makes $(V,H,V^*)$ a Hilbert triplet.
We notice that all of the embeddings
$
  W \hookrightarrow V \hookrightarrow H \hookrightarrow V^*
$ are dense and compact.
For any element $v\in V^*$ we can define the (generalized) mean value $m(v) $ by
\begin{equation}
\hbox{$m(v) := \dfrac{1}{|\Omega|}\langle v, 1 \rangle_{V^*, V}$ \quad $(v \in V^*)$,}
\label{pier8}
\end{equation}
where $1$ stands for the constant function 1 in $V$.
Moreover, we also introduce the duality mapping $F : V \to V^{*}$ defined by
    \begin{align}
    &\langle Fv_{1}, v_{2} \rangle_{V^*, V} := 
    (v_{1}, v_{2})_{V} \ \quad (v_{1}, v_{2}\in V),   
    \label{defF}
\end{align}
so that we also have
\begin{align}
    &(v_{1}^{*}, v_{2}^{*})_{V^{*}} := 
    \left\langle v_{1}^{*}, F^{-1}v_{2}^{*} 
    \right\rangle_{V^*, V} 
    \ \quad (v_{1}^{*}, v_{2}^{*}\in V^{*}).   
    \label{innerVstar}
    \end{align}}%   
%
%   SOME DEFINITIONS BY PIER
%
\def\<#1>{\mathopen\langle #1\mathclose\rangle_{V^*, V}}%
\def\norma #1{\mathopen \| #1\mathclose \|}%
\def\normaVp #1{\norma{#1}_*}%
\pier{We need another tool, represented by the operator $\cal N$.  Consider, for $\psi\in V^*$, the problem of finding
\begin{equation}
  u \in V
  \quad \hbox{such that} \quad
  \int_\Omega \nabla u \cdot \nabla v
  = \< \psi , v >
  \quad \hbox{for every $v\in V$}.
  \label{neumann}
\end{equation}
Now, since $\Omega$ is connected, it is not difficult to see that, for $\psi\in V^*$, \eqref{neumann} is solvable if and only if $\psi$ has zero mean value. Moreover, in this case {there is a unique solution} $u$ with zero mean value.
This {entails} that the operator \begin{align}
  & {\cal N}: D({\cal N}) := \{\psi\in\Vp:\ m(\psi) =0\} \to \{u\in V:\ m(u)=0\},
   \non
  \\
  & \hbox{for }\, \psi \in D({\cal N}), \, \ {\cal N} (\psi)\, \hbox{ is the unique $u$ solving \eqref{neumann} and  $m(u)=0$},
  \qquad\quad
  \label{defN}
\end{align}
is well defined and {yields} an isomorphism between the above spaces.
{In addition,} the formula
\begin{equation}
  \normaVp\psi^2 := \norma{\nabla{\cal N}(\psi-m(\psi))}^2 + |m(\psi)|^2
  \quad \hbox{for $\psi\in\Vp$}
  \label{normaVp}
\end{equation}
defines another norm in $\Vp$ that is equivalent to the standard norm $\|\,\cdot\,\|_{V^*}$.
From the above definitions one trivially derives~that
\begin{align}
  & \iO \nabla{\cal N}\psi \cdot \nabla v
  = \< \psi , v >
  \quad \hbox{for every $\psi\in D({\cal N})$ and $v\in V$},
  \label{dadefN}
  \\[2mm]
  & \< \psi , {\cal N}\zeta >
  = \< \zeta , {\cal N}\psi >
  \quad \hbox{for every $\psi,\zeta\in D({\cal N})$},
  \label{simmN}
  \\[2mm]  
  & \< \psi , {\cal N}\psi > 
  = \iO |\nabla{\cal N}\psi|^2
  = \normaVp\psi^2
  \quad \hbox{for every $\psi\in D({\cal N})$}.
  \label{danormaVp}
\end{align}
Moreover, it turns out that
\begin{align}
  &\iot \< \sk{v_{t}(s)} , {\cal N} v(s) > \, ds
  = \iot \< v(s) , \sk{{\cal N}(v_{t}(s))} > \, ds \notag\\
  &= \frac 12 \, \normaVp{v(t)}^2
  - \frac 12 \, \normaVp{v(0)}^2
  \label{propN} 
\end{align}
for every $t\in[0,T]$ and every $v\in H^1 (0,T;V^*)$ satisfying $m(v)=0$ a.e. in $(0,T)$.}
  
%We also consider \pier{the following subspace of $V$, i.e.}
%\begin{align*} 
%V_{0}:=\left\{ z \in H^1(\Omega)\ \Big{|} \ \int_{\Omega} z = 0 \right\}   
%\end{align*} 
%which \pier{is a Hilbert space with inner product
% \begin{align*} 
% (v_{1}, v_{2})_{V_{0}}:=
% \int_{\Omega}\nabla v_{1}\cdot\nabla v_{2}\,dx %+ \int_{\Omega} v_{1}v_{2}\,dx 
% \quad (v_{1}, v_{2} \in V_{0}) 
%\end{align*}
%and with the related norm.}
%The notation $V_{0}^{*}$ denotes the dual space of $V_{0}$ with 
%duality pairing $\langle\cdot, \cdot\rangle_{V_{0}^*, V_{0}}$. 
%Moreover, \pier{we can introduce} the bijective mapping 
%${\cal N} : V_{0}^{*} \to V_{0}$ and 
%the inner product in $V_{0}^{*}$ are specified by  
%    \begin{align}
%    &\langle v^{*}, v \rangle_{V_{0}^*, V_{0}}  =:  
%    \int_{\Omega} \nabla {\cal N}v^{*} \cdot \nabla v 
%    \quad (v^{*} \in V_{0}^*, v \in V_{0}),   
%    \label{defN}
%    \\[1mm]
%    &(v_{1}^{*}, v_{2}^{*})_{V_{0}^{*}} := 
%    \left\langle v_{1}^{*}, {\cal N}v_{2}^{*} 
%    \right\rangle_{V_{0}^*, V_{0}} 
%    \quad (v_{1}^{*}, v_{2}^{*}\in V_{0}^{*}).  
%    \label{innerVzerostar}
%    \end{align}

\smallskip
We define weak solutions of \sk{the problems \ref{Ptau}, \ref{Ptaueta} as follows.} 

%%%%%%%%%%%%%%%%%%%%%%DefP%%%%%%%%%%%%%%%%%%%%%%%%%%%
\begin{df}
A triplet $(\theta, \mu, \varphi)$ with 
\begin{align}
&\theta \in H^1(0, T; V^*) \cap L^{\infty}(0, T; H) \cap L^2(0, T; V), \label{tausp1}\\
&\displaystyle \pier{t \mapsto  w(t):=\int_{0}^{t} \mu(s)\,ds \ \hbox{ is in }}\
W^{1, \infty}(0, T; V^{*}) \cap L^{\infty}(0, T; V), \label{tausp2}\\
&\varphi \in W^{1, \infty}(0, T; V^{*}) \cap L^{\infty}(0, T; V), \label{tausp3}\\[1mm]
&\beta(\varphi) \in L^{\infty}(0, T; L^q(\Omega)) \label{tausp4}
\end{align}
is called a {\it weak solution} of \ref{Ptau} if 
%$(\theta, \mu, \varphi)$ satisfies 
\begin{align}
&\langle \theta_{t}, v \rangle_{V^{*}, V} 
  + \int_{\Omega}\nabla\theta\cdot\nabla v 
\notag \\ 
  &= \langle f - \varphi_{t}, v \rangle_{V^{*}, V} 
       \qquad \mbox{a.e.\ on}\ (0, T) \quad \mbox{for all}\ v \in V, 
\label{dfPtausol1}   
\\[4mm] 
&\pier{\tau} \langle \varphi_{t}, v \rangle_{V^{*}, V} 
   + (\varphi, v)_{H} 
   + \int_{\Omega}\nabla w \cdot \nabla v 
\notag \\ 
&=  \pier{{} \tau \langle v_{0}, v \rangle_{V^{*}, V}{}} + (\varphi_{0}, v)_{H} 
     \qquad \mbox{a.e.\ on}\ (0, T) \quad \mbox{for all}\ v \in V,  \label{dfPtausol2} 
\\[4mm] 
&\int_{\Omega} \nabla \varphi \cdot \nabla v 
  + \langle \beta(\varphi), v \rangle_{V^{*}, V} 
\notag \\ 
&= \langle w_{t}, v \rangle_{V^{*}, V} 
     + (\theta - \pi(\varphi), v)_{H}
         \qquad \mbox{a.e.\ on}\ (0, T) \quad \mbox{for all}\ v \in V,  \label{dfPtausol3} 
\\[4mm]
&\theta(0) = \theta_{0},\ \varphi(0) = \varphi_{0}
        \quad \mbox{a.e.\ in}\ \Omega. \label{dfPtausol4}
\end{align}
\end{df}

\pcol{Note that the regularity properties in \eqref{tausp1}-\eqref{tausp3} imply that the three functions $\theta, w, \varphi 
$ are all continuous from $[0,T]$ to $H$, so that the initial conditions \eqref{dfPtausol4} make sense.}

%%%%%%%%%%%%%%%%%%%%%%DefP%%%%%%%%%%%%%%%%%%%%%%%%%%%
\sk{\begin{df}
A triplet $(\theta_{\eta}, \mu_{\eta}, \varphi_{\eta})$ with 
\begin{align}
&\theta_{\eta} \in H^1(0, T; V^*) \cap L^{\infty}(0, T; H) \cap L^2(0, T; V), \label{etasp1} 
\\
&\displaystyle t \mapsto  w_{\eta}(t):=\int_{0}^{t} \mu_{\eta}(s)\,ds \ \hbox{ is in }\ 
W^{1, \infty}(0, T; V^{*}) \cap L^{\infty}(0, T; V), \label{etasp2} 
\\
&\varphi_{\eta} \in W^{1, \infty}(0, T; V^{*}) \cap H^{1}(0, T; H)  \cap L^{\infty}(0, T; V), \label{etasp3} 
\\[1mm]
&\beta(\varphi_{\eta}) \in L^{\infty}(0, T; L^q(\Omega)) \label{etasp4}
\end{align}
is called a {\it weak solution} of \ref{Ptaueta} if 
%$(\theta, \mu, \varphi)$ satisfies 
\begin{align}
&\langle (\theta_{\eta})_{t}, v \rangle_{V^{*}, V} 
  + \int_{\Omega}\nabla\theta_{\eta}\cdot\nabla v 
\notag \\ 
  &= \langle f - (\varphi_{\eta})_{t}, v \rangle_{V^{*}, V} 
       \qquad \mbox{a.e.\ on}\ (0, T) \quad \mbox{for all}\ v \in V, 
\label{dfPtauetasol1}   
\\[4mm] 
&\tau \langle (\varphi_{\eta})_{t}, v \rangle_{V^{*}, V} 
   + (\varphi_{\eta}, v)_{H} 
   + \int_{\Omega}\nabla w_{\eta} \cdot \nabla v 
\notag \\ 
&= \tau \langle v_{0}, v \rangle_{V^{*}, V} + (\varphi_{0}, v)_{H} 
     \qquad \mbox{a.e.\ on}\ (0, T) \quad \mbox{for all}\ v \in V,  \label{dfPtauetasol2} 
\\[4mm] 
&\eta((\varphi_{\eta})_{t}, v)_{H} + \int_{\Omega} \nabla \varphi_{\eta} \cdot \nabla v 
  + \langle \beta(\varphi_{\eta}), v \rangle_{V^{*}, V} 
\notag \\ 
&= \langle (w_{\eta})_{t}, v \rangle_{V^{*}, V} 
     + (\theta_{\eta} - \pi(\varphi_{\eta}), v)_{H}
         \qquad \mbox{a.e.\ on}\ (0, T) \quad \mbox{for all}\ v \in V,  \label{dfPtauetasol3} 
\\[4mm]
&\theta(0) = \theta_{0},\ \varphi(0) = \varphi_{0}
        \quad \mbox{a.e.\ in}\ \Omega. \label{dfPtauetasol4}
\end{align}
\end{df}}%

\sk{The following result asserts existence of weak solutions 
to \ref{Ptaueta}: 
\begin{thm}\label{maintheorem1}
Assume that {\rm (C1)-(C3)} hold. 
Then for all $\tau, \eta \in (0, 1]$ 
there exists a weak solution of the problem {\rm \ref{Ptaueta}}.
\end{thm}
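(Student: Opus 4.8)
\emph{Overall strategy.} The plan is to build the solution by a double Yosida regularization, to solve each regularized problem as an abstract ODE via the Cauchy--Lipschitz--Picard theorem, to derive estimates that are uniform with respect to \emph{both} regularization parameters, and finally to pass to the limit. Fix $\tau,\eta\in(0,1]$. As a first level of approximation (parameter $\ep>0$) we replace the maximal monotone $\beta$ by its Yosida regularization $\beta_\ep$, which is Lipschitz, satisfies $\beta_\ep(0)=0$ and $0\le\widehat{\beta_\ep}\le\widehat\beta$, and inherits from (C1) the bound $|\beta_\ep(r)|^q\le c_\beta\bigl(1+\widehat{\beta_\ep}(r)\bigr)\le c_\beta\bigl(1+\widehat\beta(r)\bigr)$; since $f$ and $v_0$ live only in $L^2(0,T;V^*)$ and $V^*$, we also approximate them by smoother $f_\ep\in C([0,T];H)$ and $v_{0\ep}\in H$ converging in the respective spaces. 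As a second level (parameter $\lambda>0$) we replace the Neumann Laplacian $A=-\Delta$ realized in $H$ by its Yosida approximation $A_\lambda=\lambda^{-1}\bigl(I-(I+\lambda A)^{-1}\bigr)$, a bounded, self-adjoint, Lipschitz operator on $H$ that preserves spatial mean values. This yields, for each $\ep,\lambda>0$, an approximating problem $\mathrm{(P)}_{\tau,\eta,\ep,\lambda}$ in which every operator is bounded on $H$.

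\emph{Solving the doubly regularized problem.} Using the regularized third equation to express $\mu$ algebraically in terms of $\theta,\varphi,\varphi_t$, and setting $\psi:=\varphi_t$, the problem $\mathrm{(P)}_{\tau,\eta,\ep,\lambda}$ becomes a first-order system $U'=\mathcal F(t,U)$ for $U=(\theta,\varphi,\psi)$ in $H\times H\times H$. Because $A_\lambda$, $\beta_\ep$ and $\pi$ are Lipschitz on $H$ and all the remaining contributions are bounded and linear, $\mathcal F(t,\cdot)$ is globally Lipschitz with a constant independent of $t$, and $t\mapsto\mathcal F(t,U)$ is continuous; the Cauchy--Lipschitz--Picard theorem therefore yields a unique solution on all of $[0,T]$ (global, since the Lipschitz bound is uniform), and $\mu$ and $w(t):=\int_0^t\mu(s)\,ds$ are recovered afterwards.

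\emph{Uniform estimates.} This is the heart of the proof. We test the first equation by $\theta$, the third by $\varphi_t$, and the second by $\mathcal N$ applied to the zero-mean part of $\varphi_t$; the mean $m(\varphi_t)$ solves the scalar linear ODE $\tau\dot m+m=0$, so it is explicitly bounded and contributes only lower-order terms (at the $\lambda$-level the multiplier must be adjusted slightly, with errors vanishing as $\lambda\to0$). Summing, the coupling $(\theta,\varphi_t)$ cancels between the first and third equations and $(\mu,\varphi_t)$ cancels between the second and third, leaving a differential inequality for
\[
\tfrac12\|\theta\|_H^2+\tfrac\tau2\|\varphi_t\|_*^2+\tfrac12\|\nabla\varphi\|_H^2+\int_\Omega\widehat{\beta_\ep}(\varphi)
\]
with dissipation $\|\nabla\theta\|_H^2+\eta\|\varphi_t\|_H^2+\|\varphi_t\|_*^2$; the source $f$ is absorbed by Young's inequality, and the $\pi$-term is controlled by bounding $\|\pi(\varphi)\|_V$ through the energy and absorbing it into $\|\varphi_t\|_*^2$ (this incidentally keeps the constants independent of $\tau$ and $\eta$ as well, which is needed later). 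Gronwall's lemma then bounds $\theta$ in $L^\infty(0,T;H)\cap L^2(0,T;V)$, $\varphi$ in $L^\infty(0,T;V)$, $\int_\Omega\widehat{\beta_\ep}(\varphi)$ in $L^\infty(0,T)$, and $\varphi_t$ in $L^\infty(0,T;V^*)$ and (for $\eta$ fixed) in $L^2(0,T;H)$; by (C1) this upgrades to a bound for $\beta_\ep(\varphi)$ in $L^\infty(0,T;L^q(\Omega))$, and comparison in the equations gives $\theta_t$ in $L^2(0,T;V^*)$ and $\mu=w_t$ in $L^\infty(0,T;V^*)$, whence $w\in W^{1,\infty}(0,T;V^*)\cap L^\infty(0,T;V)$ (the last via the time-integrated second equation). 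All of these bounds are independent of $\ep$ and $\lambda$.

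\emph{Passage to the limit.} We let first $\lambda\to0$ with $\ep$ fixed, and then $\ep\to0$. In each step, weak-$*$ compactness extracts convergent subsequences, and the Aubin--Lions--Simon lemma gives strong convergence of $\varphi$ in $C([0,T];H)$ and of $\theta$ in $L^2(0,T;H)$ (hence a.e., up to subsequences). In the $\lambda$-limit the diffusion terms are identified by transferring the resolvent $(I+\lambda A)^{-1}$ onto fixed smooth test functions, and $\beta_\ep(\varphi)$ and $\pi(\varphi)$ pass to the limit by the strong convergence of $\varphi$; this produces a weak solution of the intermediate problem (the $\ep$-level version of $\mathrm{(P)}_{\tau,\eta}$, with $\beta$ replaced by $\beta_\ep$). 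In the $\ep$-limit the only delicate point is to identify the weak-$*$ limit $\xi$ of $\beta_\ep(\varphi_\ep)$ in $L^\infty(0,T;L^q(\Omega))$ with $\beta(\varphi)$: from $\varphi_\ep\to\varphi$ strongly, the graph convergence of $\beta_\ep$ to $\beta$, and the inequality $\limsup_{\ep\to0}\int_0^T\!\!\int_\Omega\beta_\ep(\varphi_\ep)\,\varphi_\ep\le\int_0^T\!\!\int_\Omega\xi\,\varphi$ (obtained by testing the equations with $\varphi_\ep$ and $\varphi$), the standard maximal-monotonicity argument yields $\xi=\beta(\varphi)$ a.e. The initial conditions survive because of the strong $C([0,T];H)$ convergence, and the regularity \eqref{etasp1}--\eqref{etasp4} follows from the uniform bounds by weak lower semicontinuity. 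I expect the main obstacle to be the uniform estimate above: the inertial term $\tau\varphi_{tt}$ forces one to control $\varphi_t$ only in a dual norm, the conserved fourth-order structure couples the three equations, and $m(\varphi_t)$ is no longer conserved, so choosing multipliers that simultaneously cancel both couplings, absorb the $\pi$-contribution, and keep every constant free of $\ep,\lambda$ (and of $\tau,\eta$) is the technical core; a secondary difficulty is the final identification of $\beta(\varphi)$, which is precisely where hypothesis (C1) enters.
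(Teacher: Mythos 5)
Your proposal follows essentially the same route as the paper: double Yosida regularization (of $\beta$ with parameter $\ep$ and of the Neumann Laplacian with parameter $\lambda$), solution of the doubly regularized problem as a Lipschitz ODE in $H\times H\times H$ via Cauchy--Lipschitz--Picard, the same energy estimate with the same three multipliers (testing by $\theta$, by $\mathcal N$ applied to the zero-mean part of $\varphi_t$, and by $\varphi_t$ minus its mean, exploiting the explicit decay $m(\varphi_t)=m(v_0)e^{-t/\tau}$ and the two cancellations), and two successive limit passages. The organization differs slightly -- the paper insists on a strong ($H$-valued) solution at the $\ep$-level and therefore needs a string of higher-order $\lambda$-uniform estimates (on $(\varphi_\lambda)_{tt}$, on $J_\lambda\mu_\lambda$ in $V$, on $w_\lambda$ in $V$) that your sketch omits; some of these are not merely cosmetic, since the later estimate of $w_\ep$ in $L^\infty(0,T;H)\cap L^\infty(0,T;V)$ is obtained by testing the third equation by $w_\ep$ itself, which presupposes $w_\ep(t)\in V$.

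The one step where your argument would actually run into trouble is the identification $\xi=\beta(\varphi)$ in the limit $\ep\searrow 0$. You propose Minty's trick based on $\limsup_{\ep}\int_0^T\!\!\int_\Omega\beta_\ep(\varphi_\ep)\varphi_\ep\le\int_0^T\!\!\int_\Omega\xi\varphi$, ``obtained by testing the equations with $\varphi_\ep$''. Testing the third equation by $\varphi_\ep$ produces the term $\int_0^T\langle\mu_\ep,\varphi_\ep\rangle_{V^*,V}$, and the available bounds give only $\mu_\ep$ bounded in $L^\infty(0,T;V^*)$ (weak-$*$ convergent) and $\varphi_\ep$ weak-$*$ convergent in $L^\infty(0,T;V)$ (strongly only in $C^0([0,T];H)$); this weak--weak pairing does not pass to the limit, and integrating by parts in time merely shifts the problem to the pairing of $w_\ep$ with $(\varphi_\ep)_t$. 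Moreover, for $d=3$ and $q=6/5$ the product $\xi\varphi$ sits exactly at the edge of integrability, so the required convergence of the duality products is delicate even in principle. The fix is the one the paper uses: since (C1) makes $\beta$ single-valued and continuous and $J^\beta_{\ep}(\varphi_\ep)\to\varphi$ a.e.\ (up to a subsequence, from the strong convergence of $\varphi_\ep$), one has $\beta_\ep(\varphi_\ep)=\beta(J^\beta_\ep(\varphi_\ep))\to\beta(\varphi)$ a.e.\ in $\Omega\times(0,T)$, and the uniform $L^\infty(0,T;L^q)$ bound together with the Severini--Egorov theorem upgrades this to weak convergence in $L^q(\Omega\times(0,T))$, identifying $\xi$ without any monotonicity argument. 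With that substitution your proof goes through.
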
}%
\sk{\noindent 
Moreover, the \pcol{next statement is concerned with} weak solutions to \ref{Ptau}: 
\begin{thm}\label{maintheorem2}
Assume that {\rm (C1)-(C3)} hold. 
Then \sk{for all $\tau \in (0, 1]$} 
there exists a weak solution of the problem {\rm \ref{Ptau}}.
\end{thm}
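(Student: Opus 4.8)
The plan is to obtain a weak solution of \ref{Ptau} as the limit, along a sequence $\eta=\eta_k\searrow0$, of the weak solutions $(\theta_\eta,\mu_\eta,\varphi_\eta)$ of \ref{Ptaueta} furnished by Theorem~\ref{maintheorem1}. The whole point is that the a~priori estimates already entering the proof of Theorem~\ref{maintheorem1} can be arranged to be uniform with respect to $\eta\in(0,1]$ (here $\tau$ is fixed, so no uniformity in $\tau$ is required); once this is secured, a compactness and lower semicontinuity argument produces the limit, and the viscous term $\eta(\varphi_\eta)_t$ simply disappears in the process.

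The first step is therefore the basic energy estimate for \ref{Ptaueta}, made uniform in $\eta$. Following the scheme already used for Theorem~\ref{maintheorem1}, I would test \eqref{dfPtauetasol1} by $\theta_\eta$ and combine it with energy-type tests in \eqref{dfPtauetasol2} and \eqref{dfPtauetasol3} — performed through the operator $\mathcal{N}$ and the mean-value decomposition $v=(v-m(v))+m(v)$, so as to reconcile the inertial term $\tau\varphi_{tt}$ with the conservation structure and to make the coupling contributions $\pm\langle(\varphi_\eta)_t,\theta_\eta\rangle_{V^*,V}$ cancel — using the identities \eqref{dadefN}--\eqref{propN}, the Lipschitz bound (C2) on $\pi$, the data bounds (C3) on $f,\theta_0,\varphi_0,v_0$, Young's inequality and Gronwall's lemma. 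This yields a bound, independent of $\eta$, for
\[
\tfrac12\|\theta_\eta(t)\|_H^2+\tfrac{\tau}{2}\|(\varphi_\eta)_t(t)\|_{V^*}^2+\tfrac12\|\nabla\varphi_\eta(t)\|_H^2+\iO\widehat{\beta}(\varphi_\eta(t))\,dx+\iot\|\nabla\theta_\eta\|_H^2\,ds+\eta\iot\|(\varphi_\eta)_t\|_H^2\,ds .
\]
The decisive remark is that the only new term generated by the viscous contribution in \eqref{dfPtauetasol3}, namely $\eta\iot\|(\varphi_\eta)_t\|_H^2\,ds$, is nonnegative and already sits on the favourable side of the inequality; hence every constant is uniform in $\eta$, and in addition $\eta^{1/2}(\varphi_\eta)_t$ is bounded in $L^2(0,T;H)$, so that $\eta(\varphi_\eta)_t\to0$ strongly in $L^2(0,T;H)$ as $\eta\searrow0$. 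From these bounds one reads off the uniform estimates corresponding to \eqref{tausp1}--\eqref{tausp3}: the $L^\infty(0,T;V)$ bound for $w_\eta$ and the $L^\infty(0,T;H)\cap L^2(0,T;V)$ bound for $\theta_\eta$ are immediate, while comparison in \eqref{dfPtauetasol1} (resp.\ in \eqref{dfPtauetasol2}) gives the $L^2(0,T;V^*)$ bound for $(\theta_\eta)_t$ (resp.\ the $L^\infty(0,T;V^*)$ bound for $(\varphi_\eta)_t$); finally the uniform bound \eqref{tausp4} for $\beta(\varphi_\eta)$ in $L^\infty(0,T;L^q(\Omega))$ follows directly from the $L^\infty(0,T;L^1(\Omega))$ bound on $\widehat{\beta}(\varphi_\eta)$ and the growth condition in (C1), since $\int_\Omega|\beta(\varphi_\eta)|^q\,dx\le c_\beta\bigl(|\Omega|+\int_\Omega\widehat{\beta}(\varphi_\eta)\,dx\bigr)$.

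With these uniform bounds, the second step is extraction and passage to the limit. Along a subsequence, $\theta_\eta$, $w_\eta$, $\varphi_\eta$ converge weakly-$*$ in the spaces of \eqref{tausp1}--\eqref{tausp3}, $\beta(\varphi_\eta)\weak\zeta$ weakly-$*$ in $L^\infty(0,T;L^q(\Omega))$, and one sets $\mu:=w_t$. By the Aubin--Lions--Simon lemma (using $V\hookrightarrow\hookrightarrow H\hookrightarrow\hookrightarrow V^*$) one gets $\varphi_\eta\to\varphi$ strongly in $C([0,T];H)$, $\theta_\eta\to\theta$ strongly in $L^2(0,T;H)$ and in $C([0,T];V^*)$, and $w_\eta\to w$ in $C([0,T];V^*)$. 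In particular $\varphi_\eta\to\varphi$ a.e.\ in $Q$, whence $\pi(\varphi_\eta)\to\pi(\varphi)$ strongly (by (C2)) and $\beta(\varphi_\eta)\to\beta(\varphi)$ a.e.\ in $Q$; since a sequence bounded in $L^q(Q)$ with $q>1$ that converges a.e.\ converges weakly in $L^q(Q)$ to its a.e.\ limit, we identify $\zeta=\beta(\varphi)$. Passing to the limit in \eqref{dfPtauetasol1}--\eqref{dfPtauetasol3} is then routine — the term $\eta((\varphi_\eta)_t,v)_H$ tends to $0$ by the above — the time-integrated structure of \eqref{dfPtauetasol2} encodes the datum $v_0$, and the convergences in $C([0,T];V^*)$ and $C([0,T];H)$ give $\theta(0)=\theta_0$, $\varphi(0)=\varphi_0$ and $w(0)=0$, so that $(\theta,\mu,\varphi)$ is the desired weak solution of \ref{Ptau}.

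The main obstacle is not the limit passage, which is standard, but the bookkeeping in the first step: one must check that \emph{every} constant appearing in the energy and comparison estimates for \ref{Ptaueta} is genuinely independent of $\eta$ — which hinges on the favourable sign of the $\eta$-term and on a clean treatment, via $\mathcal{N}$ and the mean-value splitting, of the simultaneous presence of the inertial term and the conservation structure, coupled to the temperature equation. The only other delicate point, as usual for this kind of nonlinearity, is the identification of the weak limit of $\beta(\varphi_\eta)$, which is precisely why the strong convergence of $\varphi_\eta$ provided by Aubin--Lions is essential.
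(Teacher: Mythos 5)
Your proposal is correct and follows essentially the same route as the paper: Theorem~\ref{maintheorem2} is obtained by letting $\eta\searrow0$ in \ref{Ptaueta}, using uniform-in-$\eta$ bounds on $(\theta_\eta,\mu_\eta,\varphi_\eta)$, Aubin--Lions compactness, the favourable sign (and $O(\eta^{1/2})$ decay) of the viscous term, and the identification of the weak $L^q$ limit of $\beta(\varphi_\eta)$ via pointwise convergence. The one point where your bookkeeping should be adjusted is the derivation of the uniform energy estimate: you propose to test \eqref{dfPtauetasol1}--\eqref{dfPtauetasol3} directly, but the weak solution of \ref{Ptaueta} does not have enough regularity to justify those tests (there is no $(\varphi_\eta)_{tt}$, only the time-integrated equation \eqref{dfPtauetasol2}, and $\beta(\varphi_\eta)$ is merely in $L^\infty(0,T;L^q(\Omega))$, so the chain rule for $\widehat\beta$ and the pairing with $\mathcal N((\varphi_\eta)_t-m(v_0)e^{-t/\tau})$ are only formal). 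The paper avoids this by proving the estimates at the regularized level \ref{Ptauetaep} (Lemmas~\ref{estiep1}--\ref{estiep7}, which are already uniform in $\tau$, $\eta$ and $\ep$) and transferring them to $(\theta_\eta,\mu_\eta,\varphi_\eta)$ by weak and weak* lower semicontinuity of norms (Lemmas~\ref{estieta1}--\ref{estieta2}); with that substitution, everything else in your argument goes through as written.
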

}%

\sk{The proofs of Theorems~\ref{maintheorem1} \pcol{and}~\ref{maintheorem2} 
are organized as follows.} 
In Section~\ref{Sec3} we consider \pier{the first approximate problem~\ref{Ptauetaep} \pcol{in terms of the approximation parameter $\ep \in (0, 1]$ for the Yosida regularization of $\beta$. Next, 
the further approximate problem~\ref{Ptauetaeplam} is taken in 
Section~\ref{Sec4} for the parameter $\lambda \in (0, 1]$ intervening in the approximation of the operator $-\Delta$ in the space $H=L^2(\Omega)$. The existence {of solutions to}~\ref{Ptauetaeplam} is shown 
with full details in Section~\ref{Sec4}. Then,}}
in Section~\ref{Sec5} we deduce some \pier{uniform} estimates for \ref{Ptauetaeplam}\pcol{, of course independent of $\lambda$,}
and consequently pass to the limit as $\lambda$ tends to zero. 
In Section~\ref{Sec6} \pcol{we prove the key estimates}  and \pcol{consequently we can} show existence of weak solutions to~\sk{\ref{Ptaueta}} 
by taking the limit in \ref{Ptauetaep} as $\ep \searrow 0$. 
\pcol{Section~\ref{Sec7} is devoted to the asymptotic convergences, first establishing existence of weak solutions to \ref{Ptau}  by taking the limit in \ref{Ptaueta} as $\eta \searrow 0$. Then, for $\eta $
positive or null, we discuss the convergence of the problems
as the inertial coefficient $\tau$ tends to $0$.}

%\vfill\eject % ADDED TO START THE NEXT SESSION AT THE NEW PAGE 

%%==============================================================%%
%%==============                                  ==============%%
%%======                      Section3                    ======%%
%%====                                                      ====%%
%%==                                                          ==%%
%%====   Approximations  ====%%
%%======                                                  ======%%
%%==============                                  ==============%%
%%==============================================================%%

\section{\pier{Approximation in terms of $\ep$}}\label{Sec3}

\pcol{We first introduce 
\begin{align}
&\hbox{the Yosida approximation $\beta_{\ep}$ of $\beta$ on $\mathbb{R}$,}\label{pier1}
\end{align}
and consider the function $\hat{\beta}_{\ep} : \mathbb{R} \to \mathbb{R}$ defined by 
\[
\hat{\beta}_{\ep}(r):=
\displaystyle\inf_{s\in\mathbb{R}}\left\{\frac{1}{2\ep}|r-s|^2+\hat{\beta}(s) \right\} 
\quad \mbox{for}\ r\in \mathbb{R}.
\]
Note that $\hat{\beta}_{\ep}$ is termed as the Moreau--Yosida regularization of $\hat{\beta}$. 
We emphasize that $\hat{\beta}_{\ep}$ satisfies the identity
\[
\hat{\beta}_{\ep}(r)=\frac{1}{2\ep}|r-J_{\ep}^{\beta}(r)|^2 
+ \hat{\beta}(J_{\ep}^{\beta}(r)) \quad \hbox{for all $r\in\mathbb{R}$ and all $\ep \in (0,1]$,}
\]
where $J_{\ep}^{\beta}$ is the resolvent operator of $\beta$  
on $\mathbb{R}$.  
Moreover, it turns out that   
\begin{equation}
\beta_{\ep}(r) = \partial\hat{\beta}_{\ep}(r) = \frac{d}{dr}\hat{\beta}_{\ep}(r), \quad 
0\leq \hat{\beta}_{\ep}(r) \leq \hat{\beta}(r)
\label{rem2}
\end{equation}
for all $r\in\mathbb{R}$ and all $\ep \in (0,1]$ 
(see, e.g., \cite[Theorem 2.9, p.\ 48]{Barbu2010}).}

\pcol{Now, it is worth to point out that the property~\eqref{rem2} and the condition (C1) yield}
\begin{equation}
\pcol{|\beta_{\ep}(r)|^q \leq c_{\beta}(1 + \widehat{\beta}(J_{\ep}^{\beta}(r)))
                        \leq c_{\beta}(1 + \widehat{\beta}_{\ep}(r)) \quad
\hbox{for all $r\in\mathbb{R}$ and all $\ep \in (0, 1]$.}}
\label{rem3}
\end{equation}

\pier{Then, in order to prove existence of weak solutions to \ref{Ptaueta}, 
we consider the approximation
\begin{equation}\label{Ptauetaep}\tag*{(P)$_{\tau, \eta, \ep}$}
\begin{cases}
(\theta_{\ep})_t + (\varphi_{\ep})_t - \Delta\theta_{\ep} = f_{\ep}  
& \mbox{in}\ \Omega\times(0, T), \\[1mm]
\tau (\varphi_{\ep})_{tt} + (\varphi_{\ep})_{t} - \Delta\mu_{\ep} = 0 
& \mbox{in}\ \Omega\times(0, T), \\[1mm]
\sk{\eta(\varphi_{\ep})_{t}} - \Delta\varphi_{\ep} 
+ \beta_{\ep}(\varphi_{\ep}) + \pi(\varphi_{\ep}) = \mu_{\ep} + \theta_{\ep} 
& \mbox{in}\ \Omega\times(0, T), \\[1mm]
\partial_{\nu} \theta_{\ep} =  \partial_{\nu} \mu_{\ep} = \partial_{\nu} \varphi_{\ep} = 0
& \mbox{on}\ \partial\Omega\times(0, T), \\[1mm]
\theta_{\ep}(0) = \theta_{0\ep},\ \varphi_{\ep}(0) = \varphi_{0\ep},\ 
(\varphi_{\ep})_{t}(0) = v_{0\ep} 
& \mbox{in}\ \Omega, 
\end{cases}
\end{equation}
where $\ep \in (0, 1]$, 
\begin{align}
&\hbox{$f_{\ep} \in \pcol{C^0}([0, T]; H)$ for all $\ep \in (0, 1]$, 
$f_{\ep} \to f$ strongly in $L^2(0, T; V^*)$ as $\ep \searrow 0$,}\label{pier2}\\[2mm]
&\hbox{$\theta_{0\ep} \in W$,  
$\varphi_{0\ep} \in \pcol{W \cap H^3(\Omega)}$, $v_{0\ep} \in V$ are smooth approximations of $\theta_{0}, \varphi_{0} , v_{0} $}
\nonumber\\
&\hbox{strongly converging to $\theta_{0}, \varphi_{0} , v_{0} $ in the respective spaces $H, V, V^*$.}
\label{pier3}
\end{align}}%
\pcol{We also require that
\begin{align}
& 0\leq \int_\Omega \hat{\beta}_{\ep}(\varphi_{0\ep}) \leq C \quad \hbox{for all $\ep \in (0, 1]$},
\label{pier3.1}\\ 
&\hbox{\sk{$ m(\varphi_{0\ep}) = m(\varphi_{0})$, \ \ $m(v_{0\ep}) = m(v_{0})\, $ \quad
for all $\ep \in (0, 1]$.}}
\label{pier3.2}
\end{align}}%

\sk{\begin{remark}\label{sk}
\pcol{The reader may wonder whether some approximating families $\{f_{\ep}\}$ and  $\{(\theta_{0\ep}, \varphi_{0\ep}, v_{0\ep})\}$ satisfying \eqref{pier2}-\eqref{pier3.2} exist. Well, it turns out that
for all $\ep \in (0, 1]$ there is a unique $f_{\ep} \in \pcol{C^0([0, T]; H)}$ solving}
\[
\ep\langle (f_{\ep})_{t}, v \rangle_{V^{*}, V} 
+ \ep\int_{\Omega} \nabla f_{\ep} \cdot \nabla v 
+ (f_{\ep}, v)_{H} = \langle f, v \rangle_{V^{*}, V} 
\quad \mbox{a.e.\ on}\ (0, T) \quad \mbox{for all}\ v \in V.
\]
Also, for all $\ep \in (0, 1]$ 
there exists a unique triplet $(\theta_{0\ep}, \varphi_{0\ep}, v_{0\ep})$ 
such that  
\begin{align*}
&\theta_{0\ep} \in W,\ \varphi_{0\ep} \in W \cap H^3(\Omega),\ 
v_{0\ep} \in V, 
\\
&\theta_{0\ep} - \ep\Delta\theta_{0\ep} = \theta_{0}  \quad \pcol{\hbox{a.e. in } \Omega,}
\\
&\varphi_{0\ep} - \ep\Delta\varphi_{0\ep} = \varphi_{0}  \quad \pcol{\hbox{a.e. in } \Omega,}
\\
&(v_{0\ep}, v)_{H} 
+ \ep\int_{\Omega} \nabla v_{0\ep} \cdot \nabla v 
= \langle v_{0}, v \rangle_{V^{*}, V} \quad \mbox{for all}\ v \in V.
\end{align*}
\pcol{Then, it is straightforward to check that, in particular,  $\varphi_{0\ep}$ and $v_{0\ep}$
satisfy \eqref{pier3.1} and \eqref{pier3.2}. As for \eqref{pier3.1}, it is enough to test $\varphi_{0\ep} - \varphi_{0} - \ep\Delta\varphi_{0\ep} =0$ by $\beta_\ep (\varphi_{0\ep})$ and use the property of subdifferential along with the monotonicity of  $\beta_\ep$ to deduce that $\int_\Omega \hat{\beta}_{\ep}(\varphi_{0\ep})\leq\int_\Omega \hat{\beta}_{\ep}(\varphi_{0})$, whence one can conclude by \eqref{rem2} and (C3).}
\end{remark}}

\pcol{\begin{remark}\label{pc}
The condition~\eqref{pier3.2}, while highly practical for subsequent computations, is not essential for obtaining our results. What truly matters in the following analysis is that  $ m(\varphi_{0\ep}) $, $m(v_{0\ep}) $ remain bounded independently 
of $\ep$.
\end{remark}}

%\bigskip%

\pier{We also introduce the maximal monotone operator
\begin{align} 
&- \Delta_{N} \ \hbox{ from its domain $D( - \Delta_{N}) = W $ to $H$,}
 \nonumber\\
&%\qquad
\ \hbox{with \ $ - \Delta_{N} v = -\Delta v$  \ for all $v\in W$.}
\label{pier4} 
\end{align}
Note that 
\begin{align}
	\text{$(- \Delta_N u ,v)_{H}= \int_\Omega \nabla u \cdot \nabla v $ 
	\quad 
	for every $u\in W$ and $v\in V$.} \label{pier5}
\end{align}
Now,} we can define the weak solutions of \ref{Ptauetaep} as follows. 
%%%%%%%%%%%%%%%%%%%%%%DefP%%%%%%%%%%%%%%%%%%%%%%%%%%%
\begin{df}
A triplet $(\theta_{\ep}, \mu_{\ep}, \varphi_{\ep})$ with 
\begin{align}
&\theta_{\ep} \in H^1(0, T; H) \cap L^{\infty}(0, T; V) \cap L^2(0, T; W), \label{epsp1}\\
&\displaystyle \pier{t \mapsto  w_\ep (t):=\int_{0}^{t} \mu_\ep (s)\,ds \ \hbox{ is in }}\ \sk{W^{1, \infty}(0, T; V)} \cap L^2(0, T; W), \label{epsp2}\\
&\varphi_{\ep} \in W^{2, \infty}(0, T; \pier{V^{*}}) \cap W^{1, \infty}(0, T; V), \label{epsp3}\\
&\beta_{\ep}(\varphi_{\ep}) \in L^{\infty}(0, T; H \cap L^q(\Omega)) \label{epsp4}
\end{align}
is called a {\it weak solution} of \ref{Ptauetaep} if 
%$(\theta_{\ep}, \mu_{\ep}, \varphi_{\ep})$ satisfies 
\begin{align}
&(\theta_{\ep})_t + (\varphi_{\ep})_t - \Delta_{N}\theta_{\ep} = f_{\ep}  
       \quad \mbox{a.e.\ in}\ \Omega\times(0, T), \label{dfPtauetaepsol1}   
\\[2mm] 
&\sk{\tau\langle (\varphi_{\ep})_{tt}, v \rangle_{V^{*}, V} 
+ \langle (\varphi_{\ep})_{t}, v \rangle_{V^{*}, V} 
+ \int_{\Omega} \nabla\mu_{\ep} \cdot \nabla v 
= 0}
\notag \\ 
&\hspace{75mm}\sk{\mbox{a.e.\ on}\ (0, T) \quad \mbox{for all}\ v \in V,}  \label{dfPtauetaepsol2} 
\\[2mm] 
&\sk{\eta((\varphi_{\ep})_{t}, v)_{H}} 
  + \int_{\Omega} \nabla \varphi_{\ep} \cdot \nabla v 
  + (\beta_{\ep}(\varphi_{\ep}), v)_{H} 
\notag \\ 
&= %\langle (, v \rangle_{V^{*}, V}  + 
     \pcol{((w_{\ep})_{t} + \theta_{\ep} - \pi(\varphi_{\ep}), v)_{H}}
         \qquad \mbox{a.e.\ on}\ (0, T) \quad \mbox{for all}\ v \in V,  \label{dfPtauetaepsol3} 
\\[4mm] 
&\theta_{\ep}(0) = \theta_{0\ep},\ \varphi_{\ep}(0) = \varphi_{0\ep},\ \sk{(\varphi_{\ep})_{t}(0) = v_{0\ep}}
        \quad \mbox{a.e.\ in}\ \Omega. \label{dfPtauetaepsol4}
\end{align}
\end{df}

The following result is concerned with existence of weak solutions 
to~\ref{Ptauetaep}.
\begin{thm}\label{maintheorem3}
Assume that {\rm (C1)-(C3)} \pcol{and \eqref{pier1}, \eqref{pier2}, \eqref{pier3}} hold. 
Then \sk{for all $\tau, \eta, \ep \in (0, 1]$} 
there exists a weak solution of \pier{the problem}~{\rm \ref{Ptauetaep}}.
\end{thm}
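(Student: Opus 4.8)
The plan is to add a second layer of approximation, depending on a parameter $\lambda\in(0,1]$, in which the unbounded operator $-\Delta_N$ of \eqref{pier4} is replaced everywhere in \ref{Ptauetaep} by its Yosida regularization
\[
A_\lambda:=\tfrac1\lambda\bigl(I-(I+\lambda(-\Delta_N))^{-1}\bigr)=(-\Delta_N)J_\lambda,\qquad J_\lambda:=(I+\lambda(-\Delta_N))^{-1},
\]
which is a bounded, everywhere defined, symmetric, Lipschitz continuous and maximal monotone operator on $H$, with $\|u-J_\lambda u\|_H\le\lambda\|A_\lambda u\|_H$, $\|\nabla J_\lambda u\|_H\le\|\nabla u\|_H$, $(A_\lambda u,u)_H=\|\nabla J_\lambda u\|_H^2+\tfrac1\lambda\|u-J_\lambda u\|_H^2$, and $A_\lambda u\to-\Delta_N u$ in $H$ for every $u\in W$. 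This produces the problem \ref{Ptauetaeplam}, whose three equations now all make sense in $H$. Since its third equation yields $\mu_{\ep}$ algebraically, $\mu_{\ep}=\eta(\varphi_{\ep})_t+A_\lambda\varphi_{\ep}+\beta_{\ep}(\varphi_{\ep})+\pi(\varphi_{\ep})-\theta_{\ep}$, substituting into the second equation and rewriting it as a second-order equation for $\varphi_{\ep}$ turns \ref{Ptauetaeplam} into a Cauchy problem for the triplet $(\theta_{\ep},\varphi_{\ep},(\varphi_{\ep})_t)$ in $H\times H\times H$ with a globally Lipschitz continuous right-hand side (recall that $\beta_{\ep}$ is $1/\ep$-Lipschitz, $\pi$ is Lipschitz by (C2), and $A_\lambda,A_\lambda^2$ are bounded). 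The Cauchy--Lipschitz--Picard theorem then provides a unique solution on all of $[0,T]$; this is the step carried out in full in Section~\ref{Sec4}.

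The heart of the argument (Section~\ref{Sec5}) is a family of a priori estimates for \ref{Ptauetaeplam} that are uniform in $\lambda$ (with $\tau,\eta,\ep$ fixed). First, integrating the second equation over $\Omega$ gives $\tau m(\varphi_{\ep})''+m(\varphi_{\ep})'=0$, so $m(\varphi_{\ep})$ and $m((\varphi_{\ep})_t)$ are controlled explicitly by the data (here \eqref{pier3.2}, or Remark~\ref{pc}, is used). Splitting off the mean value and testing the zero-mean part of the second equation by $\mathcal N$ applied to $(\varphi_{\ep})_t$, in the spirit of \eqref{danormaVp}--\eqref{propN}, while simultaneously testing the first equation by $\theta_{\ep}$ and the third by $(\varphi_{\ep})_t$, one finds that the thermal coupling contributions $\pm((\varphi_{\ep})_t,\theta_{\ep})_H$ cancel, and a Gronwall argument (the right-hand side being absorbed via (C2) and $f_{\ep}\in L^2(0,T;V^*)$) controls, uniformly in $\lambda$, the $\lambda$-regularized version of
\[
\|\theta_{\ep}(t)\|_H^2+\tau\|(\varphi_{\ep})_t(t)\|_{V^*}^2+\|\nabla\varphi_{\ep}(t)\|_H^2+\int_\Omega\widehat\beta_{\ep}(\varphi_{\ep}(t)),
\]
together with the dissipation $\int_0^T\bigl(\|\nabla\theta_{\ep}\|_H^2+\eta\|(\varphi_{\ep})_t\|_H^2+\|\nabla\mu_{\ep}\|_H^2\bigr)$. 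This bounds $\theta_{\ep}$ in $L^\infty(0,T;H)$, $\varphi_{\ep}$ in $L^\infty(0,T;V)$ with $\widehat\beta_{\ep}(\varphi_{\ep})$ in $L^\infty(0,T;L^1(\Omega))$, $(\varphi_{\ep})_t$ in $L^\infty(0,T;V^*)\cap L^2(0,T;H)$, and $w_{\ep}(t)=\int_0^t\mu_{\ep}(s)\,ds$ in $L^\infty(0,T;V)$; a second estimate exploiting $\theta_{0\ep}\in W$ (testing the first equation by $(\theta_{\ep})_t$ and by $A_\lambda\theta_{\ep}$) upgrades $\theta_{\ep}$ to $L^\infty(0,T;V)\cap L^2(0,T;W)\cap H^1(0,T;H)$. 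Crucially, the growth condition in the form \eqref{rem3} then gives $\beta_{\ep}(\varphi_{\ep})$ bounded in $L^\infty(0,T;L^q(\Omega))$ at once, and elliptic regularity together with comparison in the remaining equations furnishes the rest of \eqref{epsp1}-\eqref{epsp4}.

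I would then pass to the limit $\lambda\searrow0$. By the uniform estimates, the Banach--Alaoglu theorem (weak and weak-$*$), and the Aubin--Lions--Simon lemma applied to $\varphi_{\ep}^\lambda\in W^{1,\infty}(0,T;V^*)\cap L^\infty(0,T;V)$ and to $\theta_{\ep}^\lambda$, a subsequence converges to some $(\theta_{\ep},\mu_{\ep},\varphi_{\ep})$ enjoying the regularity \eqref{epsp1}-\eqref{epsp4}, with $\theta_{\ep}^\lambda$ and $\varphi_{\ep}^\lambda$ converging strongly in $C^0([0,T];H)$. The strong convergence of $\varphi_{\ep}^\lambda$ and the Lipschitz continuity of $\beta_{\ep}$ and $\pi$ identify the limits of $\beta_{\ep}(\varphi_{\ep}^\lambda)$ and $\pi(\varphi_{\ep}^\lambda)$; since $J_\lambda\to I$ strongly on $H$ and $A_\lambda u\to-\Delta_N u$ for every $u\in W$, the terms involving $A_\lambda$ converge to the corresponding ones with $-\Delta_N$, the uniform $L^2(0,T;W)$ bounds serving to identify them. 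Passing to the limit in the three equations and in the initial conditions (the latter thanks to continuity in time), and recovering $\mu_{\ep}$ from the third equation, gives a weak solution of \ref{Ptauetaep}, which proves Theorem~\ref{maintheorem3}.

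The step I expect to be the main obstacle is the basic energy estimate. Because of the inertial term $\tau(\varphi_{\ep})_{tt}$, the usual conserved Cahn--Hilliard identity $((\varphi_{\ep})_t,\mu_{\ep})_H=-\|\nabla\mu_{\ep}\|_H^2$ is unavailable, so the estimate has to be built in $V^*$ by means of the operator $\mathcal N$ and the explicit dynamics of $m(\varphi_{\ep})$; moreover one must check that the $\lambda$-regularization spoils neither the cancellation of the thermal coupling terms nor the $L^1$-bound on $\widehat\beta_{\ep}(\varphi_{\ep})$ --- the bound that, through \eqref{rem3}, yields the decisive $L^\infty(0,T;L^q(\Omega))$ control of $\beta_{\ep}(\varphi_{\ep})$ --- so that every constant obtained is genuinely independent of $\lambda$.
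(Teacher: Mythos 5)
Your overall strategy coincides with the paper's: a second Yosida regularization of $-\Delta_N$, the Cauchy--Lipschitz--Picard theorem for the resulting Lipschitz ODE in $H\times H\times H$ (you use the state $(\theta,\varphi,\varphi_t)$ after eliminating $\mu$, the paper uses $(\theta,\varphi,w)$ with the time-integrated second equation --- both work), the mean-value ODE, the basic energy estimate built with $\mathcal N$ on the zero-mean part, the cancellation of the coupling terms, the growth condition \eqref{rem3} for the $L^q$ bound on $\beta_\ep(\varphi)$, and an Aubin--Lions passage to the limit. The $\lambda$-correction you flag as the main obstacle is indeed handled in the paper by re-substituting the second equation into the term $\lambda((-\Delta_N)_\lambda\mu_\lambda,\varphi_t-m)_H$, which produces terms of good sign; your concern there is well placed and resolvable.

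There is, however, one genuine gap. You claim that ``elliptic regularity together with comparison in the remaining equations furnishes the rest of \eqref{epsp1}--\eqref{epsp4}'', but the regularity \eqref{epsp2}--\eqref{epsp3} --- namely $\varphi_\ep\in W^{2,\infty}(0,T;V^*)\cap W^{1,\infty}(0,T;V)$ and $w_\ep\in W^{1,\infty}(0,T;V)$, i.e.\ $\mu_\ep\in L^\infty(0,T;V)$ --- is \emph{not} reachable by comparison from the first energy estimate. That estimate only gives $(\varphi_\lambda)_t$ in $L^\infty(0,T;V^*)\cap L^2(0,T;H)$ and $\mu_\lambda$ in $L^\infty(0,T;V^*)$; to read $\nabla\mu$ off the second equation you need $(\varphi_\lambda)_{tt}\in L^\infty(0,T;V^*)$, and to get $(\varphi_\lambda)_t\in L^\infty(0,T;V)$ you need a genuinely new estimate. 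The paper obtains these in Lemmas~\ref{estieplam8}--\ref{estieplam9} by differentiating the second and third equations in time and testing with $\mathcal N((\varphi_\lambda)_{tt}-m((\varphi_\lambda)_{tt}))$ and $J_\lambda((\varphi_\lambda)_{tt}+m(v_0)e^{-t/\tau}/\tau)$; this is precisely where the extra smoothness $\varphi_{0\ep}\in W\cap H^3(\Omega)$ and $v_{0\ep}\in V$ from \eqref{pier3} is consumed (through the bound on $(\varphi_\lambda)_{tt}(0)$ and $\mu_\lambda(0)$). Without this second-order estimate the limit object does not satisfy the definition of weak solution of \ref{Ptauetaep} --- in particular the term $\langle(\varphi_\ep)_{tt},v\rangle_{V^*,V}$ in \eqref{dfPtauetaepsol2} is not defined. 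A smaller inaccuracy: the dissipation produced by the $\mathcal N$-based testing is $\int_0^T\|(\varphi_\lambda)_t-m\|_*^2$, not $\int_0^T\|\nabla\mu_\lambda\|_H^2$; with the inertial term $\tau\varphi_{tt}$ present one cannot test the second equation by $\mu$ to harvest the usual $\|\nabla\mu\|^2$ term, and the $L^\infty(0,T;V)$ bound on $w_\lambda$ has to be extracted separately (as in Lemmas~\ref{estieplam6}--\ref{estieplam7}).
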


\vspace{10pt}

%%==============================================================%%
%%==============                                  ==============%%
%%======                      Section3                    ======%%
%%====                                                      ====%%
%%==                                                          ==%%
%%====   Existence of discrete solution           ====%%
%%======                                                  ======%%
%%==============                                  ==============%%
%%==============================================================%%

\section{\pier{Approximation in terms of $\lambda$}}\label{Sec4}

In this section we will discuss \pier{another approximate problem~\ref{Ptauetaeplam} based on a new approximation parameter $\lambda \in (0, 1]$. To this aim we introduce the Yosida regularization $(- \Delta_{N})_{\lambda}$ of the operator $(- \Delta_{N})$, which is defined by (cf., e.g., \cite{Barbu2010})
\begin{align}
& (- \Delta_{N})_{\lambda} := \frac 1 \lambda (I - J_\lambda)   , 
\label{pier6}
\end{align}
with $I$ identity operator in $H$ and $J_\lambda:= (I+\lambda (- \Delta_{N}))^{-1}$ resolvent operator. It turns out that, for every $u\in H$, the element $J_\lambda u$ belongs to $ D( - \Delta_{N}) \equiv W$ and solves
\begin{align}
&\hbox{$ J_\lambda u + \lambda (- \Delta_{N}) ( J_\lambda u)= u $ \quad in $H$,}
\label{pier7}
\end{align}
with the consequence that 
\begin{align}
&\hbox{$ \| J_\lambda u \|_H^2  + 2\lambda \|\nabla J_\lambda u\|_H^2 \leq  \|  u \|_H^2  $ \quad for every  $u\in H$.}
\label{pier7-1}
\end{align}
Indeed, it suffices to test \eqref{pier7} by $J_\lambda u $ and apply the Schwartz and Young inequalities. 
In addition, if we rewrite  \eqref{pier7} as 
\begin{align*}
&\hbox{$ (1-\lambda) J_\lambda u + \lambda (I - \Delta_{N}) ( J_\lambda u)= u  $ \quad in $H$,}
\end{align*}
and test by (cf.~\eqref{innerVstar}) $F^{-1} (J_\lambda u)$ we obtain 
\begin{align*}
&\hbox{$ (1-\lambda) \| J_\lambda u \|_{V^*} ^2  + \lambda \| J_\lambda u\|_H^2 \leq  \|  u \|_{V^*}
\| J_\lambda u \|_{V^*}
$,}
\end{align*}
whence
\begin{align}
&\hbox{$ \| J_\lambda u \|_{V^*}   \leq  2 \|  u \|_{V^*}
$ \quad for all  $u\in H$ and $\lambda \in (0,1/2)$.}
\label{pier7-2}
\end{align}
\sk{On the other hand, we can verify that 
\begin{align}
&\hbox{$ \| J_\lambda u \|_V^2 
+ 2\lambda (\|\nabla J_\lambda u\|_H^2 + \|\Delta_{N} J_\lambda u\|_{H}^2) 
\leq  \|  u \|_V^2  $ \quad for every  $u\in V$}
\label{sk7-1}
\end{align}
by multiplying \eqref{pier7} by $-\Delta_{N} J_\lambda u$ 
and by using \eqref{pier7-1}.}}%
\smallskip

\pier{\pcol{About the Yosida regularization} $(- \Delta_{N})_{\lambda}$ \pcol{in \eqref{pier6}} we also point out that
\begin{align}
&\hbox{$(- \Delta_{N})_{\lambda}$ is everywhere defined in $H$} \nonumber\\
&\hbox{and  $(- \Delta_{N})_{\lambda} u  = (- \Delta_{N}) ( J_\lambda u) $ for all $u\in H$ and $\lambda>0$.} \label{delta-n}
\end{align}}%

\pier{We can start to write the approximate problem~\ref{Ptauetaeplam} in terms of  $(\theta_{\lambda}, w_{\lambda}, \varphi_{\lambda})$ as 
a system of abstract differential equations in the space $H$. Here is the system, where the time derivative is still denoted by $(\,\cdot\,)_t$. We have}
\begin{equation}\label{Ptiltauetaeplam}\tag*{${\rm (\tilde{P})}_{\tau, \eta,  \ep, \lambda}$}
\begin{cases}
(\theta_{\lambda})_t + (\varphi_{\lambda})_t 
+ (- \Delta_{N})_{\lambda}\theta_{\lambda} = f_{\ep}  
& \pier{\mbox{in}\ [0, T]}, \\[1mm]
\pier{\tau} (\varphi_{\lambda})_{t} + \varphi_{\lambda}
+ (- \Delta_{N})_{\lambda} w_{\lambda} = \pier{\tau \, v_{0\ep} + \varphi_{0\ep}}
& \pier{\mbox{in}\ [0, T]}, \\[1mm]
\sk{\eta(\varphi_{\lambda})_{t}} + (- \Delta_{N})_{\lambda}\varphi_{\lambda} 
+ \beta_{\ep}(\varphi_{\lambda}) + \pi(\varphi_{\lambda}) 
= (w_{\lambda})_{t} + \theta_{\lambda} 
& \pier{\mbox{in}\ [0, T]}, \\[1mm]
\theta_{\lambda}(0) = \theta_0,\ \varphi_{\lambda}(0) = \varphi_{0\ep}, \ w_{\lambda}(0) = 0,
& %\mbox{in}\ \Omega, 
\end{cases}
\end{equation}
where \pier{$w_{\lambda}$ is related to $\mu_{\lambda}$ by
$w_{\lambda} (t) := \displaystyle \int_{0}^{t} \mu_{\lambda}(s)\,ds$, $t\in [0,T].$}
We define the function $L : H \times H \times H \to H \times H \times H$ by
\begin{align*}
L: 
&\left(
    \begin{array}{c}
      \theta \\
      \varphi \\
      w
    \end{array}
  \right) 
\\
&\mapsto 
\left(
    \begin{array}{c}
      - (- \Delta_{N})_{\lambda}\theta +
      \pier{\frac 1 \tau \bigl( \varphi + (- \Delta_{N})_{\lambda} w 
      - \pier{\tau \, v_{0\ep} - \varphi_{0\ep}}\bigr) }
      \\[3mm]
\pier{\frac 1 \tau \bigl( - \varphi - (- \Delta_{N})_{\lambda} w 
      + \pier{\tau \, v_{0\ep} + \varphi_{0\ep}}\bigr) }
      \\[3mm] 
  \pier{\frac \eta \tau \bigl( - \varphi - (- \Delta_{N})_{\lambda} w 
      + \pier{\tau \, v_{0\ep} + \varphi_{0\ep}}\bigr) }
      + (- \Delta_{N})_{\lambda}\varphi 
      + \beta_{\ep}(\varphi) + \pi(\varphi) - \theta 
    \end{array}
  \right)
\end{align*} 
\pier{and put}
\[
F := \left(
    \begin{array}{c}
      f_{\ep} \\
      0 \\
      0
    \end{array}
  \right) \in \pcol{C^0}([0, T]; H \times H \times H),\ \ 
U_{0} := \left(
    \begin{array}{c}
      \theta_{0\ep} \\
      \varphi_{0\ep} \\
      0
    \end{array}
  \right) \in H \times H \times H. 
\]
We can observe that 
$L : H \times H \times H \to H \times H \times H$ 
is Lipschitz continuous: \pier{indeed, this follows from
(C2) and the Lipschitz continuity of the Yosida approximations 
$(-\Delta_{N})_{\lambda}$ and $\beta_{\ep}$, 
with Lipschitz constants $1/\lambda$ and $1/\ep$, respectively.
Then,} we see from the Cauchy--Lipschitz--Picard theorem that 
there exists a unique solution $U \in C^1([0, T]; H \times H \times H)$ 
of the problem
\begin{equation*}
\begin{cases}
\displaystyle \frac{dU}{dt} = L(U) + F  &\mbox{in}\ [0, T], 
\\[2mm]
U(0) = U_{0}.
\end{cases}
\end{equation*}
Therefore, there exists a unique solution 
$(\theta_{\lambda}, \varphi_{\lambda}, w_{\lambda}) \in (C^1([0, T]; H))^3$ 
of \pier{the problem~\ref{Ptiltauetaeplam}. Now, it is not difficult to check that 
the corresponding triple $(\theta_{\lambda}, \mu_{\lambda}, \varphi_{\lambda}) $ possesses the regularity 
$ C^1([0, T]; H) \times \pcol{C^0}([0, T]; H) \times C^2([0, T]; H)$:  in fact, by using the second equation in~\ref{Ptiltauetaeplam}
one recovers the $C^2$ regularity of $\varphi_{\lambda}$. In addition,
this triple $(\theta_{\lambda}, \mu_{\lambda}, \varphi_{\lambda}) $ solves 
the following evolution problem with equations in $H$:}
\begin{equation}
\label{Ptauetaeplam}\tag*{(P)$_{\tau, \eta, \ep, \lambda}$}
\begin{cases}
(\theta_{\lambda})_t + (\varphi_{\lambda})_t 
+ (- \Delta_{N})_{\lambda}\theta_{\lambda} = f_{\ep}  
& \pier{\mbox{in}\ [0, T]}, \\[1mm]
\pier{\tau}(\varphi_{\lambda})_{tt} + (\varphi_{\lambda})_{t} 
+ (- \Delta_{N})_{\lambda}\mu_{\lambda} = 0 
&  \pier{\mbox{in}\ [0, T]},  \\[1mm]
\sk{\eta(\varphi_{\lambda})_{t}} + (- \Delta_{N})_{\lambda}\varphi_{\lambda} 
+ \beta_{\ep}(\varphi_{\lambda}) + \pi(\varphi_{\lambda}) 
= \mu_{\lambda} + \theta_{\lambda} 
& \pier{\mbox{in}\ [0, T]}, \\[1mm]
\theta_{\lambda}(0) = \theta_{0\ep},\ \varphi_{\lambda}(0) = \varphi_{0\ep},\ 
(\varphi_{\lambda})_{t}(0) = v_{0\ep} .
&% \mbox{in}\ \Omega. 
\end{cases}
\end{equation}

%
%
%\begin{remark}\label{rem4}
Note that, integrating the second equation in \ref{Ptauetaeplam} over $\Omega$ 
\pier{and using the property~\eqref{delta-n}, we find out that}
\begin{equation}
\pier{\tau}\,  m((\varphi_{\lambda})_{tt}(t)) + m((\varphi_{\lambda})_{t}(t)) = 0,
\label{mean-ode}
\end{equation}
\pier{where the mean value $m(\,\cdot\,)$ is defined in \eqref{pier8}.
Then it follows from the initial conditions that 
\begin{equation}
\tau (m(\varphi_{\lambda}))_{t}(t) + m(\varphi_{\lambda}(t)) 
= \tau\, m(v_{0\ep})  +  m(\varphi_{0\ep}) 
= \sk{\tau \,m(v_{0})  +  m(\varphi_{0}) =: m_0}
\label{pier9}
\end{equation}
whence we can obtain
\begin{equation}
m(\varphi_{\lambda}(t)) = \sk{m(\varphi_{0})}e^{-t/\tau } + \sk{m_{0}}(1 - e^{-t/\tau})
\label{pier10}
\end{equation}
and 
\begin{equation}
m((\varphi_{\lambda})_{t}(t)) =  \sk{m(v_{0})}e^{-t/\tau} \quad \hbox{for every } \, t\in [0,T].
\label{pier11}
\end{equation}}%
%\end{remark}
\pier{The next sections contain the derivation of estimates and the procedures for the passages to the limits.
About the constants used for estimates, we point out that in the sequel $\,C\,$ denotes any 
positive constant depending only on data and, in particular,  independent of $\lambda, \, \ep, \, \tau$. 
The value of such generic constants $\,C\,$ may 
change from formula to formula or even within the lines of the same formula. 
Finally, the notation $C_\delta$ denotes a positive constant that additionally depends on the quantity $\delta$.} 

\vspace{10pt}

%%==============================================================%%
%%==============                                  ==============%%
%%======                      Section5                    ======%%
%%====                                                      ====%%
%%==                                                          ==%%
%%====                                                      ====%%
%%======                                                  ======%%
%%==============                                  ==============%%
%%==============================================================%%

\section{Estimates for \ref{Ptauetaeplam} 
and passage to the limit as $\lambda\searrow0$} 
\label{Sec5}

In this section we will establish a priori estimates for \ref{Ptauetaeplam} 
and will prove Theorem~\ref{maintheorem3} 
by passing to the limit in \ref{Ptauetaeplam} as $\lambda \searrow 0$.
\begin{lem}\label{estieplam1}
\sk{For all $\tau, \eta, \ep \in (0, 1]$} 
there exists a constant \sk{$C_{\tau, \eta, \ep} > 0$} such that
\begin{align*}
&\|\varphi_{\lambda}\|_{L^{\infty}(0, T; H)} 
+ \|\theta_{\lambda}\|_{L^{\infty}(0, T; H)} 
\pier{{}+ \|J_{\lambda}\theta_{\lambda}\|_{L^{2}(0, T; V)} 
+ \lambda^{1/2}\|(-\Delta_{N})_{\lambda}\theta_{\lambda}\|_{L^{2}(0, T; H)}}
\\
&
\pier{{}+ \sk{\|(\varphi_{\lambda})_{t}\|_{L^{\infty}(0, T; V^{*})}} 
+ \|(\varphi_{\lambda})_{t}\|_{L^2(0, T; H)}}
 \\
&+ \|J_{\lambda}\varphi_{\lambda}\|_{L^{\infty}(0, T; V)} 
+ \lambda^{1/2}\|(-\Delta_{N})_{\lambda}\varphi_{\lambda}\|_{L^{\infty}(0, T; H)} 
+ \|\widehat{\beta}_{\ep}(\varphi_{\lambda})\|_{L^{\infty}(0, T; L^1(\Omega))}
\leq \sk{C_{\tau, \eta, \ep}}
\end{align*}
for all $\lambda \in (0, 1]$.
\end{lem}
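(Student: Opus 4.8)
The plan is to derive the stated bound by testing the three equations of \ref{Ptauetaeplam} with suitable multipliers and combining the resulting identities into an energy inequality, after which a Gronwall argument closes the estimate. I would work with the triple $(\theta_\lambda,\mu_\lambda,\varphi_\lambda)\in(C^1([0,T];H))^3$ already produced by the Cauchy--Lipschitz--Picard argument; all manipulations below are then legitimate pointwise-in-time computations in $H$. Since $(-\Delta_N)_\lambda$ is linear, nonnegative and self-adjoint on $H$ with $(-\Delta_N)_\lambda u=(-\Delta_N)(J_\lambda u)$, the key algebraic facts I will use repeatedly are $((-\Delta_N)_\lambda u,u)_H=\|\nabla J_\lambda u\|_H^2+\lambda\|(-\Delta_N)_\lambda u\|_H^2\ge0$ and, writing $\mathcal N_\lambda$ for the inverse of $(-\Delta_N)_\lambda$ restricted to zero-mean elements, that $((-\Delta_N)_\lambda w_\lambda,(w_\lambda)_t)_H$ produces a perfect time derivative of $\frac12\|\nabla J_\lambda w_\lambda\|_H^2+\tfrac\lambda2\|(-\Delta_N)_\lambda w_\lambda\|_H^2$. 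The mean-value identities \eqref{pier10}--\eqref{pier11}, which are already at our disposal, take care of the non-zero-mean parts and show in particular that $m(\varphi_\lambda)$, $m((\varphi_\lambda)_t)$ are bounded by data uniformly in $\lambda$ (and even in $\tau$).

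The core computation is as follows. First test the $\theta$-equation \eqref{dfPtauetaepsol1} (in its $\lambda$-form) by $\theta_\lambda$: this yields $\frac12\frac{d}{dt}\|\theta_\lambda\|_H^2+\|\nabla J_\lambda\theta_\lambda\|_H^2+\lambda\|(-\Delta_N)_\lambda\theta_\lambda\|_H^2=(f_\ep-(\varphi_\lambda)_t,\theta_\lambda)_H$. Next test the third equation by $(\varphi_\lambda)_t$ to obtain $\eta\|(\varphi_\lambda)_t\|_H^2+\frac12\frac{d}{dt}\big(\|\nabla J_\lambda\varphi_\lambda\|_H^2+\lambda\|(-\Delta_N)_\lambda\varphi_\lambda\|_H^2\big)+\frac{d}{dt}\int_\Omega\widehat\beta_\ep(\varphi_\lambda)=((w_\lambda)_t+\theta_\lambda-\pi(\varphi_\lambda),(\varphi_\lambda)_t)_H$, using \eqref{rem2} and the chain rule for the $C^1$ convex function $\widehat\beta_\ep$. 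For the second equation, write it as $\tau(\varphi_\lambda)_{tt}+(\varphi_\lambda)_t+(-\Delta_N)_\lambda\mu_\lambda=0$ and recall $\mu_\lambda=(w_\lambda)_t$; the natural multiplier is $\mathcal N_\lambda\big((\varphi_\lambda)_t-m((\varphi_\lambda)_t)\big)$ together with the mean-value ODE, which converts the inertial term into $\frac\tau2\frac{d}{dt}\|(\varphi_\lambda)_t\|_{*,\lambda}^2$ (the norm associated with $(-\Delta_N)_\lambda$) and produces the dissipation $\|(\varphi_\lambda)_t\|_{*,\lambda}^2$, while the $(-\Delta_N)_\lambda\mu_\lambda$ term cancels against the $(w_\lambda)_t$ appearing on the right-hand side of the third identity. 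Adding the three identities, the troublesome couplings $\pm((\varphi_\lambda)_t,\theta_\lambda)_H$ and $\pm((w_\lambda)_t,(\varphi_\lambda)_t)_H$ cancel by construction; what remains on the right is $(f_\ep,\theta_\lambda)_H$, $-(\pi(\varphi_\lambda),(\varphi_\lambda)_t)_H$ and lower-order boundary-of-time terms from $\nabla w_\lambda$. The term with $\pi$ is handled by (C2): $|(\pi(\varphi_\lambda),(\varphi_\lambda)_t)_H|\le\delta\|(\varphi_\lambda)_t\|_H^2+C_\delta(1+\|\varphi_\lambda\|_H^2)$, absorbing $\delta\|(\varphi_\lambda)_t\|_H^2$ into $\eta\|(\varphi_\lambda)_t\|_H^2$ (here $\delta$ is chosen depending on $\eta$, which is why the constant is $C_{\tau,\eta,\ep}$); the $f_\ep$ term is treated by Young's inequality against $\|\theta_\lambda\|_H^2$ and uses $\|f_\ep\|_{L^2(0,T;V^*)}\le C$, after controlling $\langle f_\ep,\theta_\lambda\rangle$ by $\|f_\ep\|_{V^*}\|\theta_\lambda\|_V$ and splitting $\|\theta_\lambda\|_V^2\le C(\|\nabla J_\lambda\theta_\lambda\|_H^2+\|\theta_\lambda\|_H^2)$ via \eqref{pier7-2}/\eqref{sk7-1}-type estimates so that the gradient part is partly absorbed. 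Integrating in time and invoking Gronwall's lemma gives the $L^\infty(0,T;H)$ bounds on $\theta_\lambda$ and $\varphi_\lambda$, the $L^\infty$ bound on $\|\nabla J_\lambda\varphi_\lambda\|_H$ and $\lambda^{1/2}\|(-\Delta_N)_\lambda\varphi_\lambda\|_H$, the $L^1$ bound on $\widehat\beta_\ep(\varphi_\lambda)$, the $L^2$-in-time bounds on $\|\nabla J_\lambda\theta_\lambda\|_H$ and $\lambda^{1/2}\|(-\Delta_N)_\lambda\theta_\lambda\|_H$, and both the $L^2(0,T;H)$ and $L^\infty(0,T;V^*)$ bounds on $(\varphi_\lambda)_t$ (the latter from $\|(\varphi_\lambda)_t\|_{*,\lambda}^2$ plus the bounded mean value, noting $\|\cdot\|_{*,\lambda}$ is comparable to $\|\cdot\|_{V^*}$ uniformly in $\lambda\in(0,1]$ on zero-mean elements by \eqref{pier7-2}). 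Finally, to upgrade $\|\nabla J_\lambda\varphi_\lambda\|_{L^\infty(0,T;H)}$ to the full $V$-norm $\|J_\lambda\varphi_\lambda\|_{L^\infty(0,T;V)}$ one just adds $\|J_\lambda\varphi_\lambda\|_H\le\|\varphi_\lambda\|_H$ from \eqref{pier7-1}.

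The main obstacle is the bookkeeping of the coupling terms: one must choose the multiplier for the second equation so that $(-\Delta_N)_\lambda\mu_\lambda$ tested against it reproduces exactly $-((w_\lambda)_t,(\varphi_\lambda)_t)_H$ with the right sign to cancel the term generated by testing the third equation with $(\varphi_\lambda)_t$, and simultaneously so that the inertial term yields a time derivative of a nonnegative quantity; handling the non-zero-mean components of $(\varphi_\lambda)_t$ and $w_\lambda$ separately (using \eqref{pier10}--\eqref{pier11} and that $m(w_\lambda)$ is an explicit bounded function) is the delicate part. A secondary point is that, unlike the genuinely uniform estimates derived later in Sections~\ref{Sec5}--\ref{Sec6}, here we are allowed constants depending on $\tau,\eta,\ep$, so the $\delta$-absorptions against $\eta\|(\varphi_\lambda)_t\|_H^2$ and against the $\tau$-weighted inertial term are permitted — this is what makes the present lemma comparatively soft and sets it apart from the harder estimates to come.
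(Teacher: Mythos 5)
Your overall strategy coincides with the paper's: test the $\theta$-equation by $\theta_\lambda$, the third equation by (the zero-mean part of) $(\varphi_\lambda)_t$, the second equation by an inverse-Laplacian multiplier applied to $(\varphi_\lambda)_t-m((\varphi_\lambda)_t)$, add the identities so the couplings cancel, and conclude by Gronwall. The one genuine variation is your multiplier $\mathcal N_\lambda\bigl((\varphi_\lambda)_t-m((\varphi_\lambda)_t)\bigr)$ with $\mathcal N_\lambda=((-\Delta_N)_\lambda)^{-1}$ on zero-mean elements, whereas the paper uses $\mathcal N$, the inverse of the true Neumann Laplacian. Your choice makes the $\mu_\lambda$-coupling cancel exactly, since $((-\Delta_N)_\lambda\mu_\lambda,\mathcal N_\lambda v)_H=(\mu_\lambda,v)_H$; the paper's choice produces $(J_\lambda\mu_\lambda,v)_H$ instead, and the mismatch with the $(\mu_\lambda,v)_H$ term coming from the third equation must be repaired by the identity \eqref{a5}, which reinjects the second equation and generates the extra (harmless) terms $-\tfrac{\lambda\tau}{2}\tfrac{d}{dt}\|(\varphi_\lambda)_t\|_H^2-\lambda\|(\varphi_\lambda)_t\|_H^2$. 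Your route avoids that correction; the price is that the inertial term is measured in the $\lambda$-dependent form $(v,\mathcal N_\lambda v)_H=\|v\|_*^2+\lambda\|v\|_H^2$, but this dominates $\|v\|_*^2$ uniformly in $\lambda$ and its initial value is controlled by $\|v_{0\ep}\|_*^2+\|v_{0\ep}\|_H^2$, so the $L^\infty(0,T;V^*)$ bound on $(\varphi_\lambda)_t$ still follows. Both versions work.

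Two points need repair. First, as written you test the third equation by $(\varphi_\lambda)_t$ while the second-equation multiplier carries only the zero-mean part; the cancellation then leaves the residue $m((\varphi_\lambda)_t)\int_\Omega\mu_\lambda=|\Omega|\,m(v_0)e^{-t/\tau}m(\mu_\lambda)$, and $m(\mu_\lambda)$ is not under control at this stage (bounding it requires $\|\beta_\ep(\varphi_\lambda)\|_{L^1(\Omega)}$, which is only obtained afterwards, in Lemmas~\ref{estieplam2} and~\ref{estieplam9}). The fix is the paper's: test the third equation by $(\varphi_\lambda)_t-m(v_0)e^{-t/\tau}$ as well, so that $\mu_\lambda$ is only ever paired with a zero-mean function; the additional terms this creates, namely $m(v_0)e^{-t/\tau}$ against $\beta_\ep(\varphi_\lambda)$, $\pi(\varphi_\lambda)$, $\theta_\lambda$ and $\eta(\varphi_\lambda)_t$, are controlled via \eqref{rem3} and Gronwall. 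Second, your treatment of the source term through $\|f_\ep\|_{V^*}\|\theta_\lambda\|_V$ together with the claimed inequality $\|\theta_\lambda\|_V^2\le C(\|\nabla J_\lambda\theta_\lambda\|_H^2+\|\theta_\lambda\|_H^2)$ does not work: $\theta_\lambda$ itself need not belong to $V$ (only $J_\lambda\theta_\lambda$ does), and no such bound holds. It is also unnecessary here, since by \eqref{pier2} one has $f_\ep\in C^0([0,T];H)$ and the constant may depend on $\ep$, so $(f_\ep,\theta_\lambda)_H\le\tfrac12\|f_\ep\|_H^2+\tfrac12\|\theta_\lambda\|_H^2$ suffices.
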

\begin{proof}
It holds that 
\sk{\begin{equation}\label{a1}
\frac{\eta}{2}\frac{d}{dt}\|\varphi_{\lambda}(t)\|_{H}^2 
= \eta((\varphi_{\lambda})_{t}(t), \varphi_{\lambda}(t))_{H}.
\end{equation}}%
\pier{We now test the first equation in \ref{Ptauetaeplam} by $\theta_{\lambda}$. Observing that  
\begin{align}
&((-\Delta_{N})_{\lambda}\theta_{\lambda}(t) , \theta_{\lambda}(t) )_{H} = 
((-\Delta_{N})_{\lambda}\theta_{\lambda}(t),
J_{\lambda}\theta_{\lambda}(t) + \lambda(-\Delta_{N})_{\lambda}\theta_{\lambda}(t) )_{H} 
\notag \\[3mm]
&= \|\nabla J_{\lambda}\theta_{\lambda}(t) \|_{H}^2 
    + \lambda\|(-\Delta_{N})_{\lambda}\theta_{\lambda}(t)\|_{H}^2, 
\label{a1-1}
\end{align}
where the resolvent operator $J_{\lambda}$ of $-\Delta_{N}$ is introduced in \eqref{pier6}-\eqref{delta-n},
it is easy to obtain}
\begin{align}\label{a2}
&\frac{1}{2}\frac{d}{dt}\|\theta_{\lambda}(t)\|_{H}^2 
\pier{{}+\|\nabla J_{\lambda}\theta_{\lambda}(t) \|_{H}^2 
    + \lambda\|(-\Delta_{N})_{\lambda}\theta_{\lambda}(t)\|_{H}^2 }\notag \\[1mm]
&= \pier{{}- ((\varphi_{\lambda})_{t}(t), \theta_{\lambda}(t))_{H}} 
+ (f_{\ep}(t), \pier{\theta_{\lambda}}(t))_{H}.
\end{align}
Multiplying \pier{the difference between the second equation in \ref{Ptauetaeplam} and \eqref{mean-ode}
by 
\[{\cal N}((\varphi_{\lambda})_{t}(t) - m((\varphi_{\lambda})_{t}(t))), \] 
thanks to~\eqref{propN}, \eqref{pier11} and~\eqref{delta-n} we see that 
\begin{align}\label{a3}
&\frac{\tau}{2}\frac{d}{dt}\|(\varphi_{\lambda})_{t}(t) \pier{{}- \sk{m(v_{0})}e^{-t/\tau}}\|_*^2 
+ \|(\varphi_{\lambda})_{t}(t)\pier{{}- \sk{m(v_{0})}e^{-t/\tau}}\|_*^2 \nonumber\\
&{}+ (J_{\lambda}\mu_{\lambda}(t), (\varphi_{\lambda})_{t}(t) - \sk{m(v_{0})}e^{-t/\tau})_H
= 0.
\end{align}
Next, we want to test the third equation in \ref{Ptauetaeplam} 
by 
\[(\varphi_{\lambda})_{t}(t) - m((\varphi_{\lambda})_{t}(t)) = (\varphi_{\lambda})_{t}(t) -  \sk{m(v_{0})}e^{-t/\tau}\] 
and argue similarly as in \eqref{a1-1} for the term with $(-\Delta_{N})_{\lambda}\varphi_{\lambda}(t)$.
Then, recalling \eqref{rem2} we deduce that}
\begin{align}\label{a4}
&\sk{\eta\|(\varphi_{\lambda})_{t}(t)\|_{H}^2} 
+ \frac{1}{2}\frac{d}{dt}
       (\|\nabla J_{\lambda}\varphi_{\lambda}(t) \|_{H}^2 
          + \lambda\|(-\Delta_{N})_{\lambda}\varphi_{\lambda}(t)\|_{H}^2)
+ \frac{d}{dt}\int_{\Omega}\widehat{\beta}_{\ep}(\varphi_{\lambda}(t))
\notag \\
&= \sk{\eta((\varphi_{\lambda})_{t}(t), \sk{m(v_{0})}e^{-t/\tau})_{H}} 
    + \int_{\Omega}\beta_{\ep}(\varphi_{\lambda}(t))\pier{\sk{m(v_{0})}e^{-t/\tau}}
\notag \\
&\,\quad - \pier{( \pi(\varphi_{\lambda}(t)),(\varphi_{\lambda})_{t}(t) - \pier{\sk{m(v_{0})}e^{-t/\tau}})_{H}} 
    + (\mu_{\lambda}(t), (\varphi_{\lambda})_{t}(t) - \pier{\sk{m(v_{0})}e^{-t/\tau}})_{H} 
\notag \\
&\,\quad + (\theta_{\lambda}(t), (\varphi_{\lambda})_{t}(t))_{H} 
    - (\theta_{\lambda}(t), \pier{\sk{m(v_{0})}e^{-t/\tau}})_{H}.
\end{align}
\pier{As for the term containing $\mu_{\lambda}(t)$ above,} the second equation in \ref{Ptauetaeplam} implies that 
\begin{align}\label{a5}
&(\mu_{\lambda}(t), (\varphi_{\lambda})_{t}(t) - \pier{\sk{m(v_{0})}e^{-t/\tau}})_{H} 
\notag \\[1mm]
&= (J_{\lambda}\mu_{\lambda}(t), (\varphi_{\lambda})_{t}(t) - \pier{\sk{m(v_{0})}e^{-t/\tau}})_{H} 
\notag \\
& \,\quad   + \lambda((-\Delta_{N})_{\lambda}\mu_{\lambda}(t), 
                                              (\varphi_{\lambda})_{t}(t) - \pier{\sk{m(v_{0})}e^{-t/\tau}})_{H} 
\notag \\
&= (J_{\lambda}\mu_{\lambda}(t), (\varphi_{\lambda})_{t}(t) - \pier{\sk{m(v_{0})}e^{-t/\tau}})_{H} 
\notag \\
& \,\quad       -\lambda(\pier \tau(\varphi_{\lambda})_{tt}(t) + (\varphi_{\lambda})_{t}(t), 
                                                   (\varphi_{\lambda})_{t}(t) - \pier{\sk{m(v_{0})}e^{-t/\tau}})_{H} 
\notag \\
&= (J_{\lambda}\mu_{\lambda}(t), (\varphi_{\lambda})_{t}(t) - \pier{\sk{m(v_{0})}e^{-t/\tau}})_{H} 
    - \frac{\lambda\pier\tau}{2}\frac{d}{dt}\|(\varphi_{\lambda})_{t}(t)\|_{H}^2 
    - \lambda\|(\varphi_{\lambda})_{t}(t)\|_{H}^2,
\end{align}
\pier{where the equality~\eqref{mean-ode} has been used. 
Now, we replace the related term in the right-hand side of \eqref{a4} with the last line of \eqref{a5},
then we add \eqref{a4} to \eqref{a3} obtaining
\begin{align}\label{a6}
&\frac{\tau}{2}\frac{d}{dt}\|(\varphi_{\lambda})_{t}(t) \pier{{}- \sk{m(v_{0})}e^{-t/\tau}}\|_*^2 
+ \|(\varphi_{\lambda})_{t}(t)\pier{{}- \sk{m(v_{0})}e^{-t/\tau}}\|_*^2 \notag \\
&+ \sk{\eta\|(\varphi_{\lambda})_{t}(t)\|_{H}^2} 
+ \frac{1}{2}\frac{d}{dt}
       (\|\nabla J_{\lambda}\varphi_{\lambda}(t) \|_{H}^2 
          + \lambda\|(-\Delta_{N})_{\lambda}\varphi_{\lambda}(t)\|_{H}^2)
\notag \\
&+ \frac{d}{dt}\int_{\Omega}\widehat{\beta}_{\ep}(\varphi_{\lambda}(t)) + \frac{\lambda\pier\tau}{2}\frac{d}{dt}\|(\varphi_{\lambda})_{t}(t)\|_{H}^2 
    +  \lambda\|(\varphi_{\lambda})_{t}(t)\|_{H}^2
\notag \\
&= \sk{\eta((\varphi_{\lambda})_{t}(t), \sk{m(v_{0})}e^{-t/\tau})_{H}}  
    + \int_{\Omega}\beta_{\ep}(\varphi_{\lambda}(t))\pier{\sk{m(v_{0})}e^{-t/\tau}}
\notag \\
&\,\quad - \pier{( \pi(\varphi_{\lambda}(t)),(\varphi_{\lambda})_{t}(t) - \pier{\sk{m(v_{0})}e^{-t/\tau}})_{H} }
\notag \\
&\,\quad+ (\theta_{\lambda}(t), (\varphi_{\lambda})_{t}(t))_{H} 
    - (\theta_{\lambda}(t), \pier{\sk{m(v_{0})}e^{-t/\tau}})_{H}.
\end{align}
At this point, we sum up \eqref{a1}, \eqref{a2}, \eqref{a6} and note that there is a 
cancellation of the two terms with the scalar product of $\theta_{\lambda}(t)$ and $(\varphi_{\lambda})_{t}(t)$.
We also point out that
\begin{equation}
|\sk{m(v_{0})}e^{-t/\tau}| \leq C \quad \hbox{for every }\, t\in [0,T].
\label{a7}
\end{equation}
All the terms in the right-hand side can be estimated by the Young inequality, taking into \pcol{account~\eqref{rem3}},
the Lipschitz continuity of $\pi$ and the property in \eqref{pier7-1}. Therefore, we can show Lemma \ref{estieplam1} 
by integrating with respect to $t \in [0, T]$, exploit the boundedness of initial data and applying the Gronwall lemma.}
\end{proof}

\begin{lem}\label{estieplam2}
For all \sk{$\tau, \eta, \ep \in (0, 1]$} 
there exists a constant \sk{$C_{\tau, \eta, \ep} > 0$} such that
\begin{align}
&\|\beta_{\ep}(\varphi_{\lambda})\|_{L^{\infty}(0, T; H \cap L^q(\Omega))} \leq \sk{C_{\tau, \eta, \ep}}, 
\label{estieplam2-1}
\\[1mm]
&\|(-\Delta_{N})_{\lambda}w_{\lambda}\|_{L^2(0, T; H)} \leq \sk{C_{\tau, \eta, \ep}} \label{estieplam2-2}
\end{align}
for all $\lambda \in (0, 1]$.
\end{lem}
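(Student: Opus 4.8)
The strategy is to upgrade the control already provided by Lemma~\ref{estieplam1}. From that lemma we know that $\varphi_\lambda$ is bounded in $L^\infty(0,T;H)$, that $(\varphi_\lambda)_t$ is bounded in $L^2(0,T;H)$ and in $L^\infty(0,T;V^*)$, that $J_\lambda\varphi_\lambda$ is bounded in $L^\infty(0,T;V)$, that $\theta_\lambda$ is bounded in $L^\infty(0,T;H)$, and that $\widehat\beta_\ep(\varphi_\lambda)$ is bounded in $L^\infty(0,T;L^1(\Omega))$. The first estimate \eqref{estieplam2-1} should be obtained by testing the third equation in \ref{Ptauetaeplam} by $\beta_\ep(\varphi_\lambda(t))$. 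The crucial point is that the leading term $((-\Delta_N)_\lambda\varphi_\lambda,\beta_\ep(\varphi_\lambda))_H$ is nonnegative: indeed $(-\Delta_N)_\lambda\varphi_\lambda = (-\Delta_N)(J_\lambda\varphi_\lambda)$ by \eqref{delta-n}, so this term equals $(\nabla J_\lambda\varphi_\lambda, \nabla(\beta_\ep\circ\varphi_\lambda))_H$ --- and since $\varphi_\lambda - J_\lambda\varphi_\lambda = \lambda(-\Delta_N)(J_\lambda\varphi_\lambda)$, a standard computation (testing $(-\Delta_N)(J_\lambda\varphi_\lambda)$ against $\beta_\ep(\varphi_\lambda)-\beta_\ep(J_\lambda\varphi_\lambda)$ and using monotonicity, then using that $\nabla\beta_\ep(J_\lambda\varphi_\lambda)=\beta_\ep'(J_\lambda\varphi_\lambda)\nabla J_\lambda\varphi_\lambda$ with $\beta_\ep'\ge 0$) shows it has a good sign up to terms already under control. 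Dropping this term, the remaining terms on the right, namely $\eta((\varphi_\lambda)_t,\beta_\ep(\varphi_\lambda))_H$, $(\pi(\varphi_\lambda),\beta_\ep(\varphi_\lambda))_H$, $((w_\lambda)_t,\beta_\ep(\varphi_\lambda))_H = (\mu_\lambda,\beta_\ep(\varphi_\lambda))_H$ and $(\theta_\lambda,\beta_\ep(\varphi_\lambda))_H$, must be absorbed. Here one uses Young's inequality, the bound $|\beta_\ep(r)|^q\le c_\beta(1+\widehat\beta_\ep(r))$ from \eqref{rem3}, and the previously established bounds; the term with $\mu_\lambda$ requires knowing $\mu_\lambda$ (equivalently $(w_\lambda)_t$) is controlled, which comes from rereading the third equation in \ref{Ptauetaeplam} as an identity expressing $\mu_\lambda$ in terms of the other (already bounded) quantities once $\|\beta_\ep(\varphi_\lambda)\|$ is absorbed --- so this step has to be organized as a single inequality in which $\|\beta_\ep(\varphi_\lambda)(t)\|_H^2$ and $\|\mu_\lambda(t)\|_H^2$ appear on both sides and are then absorbed. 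A cleaner route: test the third equation by $\beta_\ep(\varphi_\lambda)$ to control $\|\beta_\ep(\varphi_\lambda)\|_{L^\infty(0,T;H)}$ first in terms of $\|\mu_\lambda\|_{L^2(0,T;H)}$, then separately bootstrap $\|\mu_\lambda\|$.

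For \eqref{estieplam2-2}, the plan is to exploit the second equation in \ref{Ptauetaeplam}, rewritten via $\mu_\lambda=(w_\lambda)_t$ and \eqref{delta-n} as
\[
(-\Delta_N)_\lambda w_\lambda = (-\Delta_N)(J_\lambda w_\lambda) = -\tau(\varphi_\lambda)_{tt} - (\varphi_\lambda)_t.
\]
Hmm, this requires a bound on $(\varphi_\lambda)_{tt}$, which is not in Lemma~\ref{estieplam1}; so instead I would use the \emph{integrated} form, i.e. the equation \eqref{dfPtauetaepsol2} integrated in time, which reads $\tau(\varphi_\lambda)_t + \varphi_\lambda + (-\Delta_N)_\lambda w_\lambda = \tau v_{0\ep}+\varphi_{0\ep}$ (the second equation in \ref{Ptiltauetaeplam}). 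From this, $(-\Delta_N)_\lambda w_\lambda = \tau v_{0\ep}+\varphi_{0\ep} - \tau(\varphi_\lambda)_t - \varphi_\lambda$, and the right-hand side is bounded in $L^\infty(0,T;H)$ by Lemma~\ref{estieplam1} together with \eqref{pier3} (the data $v_{0\ep},\varphi_{0\ep}$ are in $V\subset H$, with norms that may depend on $\ep$ but not on $\lambda$). Wait --- $(\varphi_\lambda)_t$ is only bounded in $L^2(0,T;H)$, not $L^\infty$, which is exactly why the claimed bound is $L^2(0,T;H)$ and not $L^\infty$. So \eqref{estieplam2-2} follows immediately from that identity and the triangle inequality, with no real work.

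The genuine obstacle is \eqref{estieplam2-1}: getting a closed estimate when testing the third equation by $\beta_\ep(\varphi_\lambda)$ requires simultaneously controlling $\mu_\lambda$ and $\beta_\ep(\varphi_\lambda)$, since each appears in the equation for the other. The resolution is to decouple them: from the third equation, $\mu_\lambda = \eta(\varphi_\lambda)_t + (-\Delta_N)_\lambda\varphi_\lambda + \beta_\ep(\varphi_\lambda) + \pi(\varphi_\lambda) - \theta_\lambda$; one first tests by $\beta_\ep(\varphi_\lambda)$ to get, after absorbing the sign-definite leading term, an inequality of the form $\|\beta_\ep(\varphi_\lambda)(t)\|_H^2 \le \text{(controlled)} + C\|\mu_\lambda(t)\|_H\|\beta_\ep(\varphi_\lambda)(t)\|_H$, but $\mu_\lambda$ itself is expressed through $\beta_\ep(\varphi_\lambda)$ and the already-bounded quantities $\eta(\varphi_\lambda)_t, (-\Delta_N)_\lambda\varphi_\lambda = (-\Delta_N)J_\lambda\varphi_\lambda$ (bounded in $L^\infty(0,T;H)$ by $\lambda^{-1/2}$ times... no --- better, bounded in $L^\infty(0,T;H)$ only with a $\lambda$-dependent constant, which we must avoid). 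Actually the clean way: substitute the expression for $\mu_\lambda$ and cancel; the term $(\beta_\ep(\varphi_\lambda),\beta_\ep(\varphi_\lambda))_H$ then appears with a bad sign and must be moved left, leaving $\|\beta_\ep(\varphi_\lambda)(t)\|_H^2$ controlled by $\eta\|(\varphi_\lambda)_t(t)\|_H^2$, $\|(-\Delta_N)J_\lambda\varphi_\lambda(t)\|_H$ paired against... no, that reintroduces the bad term. The correct organization is: test the third equation by $(-\Delta_N)_\lambda\varphi_\lambda + \beta_\ep(\varphi_\lambda)$, i.e. recognize $\mu_\lambda+\theta_\lambda - \eta(\varphi_\lambda)_t - \pi(\varphi_\lambda) = (-\Delta_N)_\lambda\varphi_\lambda + \beta_\ep(\varphi_\lambda)$, square both sides and integrate: $\|(-\Delta_N)_\lambda\varphi_\lambda + \beta_\ep(\varphi_\lambda)\|_H^2 = \|(-\Delta_N)_\lambda\varphi_\lambda\|_H^2 + 2((-\Delta_N)_\lambda\varphi_\lambda,\beta_\ep(\varphi_\lambda))_H + \|\beta_\ep(\varphi_\lambda)\|_H^2$, and the cross term is nonnegative as discussed; thus $\|\beta_\ep(\varphi_\lambda)(t)\|_H^2 \le \|(\mu_\lambda+\theta_\lambda-\eta(\varphi_\lambda)_t-\pi(\varphi_\lambda))(t)\|_H^2$. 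This still needs $\|\mu_\lambda\|$. So ultimately one must first bound $\mu_\lambda$: integrate the third equation against the constant $1$ (or take mean values) to get $m(\mu_\lambda) + m(\theta_\lambda) = \eta m((\varphi_\lambda)_t) + m(\beta_\ep(\varphi_\lambda)) + m(\pi(\varphi_\lambda))$, controlling the mean of $\mu_\lambda$; combine with a $V$-estimate on $\mu_\lambda - m(\mu_\lambda)$ obtained by testing the second equation suitably. This bootstrap is the delicate bookkeeping; once $\|\mu_\lambda\|_{L^2(0,T;H)}\le C_{\tau,\eta,\ep}$ is in hand, \eqref{estieplam2-1} follows by the squaring argument above, and \eqref{estieplam2-2} is then immediate.
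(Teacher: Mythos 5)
Your treatment of \eqref{estieplam2-2} is exactly the paper's argument: rewrite the time-integrated second equation (the second equation in \ref{Ptiltauetaeplam}) as $(-\Delta_N)_\lambda w_\lambda = \tau v_{0\ep}+\varphi_{0\ep}-\tau(\varphi_\lambda)_t-\varphi_\lambda$ and read off the $L^2(0,T;H)$ bound from Lemma~\ref{estieplam1}. That part is fine.

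For \eqref{estieplam2-1}, however, your proposal has a genuine gap and also misses the point of the statement. You never close the argument: your plan hinges on first establishing $\|\mu_\lambda\|_{L^2(0,T;H)}\le C_{\tau,\eta,\ep}$ uniformly in $\lambda$, which you explicitly leave as ``delicate bookkeeping,'' and which is in fact not available at this stage (the paper only ever controls $\mu_\lambda$ in $L^\infty(0,T;V^*)$ and $J_\lambda\mu_\lambda$ in $L^\infty(0,T;V)$, and the latter comes several lemmas later, \emph{after} Lemma~\ref{estieplam2} has been used). The circular dependence between $\mu_\lambda$ and $\beta_\ep(\varphi_\lambda)$ that you are trying to break does not need to be broken at all, because the constant in the statement is allowed to depend on $\ep$. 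The Yosida approximation $\beta_\ep$ is Lipschitz continuous with constant $1/\ep$ and satisfies $\beta_\ep(0)=0$, so $|\beta_\ep(\varphi_\lambda)|\le \ep^{-1}|\varphi_\lambda|$ pointwise, and the bound $\|\beta_\ep(\varphi_\lambda)\|_{L^\infty(0,T;H)}\le \ep^{-1}\|\varphi_\lambda\|_{L^\infty(0,T;H)}\le C_{\tau,\eta,\ep}$ follows directly from Lemma~\ref{estieplam1}, with no testing of any equation. Likewise the $L^\infty(0,T;L^q(\Omega))$ bound is immediate from \eqref{rem3}, since $\int_\Omega|\beta_\ep(\varphi_\lambda(t))|^q\le c_\beta\bigl(|\Omega|+\int_\Omega\widehat\beta_\ep(\varphi_\lambda(t))\bigr)$ and the last integral is bounded uniformly by Lemma~\ref{estieplam1}. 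So the entire apparatus you build around the sign of $((-\Delta_N)_\lambda\varphi_\lambda,\beta_\ep(\varphi_\lambda))_H$ (which, incidentally, would become relevant for $\ep$-independent estimates, as in Section~\ref{Sec6}) is unnecessary here, and in the form you present it the proof of \eqref{estieplam2-1} is incomplete.
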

\begin{proof}
The inequality \eqref{estieplam2-1} \pier{follows 
from Lemma~\ref{estieplam1}, 
the Lipschitz continuity of $\beta_{\ep}$ \pcol{and \eqref{rem3}}. Another consequence of Lemma~\ref{estieplam1} is~\eqref{estieplam2-2}, which can be obtained
by a comparison of terms in} the second equation in \ref{Ptiltauetaeplam}. 
\end{proof}

\begin{lem}\label{estieplam3}
For all \sk{$\tau, \eta, \ep \in (0, 1]$} 
there exists a constant \sk{$C_{\tau, \eta, \ep} > 0$} such that
\begin{equation*}
\|\mu_{\lambda}\|_{\sk{L^{\infty}(0, T; V^*)}} (= \|(w_{\lambda})_{t}\|_{\sk{L^{\infty}(0, T; V^*)}}) 
\leq \sk{C_{\tau, \eta, \ep}}
\end{equation*}
for all $\lambda \in (0, 1]$.
\end{lem}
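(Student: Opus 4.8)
The plan is to isolate $\mu_\lambda$ from the third equation of \ref{Ptauetaeplam}, writing
\[
\mu_\lambda = \eta(\varphi_\lambda)_t + (-\Delta_N)_\lambda\varphi_\lambda + \beta_\ep(\varphi_\lambda) + \pi(\varphi_\lambda) - \theta_\lambda ,
\]
and then to bound each of the five terms on the right-hand side in $L^\infty(0,T;V^*)$, using only what is already available from Lemmas~\ref{estieplam1} and~\ref{estieplam2}. Since $w_\lambda(t)=\int_0^t\mu_\lambda(s)\,ds$ with $\mu_\lambda$ continuous, one has $(w_\lambda)_t=\mu_\lambda$, so the bound on $\mu_\lambda$ automatically yields the one on $(w_\lambda)_t$ recorded in parentheses in the statement.

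Four of the terms are immediate. By Lemma~\ref{estieplam1}, $(\varphi_\lambda)_t$ is bounded in $L^\infty(0,T;V^*)$ (so the factor $\eta\le1$ in front is harmless), $\theta_\lambda$ is bounded in $L^\infty(0,T;H)\hookrightarrow L^\infty(0,T;V^*)$, and $\varphi_\lambda$ is bounded in $L^\infty(0,T;H)$; the latter, together with the linear growth $|\pi(r)|\le|\pi(0)|+\mathrm{Lip}(\pi)\,|r|$ coming from (C2), makes $\pi(\varphi_\lambda)$ bounded in $L^\infty(0,T;H)\hookrightarrow L^\infty(0,T;V^*)$. By Lemma~\ref{estieplam2}, $\beta_\ep(\varphi_\lambda)$ is bounded in $L^\infty(0,T;H\cap L^q(\Omega))$, hence in $L^\infty(0,T;V^*)$ in view of the embedding $L^q(\Omega)\subset V^*$ noted in Remark~\ref{rem1}. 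All these bounds are of the form $C_{\tau,\eta,\ep}$ and, crucially, uniform in $\lambda$.

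The one term that is not directly controlled is $(-\Delta_N)_\lambda\varphi_\lambda$: Lemma~\ref{estieplam1} only gives $\lambda^{1/2}\|(-\Delta_N)_\lambda\varphi_\lambda\|_{L^\infty(0,T;H)}\le C_{\tau,\eta,\ep}$, which degenerates as $\lambda\searrow0$, so there is no uniform bound in $H$. The way around this — and the one point requiring care — is to pass to the weaker space $V^*$ via the resolvent: using $(-\Delta_N)_\lambda\varphi_\lambda=(-\Delta_N)(J_\lambda\varphi_\lambda)$ from~\eqref{delta-n}, the fact that $J_\lambda\varphi_\lambda\in W$, and~\eqref{pier5}, we get
\[
\langle(-\Delta_N)_\lambda\varphi_\lambda,v\rangle_{V^*,V}=\int_\Omega\nabla(J_\lambda\varphi_\lambda)\cdot\nabla v\qquad\text{for all }v\in V,
\]
whence $\|(-\Delta_N)_\lambda\varphi_\lambda\|_{V^*}\le\|\nabla J_\lambda\varphi_\lambda\|_H\le\|J_\lambda\varphi_\lambda\|_V$. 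Since Lemma~\ref{estieplam1} bounds $J_\lambda\varphi_\lambda$ in $L^\infty(0,T;V)$ uniformly in $\lambda$, this term too is bounded in $L^\infty(0,T;V^*)$ by a constant $C_{\tau,\eta,\ep}$. Summing the five estimates and taking the essential supremum over $t\in[0,T]$ then gives the assertion. I expect the main (indeed only) obstacle to be precisely this passage from the $H$-level, where the Yosida regularization $(-\Delta_N)_\lambda$ naturally lives, down to the $V^*$-level via $J_\lambda$ and~\eqref{pier5}.
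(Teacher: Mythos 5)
Your proof is correct and follows essentially the same route as the paper: a comparison of terms in the third equation of \ref{Ptauetaeplam}, with the only delicate term $(-\Delta_N)_\lambda\varphi_\lambda$ controlled in $V^*$ via the inequality $\|(-\Delta_N)_\lambda\varphi_\lambda(t)\|_{V^*}\leq \|J_\lambda\varphi_\lambda(t)\|_V$ and the uniform bound on $J_\lambda\varphi_\lambda$ in $L^\infty(0,T;V)$ from Lemma~\ref{estieplam1}. Nothing to add.
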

\begin{proof}
\sk{\pcol{One has to compare the terms of the third equation in~\ref{Ptauetaeplam}. Since (cf.~\eqref{delta-n})
\[
\| (- \Delta_{N})_{\lambda}(\varphi_{\lambda}(t)) \|_{V^*}
\leq \| J_{\lambda}(\varphi_{\lambda}(t)) \|_{V} \quad \hbox{for a.e. } t\in (0,T),
\]
the thesis is a consequence of the estimates in Lemma~\ref{estieplam1} and \eqref{estieplam2-1}, along with the Lipschitz continuity of $\pi$ (see (C2)).}}
\end{proof}

\begin{lem}\label{estieplam4}
For all \sk{$\tau, \eta, \ep \in (0, 1]$} 
there exists a constant \sk{$C_{\tau, \eta, \ep} > 0$} such that
\begin{align*}
\|J_{\lambda}(w_{\lambda})_{t}\|_{\sk{L^{\infty}(0, T; V^*)}} 
\leq \sk{C_{\tau, \eta, \ep}}
\end{align*}
for all $\lambda \in (0, 1/2)$.
\end{lem}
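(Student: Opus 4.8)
The plan is to read this bound off directly from Lemma~\ref{estieplam3} combined with the dual-norm contraction estimate~\eqref{pier7-2} for the resolvent $J_{\lambda}$ of $-\Delta_{N}$; no new energy inequality is needed.

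First I would recall that, by the definition $w_{\lambda}(t)=\int_{0}^{t}\mu_{\lambda}(s)\,ds$ with $\mu_{\lambda}\in C^{0}([0,T];H)$, one has $(w_{\lambda})_{t}=\mu_{\lambda}$, and hence $J_{\lambda}(w_{\lambda})_{t}=J_{\lambda}\mu_{\lambda}$. Since $\mu_{\lambda}(t)\in H$ for every $t\in[0,T]$, the estimate~\eqref{pier7-2} applies pointwise in time with $u=\mu_{\lambda}(t)$, giving $\|J_{\lambda}\mu_{\lambda}(t)\|_{V^{*}}\leq 2\|\mu_{\lambda}(t)\|_{V^{*}}$ for every $t\in[0,T]$ and every $\lambda\in(0,1/2)$. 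Taking the supremum over $t$ and invoking Lemma~\ref{estieplam3} then yields
\[
\|J_{\lambda}(w_{\lambda})_{t}\|_{L^{\infty}(0,T;V^{*})}=\|J_{\lambda}\mu_{\lambda}\|_{L^{\infty}(0,T;V^{*})}\leq 2\,\|\mu_{\lambda}\|_{L^{\infty}(0,T;V^{*})}\leq C_{\tau,\eta,\ep},
\]
which is exactly the assertion.

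There is essentially no obstacle in this step; the only point worth flagging is the restriction $\lambda\in(0,1/2)$, which is inherited from~\eqref{pier7-2}, whose derivation uses $1-\lambda>0$ together with a fixed gap below $1$ in order to produce the clean constant $2$. The purpose of this lemma is simply to provide one more $\lambda$-uniform bound, complementing those of Lemmas~\ref{estieplam1}--\ref{estieplam3}, that will be used when identifying the limits of the Yosida-regularized quantities in the passage $\lambda\searrow0$, where the chemical-potential term has to be recognized through its resolvent-filtered version $J_{\lambda}\mu_{\lambda}=(-\Delta_{N})^{-1}$-type smoothing rather than through $\mu_{\lambda}$ itself.
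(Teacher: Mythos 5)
Your argument is exactly the paper's: the authors also deduce the bound directly from Lemma~\ref{estieplam3} via the contraction estimate~\eqref{pier7-2}, merely stating it in one line where you spell out the pointwise-in-time application and the identification $J_{\lambda}(w_{\lambda})_{t}=J_{\lambda}\mu_{\lambda}$. The proof is correct and complete.
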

\begin{proof}
\pier{Owing to the inequality~\eqref{pier7-2}, 
it is clear that Lemma \ref{estieplam4} follows immediately from} Lemma \ref{estieplam3}.
\end{proof}

\begin{lem}\label{estieplam5}
For all \sk{$\tau, \eta, \ep \in (0, 1]$} 
there exists a constant \sk{$C_{\tau, \eta, \ep} > 0$} such that
\begin{align}
&(\|J_{\lambda}(\theta_{\lambda})_{t}\|_{L^2(0, T; H)} 
\leq)\ \|(\theta_{\lambda})_{t}\|_{L^2(0, T; H)} \leq \sk{C_{\tau, \eta, \ep}},\ \ 
\|J_{\lambda}\theta_{\lambda}\|_{L^{\infty}(0, T; V)} \leq \sk{C_{\tau, \eta,  \ep}}, 
\label{estieplam5-1}
\\[1mm]
&\|(-\Delta_{N})_{\lambda}\theta_{\lambda}\|_{L^2(0, T; H)} \leq \sk{C_{\tau, \eta, \ep}} \label{estieplam5-2}
\end{align}
for all $\lambda \in (0, 1]$.
\end{lem}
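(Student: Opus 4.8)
The plan is to derive the estimate~\eqref{estieplam5-1}--\eqref{estieplam5-2} by testing the first equation in \ref{Ptauetaeplam} by $(-\Delta_N)_\lambda \theta_\lambda$ and then by $(\theta_\lambda)_t$, and combining the outcomes. First I would test the first equation in \ref{Ptauetaeplam} by $(-\Delta_N)_\lambda \theta_\lambda$. Using the identity analogous to~\eqref{a1-1} -- namely that $((-\Delta_N)_\lambda z, (-\Delta_N)_\lambda z)_H = \|(-\Delta_N)_\lambda z\|_H^2$ together with $((-\Delta_N)_\lambda \theta_\lambda)_t = (-\Delta_N)_\lambda (\theta_\lambda)_t$ (linearity of the Yosida operator) and the monotonicity relation $((-\Delta_N)_\lambda (\theta_\lambda)_t, (-\Delta_N)_\lambda \theta_\lambda)_H = \frac12 \frac{d}{dt}\|(-\Delta_N)_\lambda \theta_\lambda(t)\|_H^2$ is \emph{not} quite what I want; instead I recall that $((-\Delta_N)_\lambda u, u)_H = \|\nabla J_\lambda u\|_H^2 + \lambda\|(-\Delta_N)_\lambda u\|_H^2 = (\nabla J_\lambda u, \nabla u)_H$ in the appropriate sense, so that testing by $(-\Delta_N)_\lambda \theta_\lambda$ produces the term $\frac12 \frac{d}{dt}\|\nabla J_\lambda \theta_\lambda(t)\|_H^2$ after using $J_\lambda (\theta_\lambda)_t = (J_\lambda \theta_\lambda)_t$ and $(-\Delta_N)_\lambda \theta_\lambda = -\Delta_N J_\lambda \theta_\lambda$. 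Concretely, the left-hand side gives $\frac12\frac{d}{dt}\|\nabla J_\lambda \theta_\lambda(t)\|_H^2 + \lambda\frac12\frac{d}{dt}\|(-\Delta_N)_\lambda\theta_\lambda(t)\|_H^2 + \|(-\Delta_N)_\lambda\theta_\lambda(t)\|_H^2$, while the right-hand side is $(f_\ep(t) - (\varphi_\lambda)_t(t), (-\Delta_N)_\lambda\theta_\lambda(t))_H$.

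The right-hand side terms are then controlled by Young's inequality: $|(f_\ep(t) - (\varphi_\lambda)_t(t), (-\Delta_N)_\lambda\theta_\lambda(t))_H| \le \frac12\|(-\Delta_N)_\lambda\theta_\lambda(t)\|_H^2 + \|f_\ep(t)\|_H^2 + \|(\varphi_\lambda)_t(t)\|_H^2$, absorbing the first summand into the left-hand side. Here I use that $(\varphi_\lambda)_t \in L^2(0,T;H)$ with the bound from Lemma~\ref{estieplam1}, and that $f_\ep \in C^0([0,T];H)$ so $\|f_\ep\|_{L^2(0,T;H)}$ is finite (depending on $\ep$, which is allowed since the constant is $C_{\tau,\eta,\ep}$). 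Integrating over $(0,T)$ and using the boundedness of $\nabla J_\lambda\theta_{0\ep}$ (which follows from $\theta_{0\ep}\in W$ and~\eqref{sk7-1}, since $\|J_\lambda\theta_{0\ep}\|_V \le \|\theta_{0\ep}\|_V$) yields the bounds $\|J_\lambda\theta_\lambda\|_{L^\infty(0,T;V)} \le C_{\tau,\eta,\ep}$ (combining the gradient bound with the $L^\infty(0,T;H)$ bound on $\theta_\lambda$ from Lemma~\ref{estieplam1}) and $\|(-\Delta_N)_\lambda\theta_\lambda\|_{L^2(0,T;H)} \le C_{\tau,\eta,\ep}$, which is~\eqref{estieplam5-2} and the second half of~\eqref{estieplam5-1}.

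For the remaining estimate $\|(\theta_\lambda)_t\|_{L^2(0,T;H)} \le C_{\tau,\eta,\ep}$, I would simply perform a comparison in the first equation of \ref{Ptauetaeplam}: $(\theta_\lambda)_t = f_\ep - (\varphi_\lambda)_t - (-\Delta_N)_\lambda\theta_\lambda$, so $\|(\theta_\lambda)_t\|_{L^2(0,T;H)} \le \|f_\ep\|_{L^2(0,T;H)} + \|(\varphi_\lambda)_t\|_{L^2(0,T;H)} + \|(-\Delta_N)_\lambda\theta_\lambda\|_{L^2(0,T;H)}$, all three of which are now bounded by $C_{\tau,\eta,\ep}$ (the first by~\eqref{pier2} and $f_\ep\in C^0([0,T];H)$, the second by Lemma~\ref{estieplam1}, the third by~\eqref{estieplam5-2} just proved). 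Finally $\|J_\lambda(\theta_\lambda)_t\|_{L^2(0,T;H)} \le \|(\theta_\lambda)_t\|_{L^2(0,T;H)}$ is immediate from the contractivity of $J_\lambda$ on $H$, which comes from~\eqref{pier7-1}. The main obstacle, if any, is the bookkeeping in the test by $(-\Delta_N)_\lambda\theta_\lambda$: one must be careful that all the products are legitimate (they are, since $\theta_\lambda \in C^1([0,T];H)$ and $(-\Delta_N)_\lambda$ is everywhere defined and Lipschitz on $H$) and that the chain-rule identities for $\frac{d}{dt}\|\nabla J_\lambda\theta_\lambda\|_H^2$ hold, which follows from the linearity and self-adjointness of $J_\lambda$ and $(-\Delta_N)_\lambda$. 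Otherwise the argument is a routine energy estimate paralleling the one in Lemma~\ref{estieplam1}.
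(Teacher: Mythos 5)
Your argument is correct, and it is the mirror image of the paper's: the paper multiplies the first equation of \ref{Ptauetaeplam} by $(\theta_{\lambda})_{t}$, which puts $\|(\theta_{\lambda})_{t}\|_{H}^2$ and $\frac{1}{2}\frac{d}{dt}\bigl(\|\nabla J_{\lambda}\theta_{\lambda}\|_{H}^2+\lambda\|(-\Delta_{N})_{\lambda}\theta_{\lambda}\|_{H}^2\bigr)$ on the left, yielding \eqref{estieplam5-1} directly and then \eqref{estieplam5-2} by comparison; you instead multiply by $(-\Delta_{N})_{\lambda}\theta_{\lambda}$, obtaining \eqref{estieplam5-2} and the $V$-bound directly and then the $(\theta_{\lambda})_{t}$ bound by comparison. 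Both routes hinge on the very same identity, namely that $((\theta_{\lambda})_{t},(-\Delta_{N})_{\lambda}\theta_{\lambda})_{H}=\frac{1}{2}\frac{d}{dt}\bigl(\|\nabla J_{\lambda}\theta_{\lambda}\|_{H}^2+\lambda\|(-\Delta_{N})_{\lambda}\theta_{\lambda}\|_{H}^2\bigr)$ (a consequence of \eqref{a1-1} and the linearity and self-adjointness of the Yosida approximation), and on the same inputs: $\|(\varphi_{\lambda})_{t}\|_{L^2(0,T;H)}\le C_{\tau,\eta,\ep}$ from Lemma~\ref{estieplam1}, $f_{\ep}\in C^0([0,T];H)$ from \eqref{pier2}, and the control of the initial energy via \eqref{sk7-1} (which, note, also bounds the piece $\lambda\|(-\Delta_{N})_{\lambda}\theta_{0\ep}\|_{H}^2$ that you did not mention explicitly, since $\|\Delta_{N}J_{\lambda}\theta_{0\ep}\|_{H}=\|(-\Delta_{N})_{\lambda}\theta_{0\ep}\|_{H}$). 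Neither multiplier buys anything over the other here because the constant is allowed to depend on $\ep$, so $\|f_\ep\|_{L^2(0,T;H)}$ is admissible either way; your version is a perfectly acceptable substitute for the paper's.
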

\begin{proof}
Multiplying the first equation in \ref{Ptauetaeplam} 
by $(\theta_{\lambda})_{t}(t)$ \pier{and arguing as in the proof of Lemma~\ref{estieplam1},   
we deduce that
\begin{align*}
&\|(\theta_{\lambda})_{t}(t)\|_{H}^2 
+ \frac{1}{2}\frac{d}{dt}
       (\|\nabla J_{\lambda}\theta_{\lambda}(t) \|_{H}^2 
          + \lambda\|(-\Delta_{N})_{\lambda}\theta_{\lambda}(t)\|_{H}^2)
\notag \\
&= (f_\ep (t) - (\varphi_{\lambda})_{t}(t), (\theta_{\lambda})_{t}(t))_H . 
\notag
\end{align*}
Then, by the Young inequality and the conclusion of Lemma~\ref{estieplam1}, integrating with respect to time 
yields \eqref{estieplam5-1}. 
Moreover, the inequality \eqref{estieplam5-2} comes from a subsequent comparison of terms in  
the first equation of \ref{Ptauetaeplam}.}
\end{proof}

\begin{lem}\label{estieplam6}
For all \sk{$\tau, \eta, \ep \in (0, 1]$} 
there exists a constant \sk{$C_{\tau, \eta, \ep} > 0$} such that
\begin{equation*}
(\|J_{\lambda}w_{\lambda}\|_{L^{\infty}(0, T; H)} \leq)\ 
\|w_{\lambda}\|_{L^{\infty}(0, T; H)} \leq \sk{C_{\tau, \eta, \ep}}
\end{equation*}
for all $\lambda \in (0, 1]$.
\end{lem}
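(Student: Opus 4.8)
The plan is to bound $w_{\lambda}$ directly by a Gronwall argument on $t\mapsto\|w_{\lambda}(t)\|_{H}^{2}$, rather than extracting it from a uniform $L^{\infty}(0,T;V)$-bound on $w_{\lambda}$ (which is only available after the present lemma). Since the Cauchy--Lipschitz--Picard solution satisfies $w_{\lambda}\in C^{1}([0,T];H)$, $\mu_{\lambda}\in C^{0}([0,T];H)$, with $w_{\lambda}(0)=0$ and $(w_{\lambda})_{t}=\mu_{\lambda}$, one has the classical identity
\begin{equation*}
\frac{1}{2}\frac{d}{dt}\|w_{\lambda}(t)\|_{H}^{2}=(\mu_{\lambda}(t),w_{\lambda}(t))_{H}\qquad\hbox{for all }t\in[0,T].
\end{equation*}
I would then replace $\mu_{\lambda}$ by the expression coming from the third equation in \ref{Ptauetaeplam}, namely
\begin{equation*}
\mu_{\lambda}=\eta(\varphi_{\lambda})_{t}+(-\Delta_{N})_{\lambda}\varphi_{\lambda}+\beta_{\ep}(\varphi_{\lambda})+\pi(\varphi_{\lambda})-\theta_{\lambda},
\end{equation*}
integrate over $[0,t]$, and estimate the resulting five terms.

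The only delicate term is $\int_{0}^{t}((-\Delta_{N})_{\lambda}\varphi_{\lambda},w_{\lambda})_{H}\,ds$, since no $V$-regularity of $w_{\lambda}$ is known and one cannot integrate by parts in space. The key point is that $(-\Delta_{N})_{\lambda}$ is a bounded self-adjoint operator on $H$ (it is the Yosida approximation of the self-adjoint $-\Delta_{N}$), so that $((-\Delta_{N})_{\lambda}\varphi_{\lambda},w_{\lambda})_{H}=(\varphi_{\lambda},(-\Delta_{N})_{\lambda}w_{\lambda})_{H}$; the (integrated) second equation of \ref{Ptiltauetaeplam} then gives $(-\Delta_{N})_{\lambda}w_{\lambda}=\tau v_{0\ep}+\varphi_{0\ep}-\tau(\varphi_{\lambda})_{t}-\varphi_{\lambda}$, whence
\begin{equation*}
\int_{0}^{t}((-\Delta_{N})_{\lambda}\varphi_{\lambda},w_{\lambda})_{H}\,ds=\int_{0}^{t}(\varphi_{\lambda},\tau v_{0\ep}+\varphi_{0\ep})_{H}\,ds-\frac{\tau}{2}\bigl(\|\varphi_{\lambda}(t)\|_{H}^{2}-\|\varphi_{0\ep}\|_{H}^{2}\bigr)-\int_{0}^{t}\|\varphi_{\lambda}\|_{H}^{2}\,ds.
\end{equation*}
All three contributions are bounded by $C_{\tau,\eta,\ep}$ thanks to the $L^{\infty}(0,T;H)$-bound on $\varphi_{\lambda}$ from Lemma~\ref{estieplam1} and the (fixed) data $v_{0\ep},\varphi_{0\ep}$, and, crucially, this term no longer involves $w_{\lambda}$.

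The four remaining terms $\int_{0}^{t}\eta((\varphi_{\lambda})_{s},w_{\lambda})_{H}\,ds$, $\int_{0}^{t}(\beta_{\ep}(\varphi_{\lambda}),w_{\lambda})_{H}\,ds$, $\int_{0}^{t}(\pi(\varphi_{\lambda}),w_{\lambda})_{H}\,ds$ and $-\int_{0}^{t}(\theta_{\lambda},w_{\lambda})_{H}\,ds$ I would bound by $C_{\tau,\eta,\ep}+\delta\int_{0}^{t}\|w_{\lambda}(s)\|_{H}^{2}\,ds$ via the Cauchy--Schwarz and Young inequalities, using $\|(\varphi_{\lambda})_{t}\|_{L^{2}(0,T;H)}$ and $\|\theta_{\lambda}\|_{L^{\infty}(0,T;H)}$ from Lemma~\ref{estieplam1}, $\|\beta_{\ep}(\varphi_{\lambda})\|_{L^{\infty}(0,T;H)}$ from Lemma~\ref{estieplam2}, and the Lipschitz continuity of $\pi$ together with the bound on $\varphi_{\lambda}$ to control $\|\pi(\varphi_{\lambda})\|_{L^{\infty}(0,T;H)}$. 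Choosing $\delta$ small, recalling $w_{\lambda}(0)=0$ and applying Gronwall's lemma then yields $\|w_{\lambda}\|_{L^{\infty}(0,T;H)}\le C_{\tau,\eta,\ep}$, while the parenthetical inequality $\|J_{\lambda}w_{\lambda}\|_{L^{\infty}(0,T;H)}\le\|w_{\lambda}\|_{L^{\infty}(0,T;H)}$ is immediate from the contraction estimate \eqref{pier7-1}. The only genuine obstacle is the $(-\Delta_{N})_{\lambda}\varphi_{\lambda}$ term above; once the self-adjointness of $(-\Delta_{N})_{\lambda}$ and the second equation are exploited, the rest is routine.
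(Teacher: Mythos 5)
Your proof is correct and follows essentially the same route as the paper: test the third equation of \ref{Ptauetaeplam} by $w_{\lambda}$, use the self-adjointness of $(-\Delta_{N})_{\lambda}$ to rewrite the troublesome term as $(\varphi_{\lambda},(-\Delta_{N})_{\lambda}w_{\lambda})_{H}$, and conclude with Young and Gronwall. The only (harmless) difference is that the paper controls that term directly via the bound \eqref{estieplam2-2} on $\|(-\Delta_{N})_{\lambda}w_{\lambda}\|_{L^2(0,T;H)}$ from Lemma~\ref{estieplam2}, whereas you re-derive the same control by substituting the integrated second equation of \ref{Ptiltauetaeplam} and integrating in time.
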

\begin{proof}
\pier{The estimate can be obtained by testing the third equation in \ref{Ptauetaeplam} by $w_{\lambda}$ and  
integrating over $(0, t)$, for $t \in [0, T]$. Then, most of terms can be easily treated with the help of 
Lemma~\ref{estieplam1} and \eqref{estieplam2-1}, while we point that the term
\[ \int_0^t ((-\Delta_{N})_{\lambda}\varphi_{\lambda}(s), w_\lambda (s))_H ds =
 \int_0^t (\varphi_{\lambda}(s),(-\Delta_{N})_{\lambda} w_\lambda (s))_H ds \]
 is under control, thanks to \eqref{estieplam2-2}. By the Gronwall lemma one ends the proof.}
 \end{proof}

\begin{lem}\label{estieplam7}
For all \sk{$\tau, \eta, \ep \in (0, 1]$} 
there exists a constant \sk{$C_{\tau, \eta, \ep} > 0$} such that
\begin{equation*}
\|J_{\lambda}w_{\lambda}\|_{L^{\infty}(0, T; V)} \leq \sk{C_{\tau, \eta, \ep}}
\end{equation*}
for all $\lambda \in (0, 1]$.
\end{lem}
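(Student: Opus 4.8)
The plan is to extract a pointwise‑in‑time control of $\nabla J_{\lambda}w_{\lambda}$ directly from the second equation of the reduced system~\ref{Ptiltauetaeplam}, combining it with the $H$‑bound on $w_{\lambda}$ already obtained in Lemma~\ref{estieplam6} and with the $L^{\infty}(0,T;V^{*})$‑bounds of Lemma~\ref{estieplam1}. No new energy identity is needed: the argument is purely algebraic plus an absorption step.

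First I would observe that, by \eqref{delta-n} and \eqref{pier5} applied with $u=v=J_{\lambda}w_{\lambda}\in W$,
\[
\|\nabla J_{\lambda}w_{\lambda}(t)\|_{H}^{2}
= \bigl((-\Delta_{N})_{\lambda}w_{\lambda}(t),\,J_{\lambda}w_{\lambda}(t)\bigr)_{H}
\quad\text{for a.e. }t\in(0,T).
\]
Then the second equation in \ref{Ptiltauetaeplam} rewrites $(-\Delta_{N})_{\lambda}w_{\lambda}$ as $\tau v_{0\ep}+\varphi_{0\ep}-\tau(\varphi_{\lambda})_{t}-\varphi_{\lambda}$, so that, treating the terms $v_{0\ep}$ and $(\varphi_{\lambda})_{t}$ through the $V^{*}$--$V$ duality pairing,
\[
\|\nabla J_{\lambda}w_{\lambda}(t)\|_{H}^{2}
= \bigl(\varphi_{0\ep}-\varphi_{\lambda}(t),\,J_{\lambda}w_{\lambda}(t)\bigr)_{H}
+ \tau\bigl\langle v_{0\ep}-(\varphi_{\lambda})_{t}(t),\,J_{\lambda}w_{\lambda}(t)\bigr\rangle_{V^{*},V}.
\]

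Next I would estimate the right-hand side using the uniform bounds already at our disposal: $\varphi_{0\ep}$ is bounded in $H$ and $v_{0\ep}$ in $V^{*}$ by \eqref{pier3}, $\varphi_{\lambda}$ is bounded in $L^{\infty}(0,T;H)$ and $(\varphi_{\lambda})_{t}$ in $L^{\infty}(0,T;V^{*})$ by Lemma~\ref{estieplam1}, and $\|J_{\lambda}w_{\lambda}\|_{H}\le\|w_{\lambda}\|_{H}$ is bounded in $L^{\infty}(0,T;H)$ by Lemma~\ref{estieplam6}. Hence the first summand is $\le C_{\tau,\eta,\ep}$ and the second is $\le C_{\tau,\eta,\ep}\,\|J_{\lambda}w_{\lambda}(t)\|_{V}$. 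Since $\|J_{\lambda}w_{\lambda}\|_{V}^{2}=\|\nabla J_{\lambda}w_{\lambda}\|_{H}^{2}+\|J_{\lambda}w_{\lambda}\|_{H}^{2}$, Lemma~\ref{estieplam6} again turns this into
\[
\|\nabla J_{\lambda}w_{\lambda}(t)\|_{H}^{2}
\le C_{\tau,\eta,\ep}+C_{\tau,\eta,\ep}\,\|\nabla J_{\lambda}w_{\lambda}(t)\|_{H}
\quad\text{for a.e. }t\in(0,T),
\]
a quadratic inequality that forces $\|\nabla J_{\lambda}w_{\lambda}(t)\|_{H}\le C_{\tau,\eta,\ep}$ uniformly in $\lambda$ and in $t$; together with Lemma~\ref{estieplam6} this is exactly the claimed $L^{\infty}(0,T;V)$‑bound.

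The only delicate point — really a matter of bookkeeping rather than a true obstacle — is that $(\varphi_{\lambda})_{t}$ and $v_{0\ep}$ are controlled (uniformly in $\lambda$, and up to $\ep$‑dependence) only in $V^{*}$ and not in $H$; this is why they must be paired with $J_{\lambda}w_{\lambda}$ through the duality bracket, which produces the factor $\|J_{\lambda}w_{\lambda}\|_{V}$ on the right-hand side and makes the absorption step unavoidable. Everything else is an immediate consequence of the estimates of Lemmas~\ref{estieplam1} and \ref{estieplam6}.
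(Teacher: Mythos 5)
Your proposal is correct and follows essentially the same route as the paper: both rewrite the second equation of \ref{Ptiltauetaeplam} as $(-\Delta_{N})_{\lambda}w_{\lambda}=\tau v_{0\ep}+\varphi_{0\ep}-\tau(\varphi_{\lambda})_{t}-\varphi_{\lambda}$, test with $J_{\lambda}w_{\lambda}$, and use the $L^{\infty}(0,T;V^{*})$ bound of Lemma~\ref{estieplam1} together with Lemma~\ref{estieplam6}. Your explicit quadratic-inequality absorption is just the Young-inequality step the paper invokes, spelled out.
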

\begin{proof}
\pier{From the second equation in \ref{Ptiltauetaeplam} we see that \[(- \Delta_{N})_{\lambda} w_{\lambda} = \tau \, 
v_{0\ep} + \varphi_{0\ep} - \tau (\varphi_{\lambda})_{t} - \varphi_{\lambda}, \] with the right-hand side 
bounded in $L^\infty (0,T; V^*)$ by \pcol{$C_{\tau, \eta, \ep}$} (cf. Lemma~\ref{estieplam1}). Then, multiplying 
by $J_{\lambda}w_{\lambda}$ and invoking Lemma~\ref{estieplam6}, 
by the Young inequality we plainly obtain Lemma~\ref{estieplam7}.}
\end{proof}

\begin{lem}\label{estieplam8}
For all \sk{$\tau, \eta, \ep \in (0, 1]$} 
there exists a constant \sk{$C_{\tau, \eta, \ep} > 0$} such that
\begin{equation*}
\|(\varphi_{\lambda})_{tt}\|_{L^{\infty}(0, T; V^{*})} 
+ \|J_{\lambda}(\varphi_{\lambda})_{t}\|_{L^{\infty}(0, T; V)} 
\leq \sk{C_{\tau, \eta, \ep}} 
\end{equation*}
for all $\lambda \in (0, 1]$.
\end{lem}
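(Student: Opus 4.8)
The plan is to derive the bound for $(\varphi_\lambda)_{tt}$ in $L^\infty(0,T;V^*)$ directly from the second equation in \ref{Ptauetaeplam}, and then extract the bound for $J_\lambda(\varphi_\lambda)_t$ from the third equation. First I would rewrite the second equation in \ref{Ptauetaeplam} as
\begin{equation*}
\tau (\varphi_\lambda)_{tt} = -(\varphi_\lambda)_t - (-\Delta_N)_\lambda \mu_\lambda
\end{equation*}
and estimate the right-hand side in $V^*$. The term $(\varphi_\lambda)_t$ is controlled in $L^\infty(0,T;V^*)$ by Lemma~\ref{estieplam1}. For the term $(-\Delta_N)_\lambda \mu_\lambda = (-\Delta_N)(J_\lambda \mu_\lambda)$ (using \eqref{delta-n}), I would use the bound $\| (-\Delta_N)_\lambda \mu_\lambda \|_{V^*} \leq \| J_\lambda \mu_\lambda \|_V$ — analogous to the inequality invoked in the proof of Lemma~\ref{estieplam3} — so it remains to bound $J_\lambda \mu_\lambda = J_\lambda (w_\lambda)_t$ in $L^\infty(0,T;V)$. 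But $\mu_\lambda = (-\Delta_N)_\lambda \varphi_\lambda + \eta(\varphi_\lambda)_t + \beta_\ep(\varphi_\lambda) + \pi(\varphi_\lambda) - \theta_\lambda$ from the third equation, so applying $J_\lambda$ and using the contraction-type property \eqref{sk7-1} (which gives $\|J_\lambda z\|_V \leq \|z\|_V$ when $z \in V$, and more crudely $\|J_\lambda z\|_V \lesssim \lambda^{-1/2}\|z\|_H$ when $z$ is only in $H$) is the delicate point.

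The cleaner route, which I would actually follow, is to test the third equation in \ref{Ptauetaeplam} by $-(-\Delta_N)_\lambda J_\lambda (\varphi_\lambda)_t$ — or equivalently differentiate the appropriate energy — but since $\varphi_\lambda \in C^2([0,T];H)$ this is legitimate. Multiplying the third equation by $(-\Delta_N)_\lambda (\varphi_\lambda)_t(t)$ and integrating in time, the leading term $((-\Delta_N)_\lambda \varphi_\lambda, (-\Delta_N)_\lambda(\varphi_\lambda)_t)_H = \tfrac12 \tfrac{d}{dt}\|(-\Delta_N)_\lambda \varphi_\lambda\|_H^2$ gives $\lambda^{1/2}\|(-\Delta_N)_\lambda \varphi_\lambda\|_{L^\infty(0,T;H)}$-type control (already available from Lemma~\ref{estieplam1}, so consistent), while the term $\eta((\varphi_\lambda)_t,(-\Delta_N)_\lambda(\varphi_\lambda)_t)_H = \eta\|\nabla J_\lambda (\varphi_\lambda)_t\|_H^2 + \eta\lambda\|(-\Delta_N)_\lambda(\varphi_\lambda)_t\|_H^2 \geq 0$ produces the gradient bound $\|\nabla J_\lambda(\varphi_\lambda)_t\|_{L^2(0,T;H)}$; combined with $m((\varphi_\lambda)_t) = m(v_0)e^{-t/\tau}$ bounded (cf.~\eqref{pier11}) and Poincaré's inequality this yields $J_\lambda(\varphi_\lambda)_t \in L^2(0,T;V)$. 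To upgrade to $L^\infty(0,T;V)$ one then differentiates the third equation in time — using the $C^2$ regularity of $\varphi_\lambda$ and $C^1$ regularity of $\theta_\lambda$ and $w_\lambda$ — test by $(-\Delta_N)_\lambda (\varphi_\lambda)_t$, and control $\tfrac{d}{dt}\|\nabla J_\lambda (\varphi_\lambda)_t\|_H^2$, bringing in $(w_\lambda)_{tt} = (\mu_\lambda)_t$ and $(\theta_\lambda)_t$ from Lemma~\ref{estieplam5}. The bound on $(\varphi_\lambda)_{tt}$ in $L^\infty(0,T;V^*)$ then follows by comparison in the second equation as described above, now that $J_\lambda \mu_\lambda$ is under control in $L^\infty(0,T;V)$.

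The main obstacle will be organizing the estimate so that the constants remain independent of $\lambda$: specifically, handling the terms involving $\beta_\ep(\varphi_\lambda)$ and $\pi(\varphi_\lambda)$ and their time derivatives, since differentiating $\beta_\ep(\varphi_\lambda)$ produces $\beta_\ep'(\varphi_\lambda)(\varphi_\lambda)_t$ with $\beta_\ep' \leq 1/\ep$, which is acceptable for a $C_{\tau,\eta,\ep}$-bound but must be paired against $(\varphi_\lambda)_t \in L^2(0,T;H)$ from Lemma~\ref{estieplam1} and the already-established $J_\lambda(\varphi_\lambda)_t \in L^2(0,T;V)$. One must also be careful that testing by $(-\Delta_N)_\lambda(\varphi_\lambda)_t$ rather than by $(\varphi_\lambda)_t$ itself is what produces the $V$-norm on $J_\lambda(\varphi_\lambda)_t$; the key algebraic identity $((-\Delta_N)_\lambda u, v)_H = (\nabla J_\lambda u, \nabla J_\lambda v)_H + \lambda((-\Delta_N)_\lambda u, (-\Delta_N)_\lambda v)_H$ (a consequence of \eqref{pier7} and \eqref{delta-n}) should be recorded first and used throughout. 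Finally, all right-hand side terms are closed via Young's inequality and the Gronwall lemma, exactly as in the previous lemmas.
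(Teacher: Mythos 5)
Your plan hinges on a step that is not available: when you differentiate the third equation in time you produce the term $(\mu_\lambda)_t$, and you propose to ``bring in $(w_\lambda)_{tt}=(\mu_\lambda)_t$ \dots\ from Lemma~\ref{estieplam5}'' --- but Lemma~\ref{estieplam5} only controls $(\theta_\lambda)_t$; no previous lemma gives any bound on $(\mu_\lambda)_t$, and indeed the $L^\infty(0,T;V)$ control of $J_\lambda\mu_\lambda$ that your comparison argument for $(\varphi_\lambda)_{tt}$ also relies on is Lemma~\ref{estieplam9}, which the paper \emph{deduces from} Lemma~\ref{estieplam8}. The only way to handle $(\mu_\lambda)_t$ is to make it cancel against the corresponding term coming from the time-differentiated second equation, and this dictates the test functions: the paper differentiates both the second and third equations in time, tests the former by $\mathcal{N}\bigl((\varphi_\lambda)_{tt}-m((\varphi_\lambda)_{tt})\bigr)$ and the latter by $J_\lambda\bigl((\varphi_\lambda)_{tt}+m(v_0)e^{-t/\tau}/\tau\bigr)$, so that the two occurrences of $\bigl(J_\lambda(\mu_\lambda)_t,(\varphi_\lambda)_{tt}+\cdots\bigr)_H$ cancel upon addition. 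Gronwall then yields \emph{simultaneously} $\|(\varphi_\lambda)_{tt}\|_{L^\infty(0,T;V^*)}$, $\|\nabla J_\lambda(\varphi_\lambda)_t\|_{L^\infty(0,T;H)}$ and $\|J_\lambda^{1/2}(\varphi_\lambda)_{tt}\|_{L^2(0,T;H)}$, the last of which gives $\|J_\lambda(\varphi_\lambda)_t\|_{L^\infty(0,T;H)}$ by integration in time; no comparison in the second equation is needed.

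Your choice of test function $(-\Delta_N)_\lambda(\varphi_\lambda)_t$ also fails on its own terms. In the undifferentiated estimate, the pairings $(\beta_\ep(\varphi_\lambda),(-\Delta_N)_\lambda(\varphi_\lambda)_t)_H$, $(\pi(\varphi_\lambda),(-\Delta_N)_\lambda(\varphi_\lambda)_t)_H$ involve functions that are only in $H$ pointwise in time (not in $V$), so you cannot move a gradient onto them, and absorbing them into the coercive contribution $\eta\lambda\|(-\Delta_N)_\lambda(\varphi_\lambda)_t\|_H^2$ via Young costs a factor $(\eta\lambda)^{-1}$, which is not uniform in $\lambda$. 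In the differentiated estimate, substituting $(-\Delta_N)_\lambda(\mu_\lambda)_t=-\tau(\varphi_\lambda)_{ttt}-(\varphi_\lambda)_{tt}$ into $((\mu_\lambda)_t,(-\Delta_N)_\lambda(\varphi_\lambda)_t)_H=((-\Delta_N)_\lambda(\mu_\lambda)_t,(\varphi_\lambda)_t)_H$ produces $-\tau((\varphi_\lambda)_{ttt},(\varphi_\lambda)_t)_H=-\tau\frac{d}{dt}((\varphi_\lambda)_{tt},(\varphi_\lambda)_t)_H+\tau\|(\varphi_\lambda)_{tt}\|_H^2$, and neither $((\varphi_\lambda)_{tt},(\varphi_\lambda)_t)_H$ nor $\|(\varphi_\lambda)_{tt}\|_H$ is under control, so the estimate does not close. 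The correct duality pairing ($\mathcal{N}$ on one equation, $J_\lambda$ on the other, both applied to the mean-corrected $(\varphi_\lambda)_{tt}$) is the missing idea.
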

\begin{proof}
\pier{\pcol{We make} a comparison of the terms in the third equation of \ref{Ptauetaeplam}\pcol{: thus,} as
$ \varphi_{\lambda} \in  C^2([0, T]; H)$ and 
$\theta_{\lambda}\in  C^1([0, T]; H) $, it turns out that
$\mu_{\lambda} $ possesses the regularity $ W^{1,\infty} (0, T; H) $, at least,  due to the Lipschitz continuity of $\beta_\ep $ and $\pi$. Hence, by a comparison in the second equation of \ref{Ptauetaeplam} we gain the same regularity for 
$(\varphi_{\lambda})_{tt}$. Then we can differentiate both equations with respect to time obtaining
\begin{align}\label{b3}
&\sk{\eta(\varphi_{\lambda})_{tt}(t)} + (-\Delta_{N})_{\lambda}(\varphi_{\lambda})_{t}(t) 
+ \beta_{\ep}'(\varphi_{\lambda}(t))(\varphi_{\lambda})_{t}(t) 
+ \pi'(\varphi_{\lambda}(t))(\varphi_{\lambda})_{t}(t) 
\notag \\%[2mm]
&= (\mu_{\lambda})_{t}(t) + (\theta_{\lambda})_{t}(t),
\\[2mm]
\label{b1}
&\tau (\varphi_{\lambda})_{ttt}(t) + (\varphi_{\lambda})_{tt}(t) 
+ (-\Delta_{N})_{\lambda}(\mu_{\lambda})_{t}(t) = 0, 
\end{align}
for a.e. $t\in (0,T)$. First, we multiply the difference between \eqref{b1} and the time derivative of \eqref{mean-ode} 
by ${\cal N}((\varphi_{\lambda})_{tt}(t) - m((\varphi_{\lambda})_{tt}(t)) )$ and observe that 
$m((\varphi_{\lambda})_{tt}(t)) =  - \pier{{}\sk{m(v_{0})}(e^{-t/\tau}\!/\tau)} $ thanks to \eqref{pier11}.
Then, with the help of \eqref{dadefN}-\eqref{danormaVp} and \eqref{delta-n} we infer that
\begin{align}\label{b2}
&\frac{\tau}{2}\frac{d}{dt}\|(\varphi_{\lambda})_{tt}(t) + \sk{m(v_{0})}(e^{-t/\tau}\!/\tau)  \|_*^2 
+ \|(\varphi_{\lambda})_{tt}(t)
+ \sk{m(v_{0})}(e^{-t/\tau}\!/\tau)\|_*^2 \notag\\
&{}+ (J_{\lambda}(\mu_{\lambda})_{t}(t), (\varphi_{\lambda})_{tt}(t) + \pier{\sk{m(v_{0})}(e^{-t/\tau}\!/\tau)})_{H} 
= 0.
\end{align}
Next, we test \eqref{b3} by 
\[
J_{\lambda}((\varphi_{\lambda})_{tt}(t) + \pier{\sk{m(v_{0})}(e^{-t/\tau}\!/\tau)})
= J_{\lambda}((\varphi_{\lambda})_{tt}(t) ) + \pier{\sk{m(v_{0})}(e^{-t/\tau}\!/\tau)}, \]
and note that $J_\lambda$ defined in \eqref{pier7} is a positive and self-adjoint operator. Then, we obtain 
\begin{align}\label{b4}
&\sk{\eta\|J_{\lambda}^{1/2}(\varphi_{\lambda})_{tt}(t)\|_{H}^2}  
+ \frac{1}{2}\frac{d}{dt}\|\nabla J_{\lambda}(\varphi_{\lambda})_{t}(t)\|_{H}^2
\notag \\[1mm]
&= (J_{\lambda}(\mu_{\lambda})_{t}(t), (\varphi_{\lambda})_{tt}(t) + \pier{\sk{m(v_{0})}(e^{-t/\tau}\!/\tau)})_{H} 
%\notag \\
%&\,\quad 
-  \sk{\eta(J_{\lambda}(\varphi_{\lambda})_{tt}(t), 
                                        m(v_{0})(e^{-t/\tau}\!/\tau))_{H}}
\notag \\
&\,\quad+ ((\theta_{\lambda})_{t}(t), 
                                    J_{\lambda}((\varphi_{\lambda})_{tt}(t) )+ \pier{\sk{m(v_{0})}(e^{-t/\tau}\!/\tau)})_{H} 
\notag \\
&\,\quad- (\beta_{\ep}'(\varphi_{\lambda}(t))(\varphi_{\lambda})_{t}(t), 
                                     J_{\lambda}((\varphi_{\lambda})_{tt}(t)) + \pier{\sk{m(v_{0})}(e^{-t/\tau}\!/\tau)})_{H} 
\notag \\
&\,\quad- (\pi'(\varphi_{\lambda}(t))(\varphi_{\lambda})_{t}(t), 
                                    J_{\lambda}((\varphi_{\lambda})_{tt}(t)) + \pier{\sk{m(v_{0})}(e^{-t/\tau}\!/\tau)})_{H}.
\end{align}
About initial values, from the second and third equations plus initial conditions in~\ref{Ptauetaeplam} it follows that 
\begin{align*}
&\tau (\varphi_{\lambda})_{tt}(0) = - v_{0\ep} - (-\Delta_{N})_{\lambda}\mu_{\lambda}(0),
\\
&\mu_{\lambda}(0) 
= -\theta_{0\ep} + \sk{\eta v_{0\ep}} + (-\Delta_{N})_{\lambda}\varphi_{0\ep} 
   + \beta_{\ep}(\varphi_{0\ep}) + \pi(\varphi_{0\ep}).
\end{align*}
Now, \pcol{recalling~\eqref{sk7-1}, \eqref{delta-n} and~\eqref{pier3}} we note that 
\begin{align*}
&\|(-\Delta_{N})_{\lambda}\mu_{\lambda}(0)\|_{V^*} 
\leq\|J_{\lambda}\mu_{\lambda}(0)\|_{V} 
\leq \|\mu_{\lambda}(0)\|_{V}, 
\\
&\|(-\Delta_{N})_{\lambda}\varphi_{0\ep}\|_{V} 
= \|J_{\lambda}(-\Delta_{N}\varphi_{0\ep})\|_{V} 
\leq \|-\Delta_{N}\varphi_{0\ep}\|_{V}, 
\\
&\|J_{\lambda}v_{0\ep}\|_{V} \leq \|v_{0\ep}\|_{V}
\end{align*}
for all \sk{$\ep, \lambda \in (0, 1]$.}}
Hence, \pcol{we can add \eqref{b2} and \eqref{b4}, then integrate with respect to $t \in (0, T)$. 
Using the bounds on initial values, the estimates from Lemma~\ref{estieplam1} and
\eqref{estieplam5-1}, and the property $\|J_{\lambda} v\|_{H} \leq
\|J_{\lambda}^{1/2}v\|_{H}$ for all $v\in H$,
by the Young inequality and the Gronwall lemma we infer that}
for all \sk{$\tau, \eta, \ep \in (0, 1]$} 
there exists a constant \sk{$C_{1, \tau, \eta, \ep} > 0$} such that 
\begin{equation}\label{b5}
\|(\varphi_{\lambda})_{tt}\|_{L^{\infty}(0, T; V^{*})} 
+ \|J_{\lambda}^{1/2}(\varphi_{\lambda})_{tt}\|_{L^2(0, T; H)} 
+ \|\nabla J_{\lambda}(\varphi_{\lambda})_{t}(t)\|_{L^{\infty}(0, T; H)} 
\leq \sk{C_{1, \tau, \eta, \ep}}
\end{equation}
for all $\lambda \in (0, 1]$. 
Moreover, we deduce from \eqref{b5} that 
\begin{align}\label{b6}
&\|J_{\lambda}(\varphi_{\lambda})_{t}(t)\|_{H} 
%\notag \\
%&
\leq \|J_{\lambda}^{1/2}(\varphi_{\lambda})_{t}(t)\|_{H} 
\notag \\
&\leq \int_{0}^{t}\|J_{\lambda}^{1/2}(\varphi_{\lambda})_{tt}(s)\|_{H}\,ds 
       + \|J_{\lambda}^{1/2}v_{0\ep}\|_{H}  
\notag \\
&\leq T^{1/2}\|J_{\lambda}^{1/2}(\varphi_{\lambda})_{tt}\|_{L^2(0, T; H)} 
       + \|v_{0\ep}\|_{H} 
\notag \\
&\leq C_{1, \tau, \eta, \ep}T^{1/2} + \|v_{0\ep}\|_{H}.
\end{align}
Therefore, \pcol{by \eqref{b5} and \eqref{b6} we easily conclude the proof of Lemma~\ref{estieplam8}.}
\end{proof}
\sk{\begin{lem} \label{estieplam9}
For all \sk{$\tau, \eta, \ep \in (0, 1]$} 
there exists a constant \sk{$C_{\tau, \eta, \ep} > 0$} such that
\[
\|J_{\lambda}\mu_{\lambda}\|_{L^{\infty}(0, T; V)} \leq C_{\tau, \eta, \ep}
\]
for all $\lambda \in (0, 1]$.
\end{lem}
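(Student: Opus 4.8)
The plan is to bootstrap from the second equation of~\ref{Ptauetaeplam}. Read pointwise in $t$ as an identity in $H$, it gives $(-\Delta_{N})_{\lambda}\mu_{\lambda} = -\tau(\varphi_{\lambda})_{tt} - (\varphi_{\lambda})_{t}$, and the right-hand side is already under control: Lemma~\ref{estieplam1} and Lemma~\ref{estieplam8} yield
\[
\|(-\Delta_{N})_{\lambda}\mu_{\lambda}\|_{L^{\infty}(0,T;V^{*})}
\le \tau\,\|(\varphi_{\lambda})_{tt}\|_{L^{\infty}(0,T;V^{*})} + \|(\varphi_{\lambda})_{t}\|_{L^{\infty}(0,T;V^{*})}
\le C_{\tau,\eta,\ep}.
\]
Since $J_{\lambda}\mu_{\lambda}(t)\in W$, the identities~\eqref{delta-n} and~\eqref{pier5} turn this into the variational equation
\[
\int_{\Omega}\nabla J_{\lambda}\mu_{\lambda}(t)\cdot\nabla v = \langle (-\Delta_{N})_{\lambda}\mu_{\lambda}(t),\, v\rangle_{V^{*},V}
\qquad\text{for all }v\in V\text{ and a.e. }t\in(0,T).
\]

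The next step is to extract a uniform gradient bound. The element $(-\Delta_{N})_{\lambda}\mu_{\lambda} = (-\Delta_{N})(J_{\lambda}\mu_{\lambda})$ has zero mean value, so in the identity above the test function may be replaced by $v - m(v)$; choosing $v = J_{\lambda}\mu_{\lambda}(t)$ and invoking the Poincar\'e--Wirtinger inequality $\|z - m(z)\|_{V}\le C\,\|\nabla z\|_{H}$ gives
\[
\|\nabla J_{\lambda}\mu_{\lambda}(t)\|_{H}^{2}
= \langle (-\Delta_{N})_{\lambda}\mu_{\lambda}(t),\, J_{\lambda}\mu_{\lambda}(t) - m(J_{\lambda}\mu_{\lambda}(t))\rangle_{V^{*},V}
\le C\,\|(-\Delta_{N})_{\lambda}\mu_{\lambda}(t)\|_{V^{*}}\,\|\nabla J_{\lambda}\mu_{\lambda}(t)\|_{H},
\]
and the Young inequality then produces $\|\nabla J_{\lambda}\mu_{\lambda}\|_{L^{\infty}(0,T;H)}\le C_{\tau,\eta,\ep}$.

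Finally I would recover the missing $H$-part of the $V$-norm. Pairing~\eqref{pier7} (with $u=\mu_{\lambda}(t)$) against the constant function $1$ shows $m(J_{\lambda}\mu_{\lambda}(t)) = m(\mu_{\lambda}(t))$, and $|m(\mu_{\lambda}(t))|\le C\,\|\mu_{\lambda}\|_{L^{\infty}(0,T;V^{*})}\le C_{\tau,\eta,\ep}$ by Lemma~\ref{estieplam3}; hence, using the Poincar\'e--Wirtinger inequality once more, $\|J_{\lambda}\mu_{\lambda}\|_{L^{\infty}(0,T;H)}\le C\bigl(\|\nabla J_{\lambda}\mu_{\lambda}\|_{L^{\infty}(0,T;H)} + C_{\tau,\eta,\ep}\bigr)\le C_{\tau,\eta,\ep}$, and combining with the gradient bound one concludes $\|J_{\lambda}\mu_{\lambda}\|_{L^{\infty}(0,T;V)}\le C_{\tau,\eta,\ep}$ for all $\lambda\in(0,1]$. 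The only point deserving care is the apparent circularity in the gradient estimate --- $J_{\lambda}\mu_{\lambda}$ is tested against its own Laplacian --- but this is harmless once the (uniformly bounded) mean value is split off and the Young inequality is applied, and no estimate on $\mu_{\lambda}$ beyond the $L^{\infty}(0,T;V^{*})$ bound of Lemma~\ref{estieplam3} is needed. I do not anticipate any genuine obstacle: the lemma is a short comparison argument resting on Lemmas~\ref{estieplam1},~\ref{estieplam3} and~\ref{estieplam8} together with the structure of the $\mu_{\lambda}$-equation.
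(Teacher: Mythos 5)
Your argument is correct, and its skeleton coincides with the paper's: both proofs split $J_{\lambda}\mu_{\lambda}$ into its zero-mean part, controlled in $V$ through the second equation of \ref{Ptauetaeplam} by means of Lemmas~\ref{estieplam1} and~\ref{estieplam8}, and its mean value, controlled separately, the two pieces being recombined via the Poincar\'e--Wirtinger inequality. Your treatment of the zero-mean part --- testing $\int_{\Omega}\nabla J_{\lambda}\mu_{\lambda}\cdot\nabla v=\langle(-\Delta_{N})_{\lambda}\mu_{\lambda},v\rangle_{V^{*},V}$ with $v=J_{\lambda}\mu_{\lambda}-m(J_{\lambda}\mu_{\lambda})$ --- is essentially an unwound version of the paper's application of the operator $\mathcal{N}$ to the second equation, so there is no real difference there. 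Where you genuinely diverge is the mean value: the paper applies $J_{\lambda}$ to the third equation of \ref{Ptauetaeplam} and takes mean values of all its terms, which calls upon the bounds of Lemmas~\ref{estieplam1}, \ref{estieplam2} and~\ref{estieplam8}; you instead observe from \eqref{pier7} and \eqref{pier5} that the resolvent preserves mean values, $m(J_{\lambda}u)=m(u)$, so that $|m(J_{\lambda}\mu_{\lambda}(t))|=|m(\mu_{\lambda}(t))|\le C\|\mu_{\lambda}(t)\|_{V^{*}}$ is already covered by Lemma~\ref{estieplam3}. This is a cleaner shortcut that bypasses the third equation entirely and rests only on an elementary property of $J_{\lambda}$; both routes deliver the same uniform constant $C_{\tau,\eta,\ep}$.
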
}%
\sk{\begin{proof}
\pcol{Due to \eqref{mean-ode}, we can apply the operator $\cal N$ defined in \eqref{defN} to the second equation in~\ref{Ptauetaeplam}, and find out that 
\[
{\cal N} ({\tau}(\varphi_{\lambda})_{tt} + (\varphi_{\lambda})_{t} ) =
- J_{\lambda}\mu_{\lambda} + m( J_{\lambda}\mu_{\lambda}) \quad \hbox{a.e. in } (0,T). 
\]
Hence, owing to Lemmas \ref{estieplam1} and \ref{estieplam8}, by comparison of terms we infer that 
\begin{align}\label{bb1}
\|J_{\lambda}\mu_{\lambda} - m( J_{\lambda}\mu_{\lambda})\|_{L^{\infty}(0, T; V)} 
\leq C_{\tau, \eta, \ep}
\end{align}
for all $\lambda \in (0, 1]$. 
On the other hand, applying $J_\lambda$ to the third equation in~\ref{Ptauetaeplam} leads~to 
\begin{align*}
J_{\lambda}\mu_{\lambda} 
= \eta J_{\lambda}(\varphi_{\lambda})_{t} 
   -\Delta_{N}J_{\lambda}J_{\lambda}\varphi_{\lambda} 
   + J_{\lambda}\beta_{\ep}(\varphi_{\lambda}) 
   + J_{\lambda}\pi(\varphi_{\lambda}) 
   - J_{\lambda}\theta_{\lambda} \quad \hbox{a.e. in } (0,T).  
\end{align*}
Then, taking the mean value of all terms,
from Lemmas~\ref{estieplam1}, \ref{estieplam2}, \ref{estieplam8} it follows  that 
for all $\tau, \eta, \ep \in (0, 1]$ 
there exists a constant $C_{\tau, \eta, \ep} > 0$ 
such that 
\begin{align}\label{bb2}
\|m(J_{\lambda}\mu_{\lambda})\|_{L^{\infty}(0, T)} 
\leq C_{\tau, \eta, \ep}
\end{align}
for all $\lambda \in (0, 1]$. 
Therefore, \eqref{bb1}, \eqref{bb2} allow us to prove Lemma~\ref{estieplam9}.}
\end{proof}}
\begin{prth3.1}
Let \sk{$\tau, \ep \in (0, 1]$.} 
Then, owing to Lemmas \ref{estieplam1}-\ref{estieplam9}, 
the compact embeddings 
$V \hookrightarrow H \hookrightarrow V^*$, 
the Aubin--Lions lemma, 
there exist some functions 
$\theta_{\ep}$, $w_{\ep}$, $\mu_{\ep}$, $\varphi_{\ep}$, $\xi_{\ep}$ 
fulfilling 
\begin{align*}
&\theta_{\ep} \in H^1(0, T; H) \cap L^{\infty}(0, T; V) \cap L^2(0, T; W), \\
&w_{\ep} \in \pcol{W^{1, \infty}(0, T; V) \cap L^2(0, T; W),} \\
&\mu_{\ep} \in \sk{L^\infty (0, T; V)}, \\
&\varphi_{\ep} \in \sk{W^{2, \infty}(0, T; V^{*})} \cap W^{1, \infty}(0, T; V), \\
&\xi_{\ep} \in L^{\infty}(0, T; H \cap L^q(\Omega)), 
\end{align*}
and
\begin{align}
&\theta_{\lambda}\pcol{, J_{\lambda}\theta_{\lambda}}  \to \theta_{\ep} 
\quad \mbox{weakly in}\ H^1(0, T; H), 
\label{wc1ep} \\
&J_{\lambda}\theta_{\lambda} \to \theta_{\ep} 
\quad \mbox{weakly* in}\ L^{\infty}(0, T; V), 
\label{wc2ep} \\
&J_{\lambda}\theta_{\lambda} \to \theta_{\ep} 
\quad \mbox{strongly in}\ \pcol{C^0}([0, T]; H), 
\label{st1ep} \\
&(-\Delta_{N})_{\lambda}\theta_{\lambda} \to -\Delta_{N}\theta_{\ep} 
\quad \mbox{weakly in}\ L^2(0, T; H), 
\label{wc3ep} \\[2mm]
&\varphi_{\lambda} \to \varphi_{\ep} 
\quad \mbox{weakly* in}\ \sk{W^{2, \infty}(0, T; V^*)} \cap L^{\infty}(0, T; H), 
\label{wc4ep} \\
&\varphi_{\lambda} \to \varphi_{\ep} 
\quad \mbox{weakly in}\ H^1(0, T; H), 
\label{wc5ep} \\
&J_{\lambda}\varphi_{\lambda} \to \varphi_{\ep} 
\quad \mbox{weakly* in}\ W^{1, \infty}(0, T; V), 
\label{wc6ep} \\
&J_{\lambda}\varphi_{\lambda} \to \varphi_{\ep} 
\quad \mbox{strongly in}\ \pcol{C^0}([0, T]; H), 
\label{st2ep} \\
&\beta_{\ep}(\varphi_{\lambda}) \to \xi_{\ep} 
\quad \mbox{weakly* in}\ L^{\infty}(0, T; H \cap L^q(\Omega)), 
\label{wc7ep} \\[2mm]
&w_{\lambda} \to w_{\ep} 
\quad \mbox{weakly* in}\ L^{\infty}(0, T; H), 
\label{wc8ep} \\
&(w_{\lambda})_{t}\ (= \mu_{\lambda}) \to \mu_{\ep}\ \pcol{(=(w_{\ep})_{t})}
\quad \mbox{weakly* in}\ \sk{L^{\infty} (0, T; V^{*})}, 
\label{wc9ep} \\
&\sk{\pcol{J_{\lambda}w_{\lambda} \to w_{\ep} \ \hbox{ and }}\  J_{\lambda}\mu_{\lambda} \to \mu_{\ep} 
\quad \mbox{weakly* in}\ \sk{L^{\infty} (0, T; V)},} 
\label{wc10ep} \\
&(-\Delta_{N})_{\lambda}w_{\lambda} = -\Delta_{N}w_{\ep} 
\quad \mbox{weakly in}\ L^2(0, T; H), 
\label{wc11ep} \\
%&J_{\lambda}w_{\lambda} \to w_{\ep} 
%\quad \mbox{weakly* in}\ L^{\infty}(0, T; V), 
%\label{wc12ep} \\
&J_{\lambda}w_{\lambda} \to w_{\ep} 
\quad \mbox{strongly in}\ \pcol{C^0}([0, T]; H) 
\label{st3ep}
\end{align}
\pcol{along a subsequence of} $\lambda = \lambda_{j} \searrow 0$. 
It follows from \eqref{st2ep} and Lemma \ref{estieplam1} that 
\begin{equation}\label{st4ep}
\varphi_{\lambda} 
= J_{\lambda}\varphi_{\lambda} 
   + \lambda^{1/2}\cdot\lambda^{1/2}(-\Delta_{N})_{\lambda}\varphi_{\lambda} 
\to \varphi_{\ep}  
\quad \mbox{strongly in}\ L^{\infty}(0, T; H)
\end{equation}
 $\lambda = \lambda_{j} \searrow 0$. 
Therefore, thanks to \eqref{wc1ep}-\eqref{st4ep} \pcol{and
the Lipschitz continuity of $\beta_{\ep}$ and $\pi$, it turns out that $\xi_\ep 
= \beta_{\ep}(\varphi_{\ep})$ and the passage to the limit in~\ref{Ptauetaeplam} as $\lambda = \lambda_{j} \searrow 0$
enables us to conclude that  $(\theta_{\ep},\mu_{\ep}, \varphi_{\ep})$
is a weak solutions of~\ref{Ptauetaep}.}
 \qed 
\end{prth3.1}

\pcol{\begin{remark}
\label{reg}
Note that the found solution  $(\theta_{\ep},\mu_{\ep}, \varphi_{\ep})$ also satisfies
 $\varphi_{\ep} \in L^\infty(0,T;W)$. Indeed, from \eqref{dfPtauetaepsol3} it follows that 
\begin{align*}
&\int_{\Omega} \nabla \varphi_{\ep} \cdot \nabla v 
=  (\mu_{\ep} + \theta_{\ep} - \pi(\varphi_{\ep}) - \eta(\varphi_{\ep})_{t}- \beta_{\ep}(\varphi_{\ep}), v)_{H} \quad \mbox{a.e.\ on}\ (0, T) 
\end{align*}
for all $ v \in V,$ with the term $\mu_{\ep} + \theta_{\ep} - \pi(\varphi_{\ep}) - \eta(\varphi_{\ep})_{t}- \beta_{\ep}(\varphi_{\ep}) $ on the right-hand side that is in $L^\infty(0,T;H)$, at least. Then, the assertion follows from well-known elliptic regularity results. 
\end{remark}}

\vspace{10pt}

%%==============================================================%%
%%==============                                  ==============%%
%%======                      Section6                    ======%%
%%====                                                      ====%%
%%==                                                          ==%%
%%====                                                      ====%%
%%======                                                  ======%%
%%==============                                  ==============%%
%%==============================================================%%

\section{Estimates for \ref{Ptauetaep} and passage to the limit as $\ep\searrow0$} 
\label{Sec6}

In this section we will establish a priori estimates for \ref{Ptauetaep} 
and will prove Theorem~\ref{maintheorem1} 
by passing to the limit in \ref{Ptauetaep} as $\ep \searrow 0$.

\begin{lem}\label{estiep1}
There exists a constant $C > 0$ such that 
\begin{align*}
&\|\theta_{\ep}\|_{L^{\infty}(0, T; H)} 
+ \|\nabla\theta_{\ep}\|_{L^2(0, T; H)} 
\\
&+ \sk{\eta^{1/2}\|(\varphi_{\ep})_{t}\|_{L^2(0, T; H)}} 
+ \sk{\tau^{1/2}\|(\varphi_{\ep})_{t}\|_{L^{\infty}(0, T; V^{*})}} 
+ \pcol{\|(\varphi_{\ep})_{t}\|_{L^2(0, T; V^{*})}} 
\\
&+ \|\nabla\varphi_{\ep}\|_{L^{\infty}(0, T; H)} 
+ \|\widehat{\beta}_{\ep}(\varphi_{\ep})\|_{L^{\infty}(0, T; L^1(\Omega))} 
\leq C
\end{align*}
for all \sk{$\tau, \eta, \ep \in (0, 1]$.}
\end{lem}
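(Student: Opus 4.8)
The plan is to derive Lemma~\ref{estiep1} by reproducing, at the level of the $\ep$-problem~\ref{Ptauetaep}, the same energy identity that was used in the proof of Lemma~\ref{estieplam1}, but now taking care that every constant is independent of $\tau$, $\eta$, $\ep$. Since the solution $(\theta_{\ep},\mu_{\ep},\varphi_{\ep})$ of~\ref{Ptauetaep} has been obtained in the proof of Theorem~\ref{maintheorem3} (with the extra regularity $\varphi_{\ep}\in L^\infty(0,T;W)$ from Remark~\ref{reg}), the test functions below are all admissible. First I would test \eqref{dfPtauetaepsol1} by $\theta_{\ep}$, which produces $\tfrac12\tfrac{d}{dt}\|\theta_{\ep}\|_H^2 + \|\nabla\theta_{\ep}\|_H^2 = -((\varphi_{\ep})_t,\theta_{\ep})_H + \langle f_{\ep},\theta_{\ep}\rangle_{V^*,V}$. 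Next, using \eqref{pier11} one has $m((\varphi_{\ep})_t(t))=m(v_0)e^{-t/\tau}$ and $m(\varphi_{\ep}(t))$ as in \eqref{pier10}; I would subtract the mean and test the second equation \eqref{dfPtauetaepsol2} (in its differentiated form $\tau\langle(\varphi_{\ep})_{tt},v\rangle+\langle(\varphi_{\ep})_t,v\rangle+\int_\Omega\nabla\mu_{\ep}\cdot\nabla v=0$, i.e.~\eqref{dfPtauetaepsol2} read with $w_\ep$) by $\mathcal N\big((\varphi_{\ep})_t - m(v_0)e^{-t/\tau}\big)$, obtaining, via \eqref{propN} and \eqref{danormaVp},
\[
\frac{\tau}{2}\frac{d}{dt}\big\|(\varphi_{\ep})_t - m(v_0)e^{-t/\tau}\big\|_*^2 + \big\|(\varphi_{\ep})_t - m(v_0)e^{-t/\tau}\big\|_*^2 + \big(\mu_{\ep},(\varphi_{\ep})_t - m(v_0)e^{-t/\tau}\big)_H = 0.
\]

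Then I would test the third equation \eqref{dfPtauetaepsol3} by $(\varphi_{\ep})_t - m(v_0)e^{-t/\tau}$, which yields $\eta\|(\varphi_{\ep})_t\|_H^2 - \eta((\varphi_{\ep})_t,m(v_0)e^{-t/\tau})_H + \tfrac12\tfrac{d}{dt}\|\nabla\varphi_{\ep}\|_H^2 + \tfrac{d}{dt}\int_\Omega\widehat\beta_{\ep}(\varphi_{\ep}) = \int_\Omega\beta_{\ep}(\varphi_{\ep})\,m(v_0)e^{-t/\tau} - (\pi(\varphi_{\ep}),(\varphi_{\ep})_t - m(v_0)e^{-t/\tau})_H + (\mu_{\ep},(\varphi_{\ep})_t - m(v_0)e^{-t/\tau})_H + (\theta_{\ep},(\varphi_{\ep})_t)_H - (\theta_{\ep},m(v_0)e^{-t/\tau})_H$. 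Adding the three identities, the terms $(\mu_{\ep},(\varphi_{\ep})_t - m(v_0)e^{-t/\tau})_H$ cancel exactly, and the terms $\pm(\theta_{\ep},(\varphi_{\ep})_t)_H$ cancel as well — this is the crucial structural cancellation already exploited in Lemma~\ref{estieplam1}. One is left with a differential inequality for $\tfrac12\|\theta_{\ep}\|_H^2 + \tfrac{\tau}{2}\|(\varphi_{\ep})_t - m(v_0)e^{-t/\tau}\|_*^2 + \tfrac12\|\nabla\varphi_{\ep}\|_H^2 + \int_\Omega\widehat\beta_{\ep}(\varphi_{\ep})$, with dissipation terms $\|\nabla\theta_{\ep}\|_H^2$, $\|(\varphi_{\ep})_t - m(v_0)e^{-t/\tau}\|_*^2$, $\eta\|(\varphi_{\ep})_t\|_H^2$ on the left.

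The right-hand side terms are controlled as follows: $\langle f_{\ep},\theta_{\ep}\rangle_{V^*,V}$ by Young against $\|\theta_{\ep}\|_V^2\le C(\|\nabla\theta_{\ep}\|_H^2 + \|\theta_{\ep}\|_H^2)$, absorbing a fraction of the gradient dissipation and using $f_{\ep}\to f$ in $L^2(0,T;V^*)$ so $\|f_{\ep}\|_{L^2(0,T;V^*)}\le C$; the term $\int_\Omega\beta_{\ep}(\varphi_{\ep})\,m(v_0)e^{-t/\tau}$ by \eqref{rem3}, which gives $|\beta_{\ep}(\varphi_{\ep})|\le |\beta_{\ep}(\varphi_{\ep})|^q$-type control when $q\ge1$, hence $\int_\Omega|\beta_{\ep}(\varphi_{\ep})|\le C(1+\int_\Omega\widehat\beta_{\ep}(\varphi_{\ep}))$ together with $|m(v_0)e^{-t/\tau}|\le C$; the $\pi$-terms by Lipschitz continuity of $\pi$ (with $\pi(0)$ bounded), Young, and $|\Omega|$-finiteness, putting part onto $\|(\varphi_{\ep})_t - m(v_0)e^{-t/\tau}\|_*^2$ — note here one must pass from the $H$-inner product $(\pi(\varphi_{\ep}),(\varphi_{\ep})_t - m(v_0)e^{-t/\tau})_H$ to a $V$-$V^*$ pairing, which is fine because $\pi(\varphi_{\ep})\in L^\infty(0,T;H)\subset L^\infty(0,T;V^*)$ is bounded thanks to the $\nabla\varphi_{\ep}$ and mean-value control; the $\eta$- and $\theta_{\ep}$-against-$m(v_0)e^{-t/\tau}$ terms by Young, using $\eta\le1$. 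Then integrate in $t$, use the boundedness of the initial data — in particular \eqref{pier3}, \eqref{pier3.1}, and $\widehat\beta_{\ep}(\varphi_{0\ep})\le\widehat\beta(\varphi_{0\ep})\le C$ via \eqref{rem2} — and apply Gronwall's lemma. This delivers the bounds on $\|\theta_{\ep}\|_{L^\infty(0,T;H)}$, $\|\nabla\theta_{\ep}\|_{L^2(0,T;H)}$, $\eta^{1/2}\|(\varphi_{\ep})_t\|_{L^2(0,T;H)}$, $\tau^{1/2}\|(\varphi_{\ep})_t - m(v_0)e^{-t/\tau}\|_{L^\infty(0,T;V^*)}$, $\|\nabla\varphi_{\ep}\|_{L^\infty(0,T;H)}$, $\|\widehat\beta_{\ep}(\varphi_{\ep})\|_{L^\infty(0,T;L^1(\Omega))}$; the asserted bound on $\tau^{1/2}\|(\varphi_{\ep})_t\|_{L^\infty(0,T;V^*)}$ follows by adding $\tau^{1/2}|m(v_0)e^{-t/\tau}|\,\|1\|_{V^*}\le C$, and the $L^2(0,T;V^*)$-bound on $(\varphi_{\ep})_t$ follows from the $\|(\varphi_{\ep})_t - m(v_0)e^{-t/\tau}\|_{L^2(0,T;V^*)}$ dissipation plus the bounded mean part.

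The main obstacle is not any single estimate but ensuring $\tau$- and $\eta$-independence throughout: one must be careful that the coefficient $\tau$ multiplying the $\|\cdot\|_*^2$-terms is kept as a genuine weight (not divided out), that the $\eta$ in front of $\|(\varphi_{\ep})_t\|_H^2$ is only ever used with $\eta\le1$ on the right-hand side and never needed from below, and that the exponential factor $e^{-t/\tau}$ is bounded by $1$ uniformly — so that the Gronwall constant depends only on $T$, $\|f\|_{L^2(0,T;V^*)}$, the Lipschitz constant of $\pi$, $c_\beta$, $|\Omega|$, and the (uniformly bounded) norms of the initial data. The structural cancellations of the $\mu_{\ep}$- and $\theta_{\ep}(\varphi_{\ep})_t$-cross terms are what make the argument close without any inverse powers of $\tau$ or $\eta$ appearing.
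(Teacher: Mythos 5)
Your proposal is correct and follows essentially the same route as the paper: test \eqref{dfPtauetaepsol1} by $\theta_{\ep}$, take $v=\mathcal N((\varphi_{\ep})_t-m(v_0)e^{-t/\tau})$ in \eqref{dfPtauetaepsol2} and $v=(\varphi_{\ep})_t-m(v_0)e^{-t/\tau}$ in \eqref{dfPtauetaepsol3}, sum, exploit the cancellation of the $\mu_{\ep}$ and $(\theta_{\ep},(\varphi_{\ep})_t)_H$ cross terms, control the $\beta_{\ep}$ term via \eqref{rem3}, the $\pi$ term via the zero-mean pairing against $\|\nabla\varphi_{\ep}\|_H$, and conclude with the uniform bounds on the initial data from \eqref{pier3}--\eqref{pier3.1} and Gronwall. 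This is exactly the derivation of \eqref{c1}--\eqref{c2} in the paper, with the same attention to $\tau$- and $\eta$-independence.
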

\begin{proof}
\sk{\pcol{Testing} \eqref{dfPtauetaepsol1} by $\theta_{\ep}(t)$, 
taking $v = \mathcal{N}((\varphi_{\ep})_{t}(t) - m(v_{0})e^{-t/\tau})$ 
in \eqref{dfPtauetaepsol2}, 
choosing $v = (\varphi_{\ep})_{t}(t) - m(v_{0})e^{-t/\tau}$ 
in \eqref{dfPtauetaepsol3}\pcol{, finally summing the three equalities}
yield that} 
\begin{align}\label{c1}
&\frac{1}{2}\frac{d}{dt}\|\theta_{\ep}(t)\|_{H}^2 
+ \|\nabla\theta_{\ep}(t)\|_{H}^2 
\notag \\
&+ \sk{\frac{\tau}{2}\frac{d}{dt}\|(\varphi_{\ep})_{t}(t) - m(v_{0})e^{-t/\tau}\|_*^2 
+ \|(\varphi_{\ep})_{t}(t) - m(v_{0})e^{-t/\tau}\|_*^2} 
\notag \\
&+ \sk{\eta\|(\varphi_{\ep})_{t}(t)\|_{H}^2} 
+ \frac{1}{2}\frac{d}{dt}\|\nabla\varphi_{\ep}(t)\|_{H}^2 
+ \frac{d}{dt}\int_{\Omega} \widehat{\beta}_{\ep}(\varphi_{\ep}(t))
\notag \\[2mm]
&= \langle f_{\ep}(t), \theta_{\ep}(t) \rangle_{V^{*}, V} 
          - \langle (\varphi_{\ep})_{t}(t) - \sk{m(v_{0})e^{-t/\tau}}, \pi(\varphi_{\ep}(t)) \rangle_{V^{*}, V} 
\notag \\
&\,\quad 
+ \sk{\eta((\varphi_{\ep})_{t}(t), m(v_{0})e^{-t/\tau})_{H}} \pcol{{}+{}} (\theta_{\ep}(t), \sk{m(v_{0})e^{-t/\tau}})_{H}
\notag \\
&\,\quad 
+ \int_{\Omega}\beta_{\ep}(\varphi_{\ep}(t))\sk{m(v_{0})e^{-t/\tau}}  . 
\end{align}
\pcol{Concerning the right-hand side, it is not difficult to see that} \sk{there exists a constant $C_{1}> 0$} such that 
\begin{align}\label{c2}
&\langle f_{\ep}(t), \theta_{\ep}(t) \rangle_{V^{*}, V} 
          - \langle (\varphi_{\ep})_{t}(t) - \sk{m(v_{0})e^{-t/\tau}}, \pi(\varphi_{\ep}(t)) \rangle_{V^{*}, V} 
\notag \\
&= \langle f_{\ep}(t), \theta_{\ep}(t) \rangle_{V^{*}, V} 
    - \langle (\varphi_{\ep})_{t}(t) - \sk{m(v_{0})e^{-t/\tau}}, 
                        \pi(\varphi_{\ep}(t)) - m(\pi(\varphi_{\ep}(t))) 
                                                                      \rangle_{V^*, V}  
\notag \\
&\leq \sk{\frac{1}{2}\|\theta_{\ep}(t)\|_{V}^2 \pcol{{} + \frac{1}{2}\|f_{\ep}(t)\|_{V^{*}}^2}
+ \frac{1}{2}\|(\varphi_{\ep})_{t}(t) - m(v_{0})e^{-t/\tau}\|_*^2 
+ C_{1}\|\nabla\varphi_{\ep}(t)\|_{H}^2 }
\end{align}
for all \sk{$\tau, \eta, \ep \in (0, 1]$} and a.a.\ $t \in (0, T)$. \pcol{In view of \eqref{a7}, the third and fourth terms on the right-hand side of \eqref{c1} can be easily treated with the Young inequality. The last term in~\eqref{c1} is under control due \pcol{to~\eqref{rem3}}. If now we integrate with respect to time, we also obtain the contribution due to the initial values
\[
\frac{1}{2}\|\theta_{0\ep}\|_{H}^2 + \frac{\tau}{2}\|v_{0\ep} - m(v_{0})\|_*^2 
+ \frac{1}{2}\|\nabla\varphi_{0\ep}\|_{H}^2 
+ \int_{\Omega} \widehat{\beta}_{\ep}(\varphi_{0\ep}),
\]
which is however bounded independently of $\ep$ due to \eqref{pier3} and \eqref{pier3.1}.  
Therefore, the proof of Lemma~\ref{estiep1} can be completed by applying the Gronwall lemma.} 
\end{proof}

\begin{lem}\label{estiep2}
There exists a constant $C > 0$ such that 
\[
\|\varphi_{\ep}\|_{\pcol{L^{\infty}(0, T; V)}} \leq C
\]
for all \sk{$\tau, \eta, \ep \in (0, 1]$.}
\end{lem}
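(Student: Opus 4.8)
The plan is to upgrade the gradient estimate $\|\nabla\varphi_{\ep}\|_{L^{\infty}(0,T;H)} \le C$ already available from Lemma~\ref{estiep1} to a full $L^{\infty}(0,T;V)$ bound by controlling the spatial mean value $m(\varphi_{\ep}(t))$ and then invoking the Poincar\'e--Wirtinger inequality.

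First I would recover the evolution of the mean value. Choosing $v \equiv 1$ in \eqref{dfPtauetaepsol2} and noting that $\int_{\Omega}\nabla\mu_{\ep}\cdot\nabla 1 = 0$, one gets $\tau\, m((\varphi_{\ep})_{tt}(t)) + m((\varphi_{\ep})_{t}(t)) = 0$, that is, the same ODE as \eqref{mean-ode}. Taking into account the initial conditions in \eqref{dfPtauetaepsol4} and the equalities $m(\varphi_{0\ep}) = m(\varphi_{0})$, $m(v_{0\ep}) = m(v_{0})$ from \eqref{pier3.2}, the computation leading to \eqref{pier9}--\eqref{pier11} gives
\begin{equation*}
m(\varphi_{\ep}(t)) = m(\varphi_{0})\,e^{-t/\tau} + m_{0}\,\bigl(1 - e^{-t/\tau}\bigr), \qquad m_{0} := \tau\, m(v_{0}) + m(\varphi_{0}),
\end{equation*}
for all $t \in [0,T]$. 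Since $\tau \le 1$ we have $|m_{0}| \le |m(v_{0})| + |m(\varphi_{0})|$, and since $m(\varphi_{\ep}(t))$ is a convex combination of $m(\varphi_{0})$ and $m_{0}$, it follows that $|m(\varphi_{\ep}(t))| \le |m(v_{0})| + |m(\varphi_{0})|$, a bound independent of $\tau, \eta, \ep$ and of $t \in [0,T]$.

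Then, writing $\varphi_{\ep}(t) = \bigl(\varphi_{\ep}(t) - m(\varphi_{\ep}(t))\bigr) + m(\varphi_{\ep}(t))$ and using the Poincar\'e--Wirtinger inequality $\|z - m(z)\|_{H} \le C_{\Omega}\|\nabla z\|_{H}$ for $z \in V$, together with Lemma~\ref{estiep1}, I would estimate
\begin{equation*}
\|\varphi_{\ep}(t)\|_{H} \le C_{\Omega}\|\nabla\varphi_{\ep}(t)\|_{H} + |\Omega|^{1/2}\,|m(\varphi_{\ep}(t))| \le C \qquad \text{for a.a.\ } t \in (0,T),
\end{equation*}
and, combining with the gradient bound, conclude that $\|\varphi_{\ep}\|_{L^{\infty}(0,T;V)} \le C$. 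There is no real obstacle here: the argument is routine, the only point to keep in mind being that the constants stay independent of $\tau, \eta, \ep$, which is ensured by $\tau \le 1$ (which keeps $m_{0}$ bounded) and by the fact that the mean-value identity above has a data-dependent right-hand side only. An equivalent route would be to test the third equation of \ref{Ptauetaep} directly by $\varphi_{\ep}$, but the mean-value argument is shorter once the gradient is already controlled.
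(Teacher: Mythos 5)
Your proposal is correct and follows essentially the same route as the paper: the authors likewise combine the explicit mean-value formula $m(\varphi_{\ep}(t)) = m(\varphi_{0})e^{-t/\tau} + m_{0}(1 - e^{-t/\tau})$ (cf.\ \eqref{pier10}) with the gradient bound from Lemma~\ref{estiep1} and the Poincar\'e--Wirtinger inequality. Your re-derivation of the mean-value ODE directly from \eqref{dfPtauetaepsol2} with $v\equiv 1$ and the observation that $\tau\le 1$ keeps $m_0$ bounded are exactly the points the paper leaves implicit.
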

\begin{proof}
Since \pcol{(cf.~\eqref{pier10})}
\begin{equation*}
m(\varphi_{\ep}(t)) = \sk{m(\varphi_{0})}e^{-t/\tau } + \sk{m_{0}}(1 - e^{-t/\tau}),
\end{equation*}
we can obtain Lemma \ref{estiep2} 
by Lemma \ref{estiep1} and the Poincar\'e--Wirtinger inequality. 
\end{proof}

\begin{lem}\label{estiep3}
There exists a constant $C > 0$ such that 
\[
\|\beta_{\ep}(\varphi_{\ep})\|_{L^{\infty}(0, T; L^q(\Omega))} \leq C
\]
for all \sk{$\tau, \eta, \ep \in (0, 1]$.}
\end{lem}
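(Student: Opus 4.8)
The plan is to obtain the bound directly, without introducing any new test function or differential inequality, by combining the pointwise growth estimate \eqref{rem3} for the Yosida approximation $\beta_{\ep}$ with the uniform $L^{\infty}(0,T;L^{1}(\Omega))$-bound on $\widehat{\beta}_{\ep}(\varphi_{\ep})$ that was already established in Lemma~\ref{estiep1}. In other words, once those two ingredients are in hand, Lemma~\ref{estiep3} is essentially an immediate corollary.

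First I would recall from \eqref{rem3} that, for all $\ep\in(0,1]$ and a.a. $(t,x)\in(0,T)\times\Omega$,
\begin{equation*}
|\beta_{\ep}(\varphi_{\ep}(t,x))|^{q} \leq c_{\beta}\bigl(1+\widehat{\beta}_{\ep}(\varphi_{\ep}(t,x))\bigr),
\end{equation*}
where $c_{\beta}$ is the structural constant from (C1). Integrating this inequality over $\Omega$ for a.a. fixed $t\in(0,T)$ gives
\begin{equation*}
\|\beta_{\ep}(\varphi_{\ep}(t))\|_{L^{q}(\Omega)}^{q}
= \int_{\Omega}|\beta_{\ep}(\varphi_{\ep}(t))|^{q}
\leq c_{\beta}\Bigl(|\Omega|+\int_{\Omega}\widehat{\beta}_{\ep}(\varphi_{\ep}(t))\Bigr)
= c_{\beta}\bigl(|\Omega|+\|\widehat{\beta}_{\ep}(\varphi_{\ep}(t))\|_{L^{1}(\Omega)}\bigr).
\end{equation*}
By Lemma~\ref{estiep1} the quantity $\|\widehat{\beta}_{\ep}(\varphi_{\ep})\|_{L^{\infty}(0,T;L^{1}(\Omega))}$ is bounded by a constant $C$ independent of $\tau,\eta,\ep$, so the right-hand side above is bounded by $c_{\beta}(|\Omega|+C)$ for a.a. $t\in(0,T)$. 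Taking the essential supremum over $t$ and then the $q$-th root yields $\|\beta_{\ep}(\varphi_{\ep})\|_{L^{\infty}(0,T;L^{q}(\Omega))}\leq\bigl(c_{\beta}(|\Omega|+C)\bigr)^{1/q}$, which is precisely the claimed estimate.

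There is no real obstacle in this argument; the only point deserving a moment of attention is the bookkeeping of the constants. One must observe that $c_{\beta}$ comes from the hypothesis (C1) and is therefore purely structural, while the $L^{1}$-bound supplied by Lemma~\ref{estiep1} is uniform in $\tau$, $\eta$ and $\ep$ — exactly as stated there. Hence the resulting constant in Lemma~\ref{estiep3} is genuinely independent of $\tau,\eta,\ep\in(0,1]$, as required for the subsequent passage to the limit.
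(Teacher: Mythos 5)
Your argument is correct and is exactly the paper's proof: the paper's one-line justification invokes precisely the growth property \eqref{rem3} together with the uniform bound on $\|\widehat{\beta}_{\ep}(\varphi_{\ep})\|_{L^{\infty}(0,T;L^{1}(\Omega))}$ from Lemma~\ref{estiep1}, which you have simply written out in detail. The bookkeeping of constants is also as you describe, so nothing further is needed.
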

\begin{proof}
\pcol{The thesis holds true due to} Lemma~\ref{estiep1} \pcol{and~\eqref{rem3}}. 
\end{proof}

\begin{lem}\label{estiep4}
There exists a constant $C > 0$ such \pcol{that 
%\[
%\|\mu_{\ep}\|_{L^{\infty}(0, T; V^*)} (= \|(w_{\ep})_{t}\|_{L^{\infty}(0, T; V^*)}) \leq C
%\]
%\sk{for all $\tau, \ep \in (0, 1]$ and all $0 < \eta \leq C \tau$, whereas 
\begin{equation}\label{estiep4-1}
\|\mu_{\ep}\|_{L^{2}(0, T; V^*)} (= \|(w_{\ep})_{t}\|_{L^{2}(0, T; V^*)}) \leq C
\end{equation}
for all $\tau, \eta, \ep \in (0, 1]$.} 
Also, for all $\tau \in (0, 1]$ there exists a constant $C_{\tau} > 0$ such that 
\begin{equation}\label{estiep4-2}
\|\mu_{\ep}\|_{L^{\infty}(0, T; V^*)} (= \|(w_{\ep})_{t}\|_{L^{\infty}(0, T; V^*)}) \leq C_{\tau}
\end{equation}
for all $\eta, \ep \in (0, 1]$.
\end{lem}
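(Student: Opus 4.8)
The goal is to bound $\mu_\ep = (w_\ep)_t$ in $L^2(0,T;V^*)$ independently of all parameters, and in $L^\infty(0,T;V^*)$ with a constant that is allowed to depend on $\tau$. The natural strategy is to compare terms in the second equation of~\ref{Ptauetaep}, namely $\tau(\varphi_\ep)_{tt} + (\varphi_\ep)_t - \Delta\mu_\ep = 0$, rewritten in the weak form~\eqref{dfPtauetaepsol2}. First I would note that integrating that equation over $\Omega$ gives $\tau\,m((\varphi_\ep)_{tt}) + m((\varphi_\ep)_t) = 0$, so that (exactly as in~\eqref{pier11} at the $\lambda$-level) $m((\varphi_\ep)_t(t)) = m(v_0)e^{-t/\tau}$, which is bounded by~\eqref{a7}. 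Hence $(\varphi_\ep)_t - m((\varphi_\ep)_t)$ has zero mean and one can apply the operator $\cal N$ of~\eqref{defN}.

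\textbf{Key steps.} Applying $\cal N$ to the difference between the second equation and its spatial average yields
\[
\nabla w_\ep = -\,\nabla\,{\cal N}\bigl(\tau(\varphi_\ep)_{tt} + (\varphi_\ep)_t - m(v_0)e^{-t/\tau}\bigr) + (\hbox{a purely time-dependent correction}),
\]
so that, up to the mean value of $w_\ep$, the function $w_\ep$ is controlled in $V$ by $\tau(\varphi_\ep)_{tt} + (\varphi_\ep)_t$ measured in $V^*$. Differentiating in time, $\mu_\ep = (w_\ep)_t$ is then controlled in $V^*$ by $\tau(\varphi_\ep)_{tt} + (\varphi_\ep)_t$ in $V^*$, \emph{modulo} the mean value $m(\mu_\ep)$. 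For~\eqref{estiep4-1}: from Lemma~\ref{estiep1} we have $\|(\varphi_\ep)_t\|_{L^2(0,T;V^*)} \leq C$ (with no parameter dependence), and the term $\tau(\varphi_\ep)_{tt}$ is handled by observing that $\tau(\varphi_\ep)_{tt} = -(\varphi_\ep)_t + \Delta\mu_\ep$ has no better-than-$\tau$-dependent bound directly, so instead I would estimate $\tau\,m((\varphi_\ep)_{tt}) = -m((\varphi_\ep)_t)$ (bounded) for the mean part, and recover the zero-mean part of $\tau(\varphi_\ep)_{tt}$ differently: test~\eqref{dfPtauetaepsol2} against ${\cal N}$ applied to a test element and use that $\|\tau(\varphi_\ep)_t + \varphi_\ep - \tau v_0 - \varphi_0\|_{L^2(0,T;V^*)}$ is controlled by Lemmas~\ref{estiep1}--\ref{estiep2}, giving $\|{\cal N}\hbox{-part of }w_\ep\|_{L^2(0,T;V)}\leq C$; then $(w_\ep)_t$ in $L^2(0,T;V^*)$ follows by differentiating in time and using again the $L^2(0,T;V^*)$-bound on $(\varphi_\ep)_t$. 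The mean value $m(\mu_\ep)$ is recovered by taking the spatial mean in~\eqref{dfPtauetaepsol3}: $m(\mu_\ep) = \eta\,m((\varphi_\ep)_t) + m(\beta_\ep(\varphi_\ep)) + m(\pi(\varphi_\ep)) - m(\theta_\ep)$, all bounded by Lemmas~\ref{estiep1} and~\ref{estiep3} and (C2), uniformly in $\tau,\eta,\ep$. Combining gives~\eqref{estiep4-1}.

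\textbf{The $L^\infty$ bound.} For~\eqref{estiep4-2} the same comparison is used, but now I would exploit the $L^\infty(0,T;V^*)$ bounds from Lemma~\ref{estiep1}: $\tau^{1/2}\|(\varphi_\ep)_t\|_{L^\infty(0,T;V^*)}\leq C$, hence $\|(\varphi_\ep)_t\|_{L^\infty(0,T;V^*)}\leq C\tau^{-1/2} =: C_\tau$; for $(\varphi_\ep)_{tt}$ one needs an $L^\infty(0,T;V^*)$ bound, which is \emph{not} in Lemma~\ref{estiep1} directly, so I would instead rewrite, using the second equation, $\tau(\varphi_\ep)_{tt} = \Delta\mu_\ep - (\varphi_\ep)_t$ and note that $\nabla w_\ep = -{\cal N}(\cdots)$ with argument $\tau(\varphi_\ep)_t + \varphi_\ep - \tau v_0 - \varphi_0 \in L^\infty(0,T;V^*)$ bounded by $C_\tau$ (again Lemmas~\ref{estiep1}--\ref{estiep2}); this gives $\|w_\ep - m(w_\ep)\|_{L^\infty(0,T;V)}\leq C_\tau$ and then differentiating in time yields $\|\mu_\ep - m(\mu_\ep)\|_{L^\infty(0,T;V^*)}\leq C_\tau$, while $m(\mu_\ep)$ is bounded in $L^\infty(0,T)$ by $C_\tau$ exactly as above (now using the $\tau$-dependent bound on $(\varphi_\ep)_t$).

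\textbf{Main obstacle.} The delicate point is that $(\varphi_\ep)_{tt}$ is only known in $L^\infty(0,T;V^*)$ through the $\ep,\lambda$-regularized analysis with a $\tau$-dependent constant, so for the \emph{uniform} estimate~\eqref{estiep4-1} one cannot use it; the trick is to avoid $(\varphi_\ep)_{tt}$ altogether and work at the level of $w_\ep$ (the time-integral of $\mu_\ep$), for which the second equation in the integrated form~\eqref{dfPtauetaepsol2} involves only $\varphi_\ep$ and $(\varphi_\ep)_t$ — quantities bounded uniformly — then differentiate back. Managing this split between the zero-mean part (via $\cal N$) and the mean part (via the third equation) cleanly, while keeping track of which bounds are uniform and which carry a factor $\tau^{-1/2}$, is the real content of the proof.
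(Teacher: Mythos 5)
Your plan founders on the zero--mean part of $\mu_\ep$. You correctly recover $m(\mu_\ep)$ from the third equation, and you correctly observe that the time-integrated second equation \eqref{pier12} controls $w_\ep-m(w_\ep)={\cal N}\bigl(\tau v_{0\ep}+\varphi_{0\ep}-\tau(\varphi_\ep)_t-\varphi_\ep\bigr)$ in $L^\infty(0,T;V)$ uniformly. But the step ``then differentiate back in time'' is exactly where the argument collapses: differentiating ${\cal N}(G)$ gives ${\cal N}(G_t)$ with $G_t=-\tau(\varphi_\ep)_{tt}-(\varphi_\ep)_t$, so the second time derivative you set out to avoid reappears, and an $L^\infty(0,T;V)$ bound on a function never yields any bound on its time derivative. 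The only uniform control of $\tau(\varphi_\ep)_{tt}+(\varphi_\ep)_t$ is through $\Delta\mu_\ep$ itself, which is circular; the $V^*$-bound on $(\varphi_\ep)_{tt}$ from Lemma~\ref{estieplam8} carries a constant $C_{\tau,\eta,\ep}$ and is useless here even for \eqref{estiep4-2}. In short, the second equation only sees $\mu_\ep$ through its Laplacian, and you cannot undo that at the required regularity with the estimates available.

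The way out --- and the paper's actual proof --- is to read $\mu_\ep=(w_\ep)_t$ directly off the \emph{third} equation \eqref{dfPtauetaepsol3}, where it appears undifferentiated:
\begin{equation*}
\mu_\ep \;=\; \eta(\varphi_\ep)_t \;-\;\Delta\varphi_\ep\;+\;\beta_\ep(\varphi_\ep)\;+\;\pi(\varphi_\ep)\;-\;\theta_\ep \qquad\hbox{in }V^*.
\end{equation*}
Here $-\Delta\varphi_\ep$ is bounded in $L^\infty(0,T;V^*)$ by $\|\nabla\varphi_\ep\|_{L^\infty(0,T;H)}\leq C$ (Lemma~\ref{estiep1}), $\beta_\ep(\varphi_\ep)$ is bounded in $L^\infty(0,T;L^q(\Omega))\hookrightarrow L^\infty(0,T;V^*)$ (Lemma~\ref{estiep3} and Remark~\ref{rem1}), and $\pi(\varphi_\ep)$, $\theta_\ep$ are bounded in $L^\infty(0,T;H)$ by (C2) and Lemmas~\ref{estiep1}--\ref{estiep2}. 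The only term distinguishing the two claims is $\eta(\varphi_\ep)_t$: for \eqref{estiep4-1} use $\eta\|(\varphi_\ep)_t\|_{L^2(0,T;H)}\leq \eta^{1/2}C\leq C$ from Lemma~\ref{estiep1}; for \eqref{estiep4-2} use $\eta\|(\varphi_\ep)_t\|_{L^\infty(0,T;V^*)}\leq C\tau^{-1/2}=C_\tau$. This also explains, more transparently than your split, why the $L^\infty$-in-time bound is the one that costs a factor depending on $\tau$.
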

\begin{proof}
\sk{We can \pcol{make a comparison of the terms in \eqref{dfPtauetaepsol3} and use the conclusions of 
Lemmas~\ref{estiep1}-\ref{estiep3}, along with (C2) and the continuous embedding $L^q(\Omega)\hookrightarrow V^*$, to prove Lemma~\ref{estiep4}.}} 
\end{proof}

\begin{lem}\label{estiep5}
There exists a constant $C > 0$ such that 
\[
\|w_{\ep}\|_{L^{\infty}(0, T; H)\pcol{{}\cap L^2(0,T;V)}} \leq C
\]
for all \sk{$\tau, \eta, \ep \in (0, 1]$.}
\end{lem}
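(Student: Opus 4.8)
The plan is to extract the whole estimate from the second equation of \ref{Ptauetaep}, which in time-integrated form isolates $\nabla w_\ep$, and to recover the mean value of $w_\ep$ from the $V^*$-bound on $\mu_\ep$ already established in Lemma~\ref{estiep4}. First I would integrate \eqref{dfPtauetaepsol2} over $(0,t)$; using the initial conditions $(\varphi_\ep)_t(0)=v_{0\ep}$, $\varphi_\ep(0)=\varphi_{0\ep}$ and the definition $w_\ep(t)=\int_0^t\mu_\ep(s)\,ds$ (so that $\int_0^t\int_\Omega\nabla\mu_\ep\cdot\nabla v=\int_\Omega\nabla w_\ep(t)\cdot\nabla v$), this yields, for every $t\in[0,T]$ and all $v\in V$,
\[
\int_\Omega \nabla w_\ep(t)\cdot\nabla v = \langle h_\ep(t),v\rangle_{V^*,V},
\qquad
h_\ep(t):=\tau v_{0\ep}+\varphi_{0\ep}-\tau(\varphi_\ep)_t(t)-\varphi_\ep(t),
\]
which is precisely the identity \eqref{dfPtausol2} written for the $\ep$-level solution. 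By Lemmas~\ref{estiep1}--\ref{estiep2}, the boundedness of $v_{0\ep}$ and $\varphi_{0\ep}$ guaranteed by \eqref{pier3}, and the elementary estimate $\tau\|(\varphi_\ep)_t(t)\|_{V^*}=\tau^{1/2}\bigl(\tau^{1/2}\|(\varphi_\ep)_t(t)\|_{V^*}\bigr)\le C\tau^{1/2}\le C$ for $\tau\in(0,1]$, we then have $\|h_\ep\|_{L^\infty(0,T;V^*)}\le C$ with $C$ independent of $\tau,\eta,\ep$.

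Next I would test the identity above with $v=w_\ep(t)-m(w_\ep(t))$, an admissible element of $V$ (recall $w_\ep\in W^{1,\infty}(0,T;V)$) with zero mean value. Since $\nabla m(w_\ep(t))=0$, the left-hand side equals $\|\nabla w_\ep(t)\|_H^2$, while the right-hand side is bounded by $\|h_\ep(t)\|_{V^*}\,\|w_\ep(t)-m(w_\ep(t))\|_V$; estimating the last factor by $C\|\nabla w_\ep(t)\|_H$ through the Poincar\'e--Wirtinger inequality gives $\|\nabla w_\ep(t)\|_H\le C$ for a.e.\ $t\in(0,T)$, hence $\|\nabla w_\ep\|_{L^\infty(0,T;H)}\le C$.

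Finally, since $|m(\psi)|\le|\Omega|^{-1}\|\psi\|_{V^*}\|1\|_V\le C\|\psi\|_{V^*}$ for every $\psi\in V^*$ and $w_\ep(t)=\int_0^t\mu_\ep(s)\,ds$, estimate \eqref{estiep4-1} of Lemma~\ref{estiep4} yields $|m(w_\ep(t))|\le\int_0^t|m(\mu_\ep(s))|\,ds\le C\|\mu_\ep\|_{L^2(0,T;V^*)}\le C$. Using Poincar\'e--Wirtinger once more, $\|w_\ep(t)\|_H\le\|w_\ep(t)-m(w_\ep(t))\|_H+|\Omega|^{1/2}|m(w_\ep(t))|\le C\|\nabla w_\ep(t)\|_H+C\le C$, so $\|w_\ep\|_{L^\infty(0,T;H)}\le C$; together with the gradient bound this in fact gives $\|w_\ep\|_{L^\infty(0,T;V)}\le C$, which a fortiori implies the asserted bound in $L^\infty(0,T;H)\cap L^2(0,T;V)$. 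No step is genuinely delicate here: the only points needing care are keeping track of the powers of $\tau$ (so that the $\tau$-weighted bounds of Lemma~\ref{estiep1} still produce $\tau$-independent constants) and the systematic splitting of $w_\ep$ into its zero-mean part plus its mean, since the mean of $w_\ep$ is invisible to the second equation and must be retrieved from the $V^*$-bound on $\mu_\ep$.
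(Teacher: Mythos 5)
Your argument is correct, but it follows a genuinely different route from the paper's. The paper tests the third equation \eqref{dfPtauetaepsol3} by $w_\ep$ (yielding \eqref{d1}) and adds to it the identity \eqref{d2} obtained by testing the time-integrated second equation \eqref{pier12} by $w_\ep$; this produces $\tfrac12\tfrac{d}{dt}\|w_\ep\|_H^2+\|\nabla w_\ep\|_H^2=\dots$, and the stated $L^{\infty}(0,T;H)\cap L^2(0,T;V)$ bound then follows from the Young inequality, Lemmas~\ref{estiep1}--\ref{estiep3} and the Gronwall lemma. You instead use only the time-integrated second equation, tested against the zero-mean part $w_\ep-m(w_\ep)$, and recover the mean of $w_\ep$ separately from the uniform bound $\|\mu_\ep\|_{L^2(0,T;V^*)}\le C$ of Lemma~\ref{estiep4}; the Poincar\'e--Wirtinger inequality then closes the estimate without any Gronwall argument. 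This is more economical and in fact delivers the stronger pointwise-in-time bound $\|w_\ep\|_{L^{\infty}(0,T;V)}\le C$, i.e., it subsumes Lemma~\ref{estiep6}, which the paper derives in a second step from \eqref{d2} together with the present lemma. The only structural difference is your explicit reliance on Lemma~\ref{estiep4} to pin down $m(w_\ep)$ --- something the paper's choice $v=w_\ep$ sidesteps --- but that lemma is already established at this stage, and your bookkeeping of the $\tau$-weights (using $\tau\|(\varphi_\ep)_t\|_{V^*}\le\tau^{1/2}C$) correctly keeps all constants independent of $\tau,\eta,\ep$.
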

\begin{proof}
We choose $v = w_{\ep}$ in \eqref{dfPtauetaepsol3} to have that
\begin{align}\label{d1}
&\frac{1}{2}\frac{d}{dt}\|w_{\ep}(t)\|_{H}^2 
\notag \\[2mm]
&= \sk{{}- (\theta_{\ep}(t), w_{\ep}(t))_{H} 
    + \eta((\varphi_{\ep})_{t}(t), w_{\ep}(t))_{H} 
    + \int_{\Omega} \nabla\varphi_{\ep}(t) \cdot \nabla w_{\ep}(t)} 
\notag \\
&\,\quad\sk{{}+ \langle \beta_{\ep}(\varphi_{\ep}(t)), w_{\ep} \pcol{(t)}\rangle_{V^*, V} 
    + (\pi(\varphi_{\ep}(t)), w_{\ep}(t))_{H}.}
\end{align}
\pcol{Integration with respect to time of \eqref{dfPtauetaepsol2} gives 
\begin{equation}
\tau(\varphi_{\ep})_{t}(t) + \varphi_{\ep}(t) - \Delta_{N} w_{\ep}(t) 
= \tau v_{0\ep} + \varphi_{0\ep} \quad \hbox{for a.e. } t\in (0,T).
\label{pier12}
\end{equation}
If we test the above equation by $w_{\ep}$,
we obtain} 
\begin{align}\label{d2}
&\|\nabla w_{\ep}(t)\|_{H}^2 
\notag \\[2mm]
&= \pcol{{}- \tau\langle (\varphi_{\ep})_{t}(t), w_{\ep}(t)\rangle_{V^*, V} 
    - (\varphi_{\ep}(t), w_{\ep}(t))_{H} 
    + \langle \tau v_{0\ep} + \varphi_{0\ep}, w_{\ep}\pcol{(t)} \rangle_{V^*, V}}.
%\notag \\[2mm] 
%&= \sk{{}- \tau\langle (\varphi_{\ep})_{t}(t), 
%                    w_{\ep}(t) - m(w_{\ep}(t)) \rangle_{V^*, V}  
%    - \tau(m(v_{0})e^{-t/\tau}, w_{\ep}(t))_{H}}
%\notag \\
%&\,\quad\sk{{}- (\varphi_{\ep}(t), w_{\ep}(t))_{H} + \langle \tau v_{0\ep} + \varphi_{0\ep}, w_{\ep} \pcol{(t)}\rangle_{V^*, V}.}
\end{align}
Therefore, \pcol{we can show Lemma~\ref{estiep5} 
by adding \eqref{d1} and \eqref{d2}, then integrating with respect to time and using the Young inequality along with the estimates in Lemmas~\ref{estiep1}-\ref{estiep3}, finally applying the Gronwall lemma.}
\end{proof}

\begin{lem}\label{estiep6}
There exists a constant $C > 0$ such that 
\[
\pcol{\|w_{\ep}\|_{L^{\infty}(0, T; V)}} \leq C
\]
for all \sk{$\tau, \eta, \ep \in (0, 1]$.}
\end{lem}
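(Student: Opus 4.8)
The plan is to read off the bound directly from the identity \eqref{d2}, which was already obtained in the proof of Lemma~\ref{estiep5} by testing the time-integrated second equation \eqref{pier12} against $w_{\ep}(t)$:
\[
\|\nabla w_{\ep}(t)\|_{H}^2
= - \tau\langle (\varphi_{\ep})_{t}(t), w_{\ep}(t)\rangle_{V^*, V}
    - (\varphi_{\ep}(t), w_{\ep}(t))_{H}
    + \langle \tau v_{0\ep} + \varphi_{0\ep}, w_{\ep}(t) \rangle_{V^*, V}
\qquad\text{for a.a.\ } t \in (0, T).
\]
Since Lemma~\ref{estiep5} already provides the $L^{\infty}(0,T;H)$-bound on $w_{\ep}$, it suffices to estimate the right-hand side by $C\,\|w_{\ep}(t)\|_{V} + C$ and then to absorb the gradient of $w_{\ep}$ into the left-hand side.

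First I would note that Lemma~\ref{estiep1} yields
\[
\tau\,\|(\varphi_{\ep})_{t}\|_{L^{\infty}(0, T; V^*)}
= \tau^{1/2}\Bigl( \tau^{1/2}\|(\varphi_{\ep})_{t}\|_{L^{\infty}(0, T; V^*)} \Bigr)
\leq C
\]
uniformly in $\tau, \eta, \ep \in (0,1]$; combined with the boundedness of $v_{0\ep}$ in $V^*$ and of $\varphi_{0\ep}$ in $V$ (hence in $V^*$) coming from \eqref{pier3}, this bounds the first and the third terms on the right-hand side by $C\,\|w_{\ep}(t)\|_{V}$. The middle term is controlled simply by $\|\varphi_{\ep}(t)\|_{H}\,\|w_{\ep}(t)\|_{H} \leq C$, using Lemmas~\ref{estiep2} and~\ref{estiep5}. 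Altogether this gives $\|\nabla w_{\ep}(t)\|_{H}^2 \leq C\,\|w_{\ep}(t)\|_{V} + C$ for a.a.\ $t \in (0,T)$.

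To conclude, I would write $\|w_{\ep}(t)\|_{V}^2 = \|\nabla w_{\ep}(t)\|_{H}^2 + \|w_{\ep}(t)\|_{H}^2$ and invoke once more Lemma~\ref{estiep5}, so that the previous inequality becomes $\|\nabla w_{\ep}(t)\|_{H}^2 \leq C\,\|\nabla w_{\ep}(t)\|_{H} + C$; a single application of the Young inequality absorbs the first term on the right and gives $\|\nabla w_{\ep}(t)\|_{H} \leq C$ for a.a.\ $t \in (0,T)$, which together with Lemma~\ref{estiep5} is the assertion. I do not expect any genuine obstacle here; the only delicate point is keeping the constant independent of $\tau$, which is precisely why one transfers a factor $\tau^{1/2}$ from the inertial term onto the uniformly bounded quantity $\tau^{1/2}(\varphi_{\ep})_{t}$ rather than estimating $(\varphi_{\ep})_{t}$ alone.
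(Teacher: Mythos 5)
Your proposal is correct and follows the paper's own argument: the paper likewise derives the bound directly from \eqref{d2} via the Young inequality together with the estimates of Lemmas~\ref{estiep1} and~\ref{estiep5} (and, implicitly, Lemma~\ref{estiep2} for $\|\varphi_{\ep}\|_{L^{\infty}(0,T;H)}$, which you correctly cite). Your handling of the inertial term, transferring $\tau^{1/2}$ onto the uniformly bounded quantity $\tau^{1/2}(\varphi_{\ep})_{t}$ so that the constant stays independent of $\tau$, is exactly the right touch.
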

\begin{proof}
\pcol{The assertion follows from \eqref{d2}, thanks to the Young inequality and Lem\-mas~\ref{estiep1} 
and~\ref{estiep5}}.
\end{proof}

\begin{lem}\label{estiep7}
\pcol{There exists a constant $C > 0$ such that } 
\begin{align*}
\pcol{\|(\theta_{\ep})_{t}\|_{L^2(0, T; V^*)} \leq C}    
\end{align*}
for all  \pcol{$\tau, \eta, \ep \in (0, 1]$.}
\end{lem}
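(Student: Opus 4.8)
The plan is to obtain the bound by a direct comparison in the first equation of~\ref{Ptauetaep}, rewritten in weak form. Recalling~\eqref{pier5}, the identity~\eqref{dfPtauetaepsol1} is equivalent to
\[
\langle (\theta_{\ep})_{t}(t), v \rangle_{V^{*}, V}
+ \int_{\Omega} \nabla \theta_{\ep}(t) \cdot \nabla v
= \langle f_{\ep}(t) - (\varphi_{\ep})_{t}(t), v \rangle_{V^{*}, V}
\quad \text{for all } v \in V,
\]
for a.a.\ $t \in (0,T)$, where I regard $f_{\ep}(t) \in H \hookrightarrow V^{*}$. Solving for the duality term and estimating the gradient contribution by the Schwarz inequality, I would arrive at
\[
|\langle (\theta_{\ep})_{t}(t), v \rangle_{V^{*}, V}|
\leq \bigl( \|f_{\ep}(t)\|_{V^{*}} + \|(\varphi_{\ep})_{t}(t)\|_{V^{*}} + \|\nabla \theta_{\ep}(t)\|_{H} \bigr) \|v\|_{V},
\]
and, since $(\theta_{\ep})_{t}(t) \in H \subset V^{*}$ by~\eqref{epsp1}, taking the supremum over $v$ in the unit ball of $V$ yields a pointwise-in-time estimate of $\|(\theta_{\ep})_{t}(t)\|_{V^{*}}$ by the three quantities above.

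Next I would square this inequality, integrate over $(0,T)$, and invoke the uniform bounds already at hand: $\|\nabla \theta_{\ep}\|_{L^2(0,T;H)} \leq C$ and $\|(\varphi_{\ep})_{t}\|_{L^2(0,T;V^{*})} \leq C$ from Lemma~\ref{estiep1}, together with $\|f_{\ep}\|_{L^2(0,T;V^{*})} \leq C$, which follows from~\eqref{pier2} because the $f_{\ep}$ converge in $L^2(0,T;V^{*})$ and hence form a bounded family there. This immediately gives
\[
\|(\theta_{\ep})_{t}\|_{L^2(0,T;V^{*})}^{2} \leq C,
\]
uniformly in $\tau, \eta, \ep \in (0,1]$.

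The only delicate point is to refrain from using the $H$-norm of $f_{\ep}$: although $f_{\ep} \in C^{0}([0,T];H)$, its $L^{\infty}(0,T;H)$ norm is not controlled uniformly in $\ep$, so the term $(f_{\ep}(t),v)_{H}$ must be treated as a $V^{*}$--$V$ pairing throughout. Apart from that bookkeeping, the argument is entirely routine — a comparison of terms in a linear parabolic equation — so I do not expect any genuine obstacle.
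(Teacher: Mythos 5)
Your argument is correct and coincides with the paper's own proof: a comparison of terms in the weak form of \eqref{dfPtauetaepsol1}, using the bounds on $\|\nabla\theta_{\ep}\|_{L^2(0,T;H)}$ and $\|(\varphi_{\ep})_{t}\|_{L^2(0,T;V^*)}$ from Lemma~\ref{estiep1} together with the boundedness of $\{f_{\ep}\}$ in $L^2(0,T;V^*)$ guaranteed by \eqref{pier2}. Your cautionary remark about estimating $f_{\ep}$ in the $V^*$-norm rather than the $H$-norm is exactly the right bookkeeping and matches the paper's reliance on the $L^2(0,T;V^*)$ bound.
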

\begin{proof}
\pcol{By comparing terms in~\eqref{dfPtauetaepsol1}, the above estimate results from Lemma~\ref{estiep1} 
and the boundedness of $\|f_{\ep}\|_{L^{2}(0, T; V^*)}$ from~\eqref{pier2}.}
\end{proof}

\begin{prth2.1}
\sk{Let $\tau, \eta \in (0, 1]$.} Then, \pcol{owing to} Lemmas~\ref{estiep1}-\ref{estiep7}, 
the compact embeddings $V \hookrightarrow H \hookrightarrow V^*$, 
the Aubin--Lions lemma \pcol{(see, e.g., \cite[Sect.~8,~Cor.~4]{Simon})}, 
there exist some functions 
%\sk{$\theta_{\eta}$, $w_{\eta}$, $\mu_{\eta}$, $\varphi_{\eta}$, $\xi_{\eta}$} 
%such that 
\begin{align*}
&\theta_{\eta} \in H^1(0, T; V^*) \cap L^{\infty}(0, T; H) \cap L^2(0, T; V), \\
&w_{\eta} \in \pcol{W^{1, \infty}(0, T; V^{*})} \cap L^{\infty}(0, T; V), \\
&\mu_{\eta} \in \pcol{L^{\infty}(0, T; V^*)}, \\
&\varphi_{\eta} \in W^{1, \infty}(0, T; V^{*}) \cap \sk{H^{1}(0, T; H)} 
                                                                \cap L^{\infty}(0, T; V), \\
&\xi_{\eta} \in L^{\infty}(0, T; L^q(\Omega)),
\end{align*}
\pcol{such that}
\begin{align}
&\theta_{\ep} \to \theta_{\eta} 
\quad \mbox{weakly* in}\ H^1(0, T; V^*) \cap L^{\infty}(0, T; H) \cap L^2(0, T; V), 
\label{wc1} \\
&\theta_{\ep} \to \theta_{\eta} 
\quad \mbox{strongly in}\ \pcol{C^0}([0, T]; \pcol{V^*}) \pcol{{}\cap L^2(0,T;H)} 
\label{st1} \\[2mm]
&\varphi_{\ep} \to \varphi_{\eta} 
\quad \mbox{weakly* in}\ W^{1, \infty}(0, T; V^{*}) \cap \sk{H^{1}(0, T; H)} 
                                                                          \cap L^{\infty}(0, T; V), 
\label{wc2} \\
&\varphi_{\ep} \to \varphi_{\eta} 
\quad \mbox{strongly in}\ \pcol{C^0}([0, T]; H), 
\label{st2} \\
&\beta_{\ep}(\varphi_{\ep}) \to \xi_{\eta} 
\quad \mbox{weakly* in}\ L^{\infty}(0, T; L^q(\Omega)), 
\label{wc3} \\[2mm]
&w_{\ep} \to w_{\eta} 
\quad \mbox{weakly* in}\ L^{\infty}(0, T; V), 
\label{wc4} \\
&(w_{\ep})_{t}\ (= \mu_{\ep}) \to \mu_{\eta} \ \pcol{(=(w_{\eta})_{t})}
\quad \pcol{\mbox{weakly in}\ L^{\infty}(0, T; V^*)}, 
\label{wc5} \\
&w_{\ep} \to w_{\eta} 
\quad \mbox{strongly in}\ \pcol{C^0}([0, T]; H)
\label{st3}
\end{align}
\pcol{along a subsequence of~$\ep = \ep_{j} \searrow 0$. Now, we have to show that 
$(\theta_{\eta}, \mu_{\eta},\varphi_{\eta})$ is a weak solution of the problem~\ref{Ptaueta}.}

\pcol{To this aim, we note that from \eqref{st2}, \sk{the property of $J_{\ep}^{\beta}$}, 
the continuity of $\beta$ it follows that} 
\sk{\[
  \beta_{\ep_j}(\varphi_{\ep_j})\ (= \beta(J_{\ep_j}^{\beta}(\varphi_{\ep_j})))\to \beta(\varphi_{\eta}) \quad \mbox{a.e.\ in}\ \Omega\times(0, T)
\]}%
as $\ep_{j} = \ep_{j_{k}} \searrow 0$.  
Then the Severini--Egorov theorem and Lemma \ref{estiep3} 
yield that 
\sk{\begin{equation}\label{wc6}
  \beta_{\ep_j}(\varphi_{\ep_j})\to \beta(\varphi_{\eta}) \quad \text{weakly in } L^q(\Omega\times(0,T))
\end{equation}}%
and 
\sk{\begin{equation}\label{st4}
  \beta_{\ep_j}(\varphi_{\ep_j})\to \beta(\varphi_{\eta}) \quad \text{strongly in } L^p(\Omega\times(0,T))
  \quad\forall\,p\in[1,q)
\end{equation}}%
as $\ep = \ep_{j_{k}} \searrow 0$ 
(the reader may see \cite[Lemme~1.3, p.12]{Lio69} for some detail). 
Thus we deduce from \eqref{wc3} and \eqref{wc6} that 
\sk{\begin{equation}\label{xibeta}
\xi_{\eta} = \beta(\varphi_{\eta})  \quad \mbox{a.e.\ in}\ \Omega\times(0, T).
\end{equation}}%
\pcol{At this point, we can multiply \eqref{dfPtauetaepsol1} by an arbitrary $v\in L^2 (0,T;V)$
and integrate over $\Omega \times (0, T) $ to deduce that 
\begin{align*}
&\int_{0}^{T} \langle (\theta_{\eta})_{t}, v (t) \rangle_{V^{*}, V}dt  
  + \int_{0}^{T} \!\! \int_{\Omega}\nabla\theta_{\eta}\cdot\nabla v 
= \int_{0}^{T}\langle (f_\ep  - (\varphi_{\ep})_{t}(t), v(t) \rangle_{V^{*}, V}dt  
     %  \qquad \mbox{a.e.\ on}\ (0, T) \quad \mbox{for all}\ v \in V, 
\end{align*}   
whence \eqref{dfPtauetaepsol1} follows as $\ep = \ep_{j} \searrow 0$, thanks to \eqref{wc1}, \eqref{wc2} and \eqref{pier2}. Note that the formulation with test function $v\in L^2 (0,T;V)$
and the formulation a.e. in $(0,T)$ for all $v\in  V$ are equivalent.}

\pcol{A similar procedure applied to \eqref{pier12}, which is the time-integrated version of \eqref{dfPtauetaepsol2}, leads to \eqref{dfPtauetasol2}: in this step we also use the convergence~\eqref{pier3} of the initial values. Besides, in view of \eqref{wc3} and \eqref{xibeta} 
no trouble arises in the passage 
to the limit in \eqref{dfPtauetaepsol3}, so that we arrive at \eqref{dfPtauetasol3}. On the other hand, 
the initial conditions~\eqref{dfPtauetasol4} are a plain consequence of 
\eqref{dfPtauetaepsol4}, \eqref{st1}, \eqref{st2} and \eqref{pier3}.}

\pcol{Moreover, we point out that the regularity \eqref{etasp2}, i.e., $\mu_{\eta}= (w_{\eta})_t \in \pcol{L^{\infty}(0, T; V^*)}$, follows from a comparison of all terms in the variational equality~\eqref{dfPtauetaepsol3}, as in particular $\eta (\varphi_{\eta})_t \in L^\infty (0, T; V^*)$ by \eqref{wc2}.}

\pcol{Therefore, Theorem~\ref{maintheorem1} is completely proved.} 
\qed
\end{prth2.1}

%%==============================================================%%
%%==============                                  ==============%%
%%======                      Section7                    ======%%
%%====                                                      ====%%
%%==                                                          ==%%
%%====                                                      ====%%
%%======                                                  ======%%
%%==============                                  ==============%%
%%==============================================================%%

\section{\pcol{Asymptotic results}}
\label{Sec7}

This section is devoted to the asymptotic convergences as the two main parameters $\tau, \eta$ tend to zero. 

\subsection{\pcol{Passage to the limit as $\eta\searrow0$}}
\label{Sec7.1}

\sk{\pcol{First, we recall the a priori estimates} for \ref{Ptaueta} 
and prove Theorem~\ref{maintheorem1} 
by passing to the limit in \ref{Ptaueta} as $\eta \searrow 0$.}
\sk{\begin{lem}\label{estieta1}
There exists a constant $C > 0$ such that 
\begin{align*}
&\pcol{\|\theta_{\eta}\|_{L^{\infty}(0, T; H)\cap L^2(0,T;V) } }
% + \|\nabla\theta_{\eta}\|_{L^2(0, T; H)} 
+ \eta^{1/2}\|(\varphi_{\eta})_{t}\|_{L^2(0, T; H)} 
+ \tau^{1/2}\|(\varphi_{\eta})_{t}\|_{L^{\infty}(0, T; V^{*})} 
\\[1mm]
&+ \pcol{\|(\varphi_{\eta})_{t}\|_{L^2(0, T; V^{*})}} + \|\varphi_{\eta}\|_{L^{\infty}(0, T; V)} 
+ \|\beta(\varphi_{\eta})\|_{L^{\infty}(0, T; L^q(\Omega))} 
\leq C
\end{align*}
for all $\tau, \eta \in (0, 1]$.
\end{lem}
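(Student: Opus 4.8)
The plan is to deduce all the bounds collected in Lemma~\ref{estieta1} from the estimates already proved at the regularized level, rather than re-running an energy argument directly on \ref{Ptaueta}. The key observation is that the weak solution $(\theta_{\eta},\mu_{\eta},\varphi_{\eta})$ of \ref{Ptaueta} is precisely the one constructed in the proof of Theorem~\ref{maintheorem1} as a limit, along a suitable subsequence $\ep=\ep_{j}\searrow0$, of solutions $(\theta_{\ep},\mu_{\ep},\varphi_{\ep})$ of \ref{Ptauetaep}. Since the constants in Lemmas~\ref{estiep1}, \ref{estiep2} and \ref{estiep3} depend only on the data — in particular they are independent of $\tau$, $\eta$ and $\ep$ — weak and weak-star lower semicontinuity of the norms, applied along the convergences \eqref{wc1}--\eqref{st3}, carries each of those bounds over to the limit with a constant that is still independent of $\tau$ and $\eta$.

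In detail, I would argue term by term. The bound on $\|\theta_{\eta}\|_{L^{\infty}(0,T;H)\cap L^{2}(0,T;V)}$ is inherited from the $L^{\infty}(0,T;H)$ and $L^{2}(0,T;H)$-gradient bounds for $\theta_{\ep}$ in Lemma~\ref{estiep1} via \eqref{wc1}. The three bounds involving $(\varphi_{\eta})_{t}$ — that is, $\eta^{1/2}\|(\varphi_{\eta})_{t}\|_{L^{2}(0,T;H)}$, $\tau^{1/2}\|(\varphi_{\eta})_{t}\|_{L^{\infty}(0,T;V^{*})}$ and $\|(\varphi_{\eta})_{t}\|_{L^{2}(0,T;V^{*})}$ — follow from the corresponding $\ep$-uniform bounds in Lemma~\ref{estiep1} and the convergence \eqref{wc2}; note that, since $\eta$ stays fixed in this passage to the limit, the $\eta^{1/2}$-weighted inequality survives. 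The estimate $\|\varphi_{\eta}\|_{L^{\infty}(0,T;V)}\le C$ comes from Lemma~\ref{estiep2} and \eqref{wc2}. Finally, $\|\beta(\varphi_{\eta})\|_{L^{\infty}(0,T;L^{q}(\Omega))}\le C$ results from Lemma~\ref{estiep3}, the weak-star convergence \eqref{wc3} of $\beta_{\ep}(\varphi_{\ep})$ to $\xi_{\eta}$ in $L^{\infty}(0,T;L^{q}(\Omega))$, and the identification $\xi_{\eta}=\beta(\varphi_{\eta})$ established in \eqref{xibeta}.

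I expect the only real subtlety — and the reason why this recollection of the estimates is carried out through the limit rather than directly — to be the following. The weak solution $\varphi_{\eta}$ only has the regularity $W^{1,\infty}(0,T;V^{*})\cap H^{1}(0,T;H)\cap L^{\infty}(0,T;V)$ and, in particular, no second time derivative; hence the natural multipliers for the momentum and order-parameter equations, namely $\mathcal{N}\bigl((\varphi_{\eta})_{t}-m(v_{0})e^{-t/\tau}\bigr)$ and $(\varphi_{\eta})_{t}-m(v_{0})e^{-t/\tau}$, are not admissible test functions in the weak formulation \eqref{dfPtauetasol1}--\eqref{dfPtauetasol3}, and the chain rule $\frac{d}{dt}\int_{\Omega}\widehat\beta(\varphi_{\eta})=\int_{\Omega}\beta(\varphi_{\eta})(\varphi_{\eta})_{t}$ would need its own justification. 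Working at the level of \ref{Ptauetaep}, where $\varphi_{\ep}\in W^{2,\infty}(0,T;V^{*})$ and $\beta_{\ep}$ is smooth, removes all these difficulties, and then lower semicontinuity does the rest. (If a self-contained argument were preferred, one could instead run the estimate directly on \ref{Ptaueta} by a density/approximation procedure, using mean-value identities of the type \eqref{pier10}--\eqref{pier11} and the absolute continuity of $t\mapsto\int_{\Omega}\widehat\beta(\varphi_{\eta}(t))$ guaranteed by (C1) and (C3); this works but is more delicate, and it is not needed for our purposes.) The uniformity in $\tau$ and $\eta$, which is the whole point of the lemma, then comes for free, since the $\ep$-level constants were already uniform in all three parameters.
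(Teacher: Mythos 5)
Your proposal is correct and follows essentially the same route as the paper: the authors likewise obtain all the bounds by combining the $\ep$-uniform estimates of Lemmas~\ref{estiep1}--\ref{estiep3} with the weak and weak* lower semicontinuity of norms along the convergences established in the proof of Theorem~\ref{maintheorem1} (including the identification $\xi_{\eta}=\beta(\varphi_{\eta})$ for the last term). Your additional remark on why one does not run the energy estimate directly on \ref{Ptaueta} is a sensible elaboration but not part of the paper's argument.
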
}%
\sk{\begin{proof}
\pcol{This lemma holds true thanks to Lemmas~\ref{estiep1}-\ref{estiep3} and to the weak or weak* lower semicontinuity of norms in the passage to the limit of the previous section.}
\end{proof}}%
\sk{\begin{lem}\label{estieta2}
\pcol{There exists a constant $C > 0$ such that}
\begin{align}\label{estieta2-1}
\pcol{\|\mu_{\eta}\|_{L^2(0, T; V^*)} 
+ \|w_{\eta}\|_{L^{\infty}(0, T; V)} +{}}
\|(\theta_{\eta})_{t}\|_{L^2(0, T; V^*)} \leq C   
\end{align}
\pcol{for all $\tau, \eta \in (0, 1]$.}
Also, for all $\tau \in (0, 1]$ there exists a constant $C_{\tau} > 0$ such that 
\begin{align}\label{estieta2-2}
\|\mu_{\eta}\|_{L^{\infty}(0, T; V^*)} \leq C_{\tau}
\end{align}
for all $\eta \in (0, 1]$.
\end{lem}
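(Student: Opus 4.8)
\textbf{Proof plan for Lemma~\ref{estieta2}.}
The plan is to obtain all the bounds in \eqref{estieta2-1}--\eqref{estieta2-2} by passing to the limit (in the lower-semicontinuity sense) in the corresponding $\ep$-level estimates of Section~\ref{Sec6}, combined with a few direct comparisons in the weak formulation~\eqref{dfPtauetasol1}--\eqref{dfPtauetasol3}. Since Lemma~\ref{estiep4} already gives $\|\mu_\ep\|_{L^2(0,T;V^*)}\le C$ uniformly in $\tau,\eta,\ep$ and $\|\mu_\ep\|_{L^\infty(0,T;V^*)}\le C_\tau$ uniformly in $\eta,\ep$, and since $\mu_\ep=(w_\ep)_t \to \mu_\eta=(w_\eta)_t$ weakly in $L^\infty(0,T;V^*)$ by \eqref{wc5} (hence also weakly in $L^2(0,T;V^*)$), the weak and weak* lower semicontinuity of the respective norms yields $\|\mu_\eta\|_{L^2(0,T;V^*)}\le C$ and $\|\mu_\eta\|_{L^\infty(0,T;V^*)}\le C_\tau$. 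In the same way, $\|w_\eta\|_{L^\infty(0,T;V)}\le C$ is inherited from Lemma~\ref{estiep6} via the convergence \eqref{wc4}.

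First I would dispose of the bound on $(\theta_\eta)_t$. By a comparison of terms in \eqref{dfPtauetasol1}, one has $(\theta_\eta)_t = f - (\varphi_\eta)_t + \Delta_N\theta_\eta$ in the sense of $V^*$-valued distributions, so
\[
\|(\theta_\eta)_t\|_{L^2(0,T;V^*)} \le \|f\|_{L^2(0,T;V^*)} + \|(\varphi_\eta)_t\|_{L^2(0,T;V^*)} + \|\theta_\eta\|_{L^2(0,T;V)} .
\]
The three terms on the right are controlled by (C3) and by Lemma~\ref{estieta1}, which gives the estimate on $(\theta_\eta)_t$ uniformly in $\tau,\eta\in(0,1]$. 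Alternatively, one may simply invoke the lower semicontinuity argument applied to the bound of Lemma~\ref{estiep7} together with the convergence~\eqref{wc1}. Either route is routine.

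The only point that requires a little care is making sure that the $\ep$-uniformity and the $\tau$-(non)uniformity are correctly tracked when passing to the limit: the $L^2(0,T;V^*)$ bound on $\mu_\ep$ in \eqref{estiep4-1} is genuinely independent of $\tau$, whereas the $L^\infty(0,T;V^*)$ bound in \eqref{estiep4-2} carries the constant $C_\tau$, and this distinction must survive the limit $\ep\searrow0$ — which it does, since $\tau$ is fixed throughout that passage. I do not anticipate a real obstacle here; the main (mild) subtlety is just bookkeeping of the constants, exactly as in the proof of Lemma~\ref{estieta1}, so the proof amounts to: (i) cite the $\ep$-level Lemmas~\ref{estiep4}, \ref{estiep6}, \ref{estiep7}; (ii) invoke the weak/weak* convergences \eqref{wc1}, \eqref{wc4}, \eqref{wc5} established in the proof of Theorem~\ref{maintheorem1}; (iii) apply lower semicontinuity of the norms. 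This will complete the proof of Lemma~\ref{estieta2}.
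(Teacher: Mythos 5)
Your proposal is correct and follows essentially the same route as the paper: the paper's proof simply recalls Lemmas~\ref{estiep4}--\ref{estiep7} and the weak/weak* lower semicontinuity of norms under the convergences established in the proof of Theorem~\ref{maintheorem1}. Your additional direct comparison in \eqref{dfPtauetasol1} for $(\theta_\eta)_t$ and your remark on tracking the $\tau$-dependence of the constant in \eqref{estieta2-2} are both sound but not needed beyond what the paper does.
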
}%
\sk{\begin{proof}
\pcol{It suffices to recall Lemmas~\ref{estiep4}-\ref{estiep7} and, still, the weak or weak* lower semicontinuity of norms.}
\end{proof}}%
\sk{\begin{prth2.2}
We can prove Theorem~\ref{maintheorem2} \pcol{arguing 
as in the proof of Theorem~\ref{maintheorem1}, by passing to the limit for a subsequence of $\eta = \eta_j$ tending to $0$ and finding a weak solution $(\theta_{\tau}, \mu_{\tau},\varphi_{\tau})$ to the problem~\ref{Ptau}. The only difference is that now the convergence in \eqref{wc2} is replaced by 
\begin{align*}
&\varphi_{\eta} \to \varphi_{\tau} 
\quad \mbox{weakly* in}\ W^{1, \infty}(0, T; V^{*}) \cap L^{\infty}(0, T; V) \\
&\hbox{and} \quad \eta \varphi_{\eta} \to 0
\quad \mbox{strongly in}\ \pcol{H^{1}(0, T; H)}, 
\end{align*}
but the strong convergence $\varphi_{\eta} \to \varphi_{\tau} $
in $\pcol{C^0}([0, T]; H)$ remains true also in this case. Everything else works as before.} 
\qed 
\end{prth2.2}}
%%==============================================================%%
%%==============                                  ==============%%
%%======                      Section8                    ======%%
%%====                                                      ====%%
%%==                                                          ==%%
%%====                                                      ====%%
%%======                                                  ======%%
%%==============                                  ==============%%
%%==============================================================%%

\subsection{Convergence as $\tau \searrow 0$} 
\label{Sec7.2}

\pcol{Now, we fix $\eta$ and either we let $\eta \in (0, 1]$ or $\eta= 0$. Hence,   
we also introduce the problem
\begin{equation}\label{Pzeroeta}\tag*{(P)$_{0\eta}$}
\begin{cases}
\theta_t + \varphi_t - \Delta\theta = f  & \mbox{in}\ \Omega\times(0, T), \\[1mm]
\varphi_{t} - \Delta\mu = 0 
& \mbox{in}\ \Omega\times(0, T), \\[1mm]
\eta\varphi_{t} - \Delta\varphi + \beta(\varphi) + \pi(\varphi) = \mu + \theta 
& \mbox{in}\ \Omega\times(0, T), \\[1mm]
\partial_{\nu} \theta =  \partial_{\nu} \mu = \partial_{\nu} \varphi = 0
& \mbox{on}\ \partial\Omega\times(0, T), \\[1mm]
\theta(0) = \theta_0,\ \varphi(0) = \varphi_0 
& \mbox{in}\ \Omega,  
\end{cases}
\end{equation}
and define weak solutions as follows.} 
\sk{\begin{df}
A triplet $(\theta, \mu, \varphi)$ with 
\begin{align}
&\theta \in H^1(0, T; V^*) \cap L^{\infty}(0, T; H) \cap L^2(0, T; V), \label{pier13}
\\
&\displaystyle t \mapsto  w(t):=\int_{0}^{t} \mu(s)\,ds \ \hbox{ is in }\ 
\pcol{H^1(0, T; V)},%\cap L^{\infty}(0, T; V),
\label{pier14}
\\
&\pcol{\varphi \in H^{1}(0, T; V^*) \cap L^{\infty}(0, T; V), \qquad \eta \hskip1pt\varphi \in H^{1}(0, T; H),}  \label{pier15}
\\[1mm]
&\beta(\varphi) \in L^{\infty}(0, T; L^q(\Omega))  \label{pier16}
\end{align}
is called a {\it weak solution} of \ref{Pzeroeta} if 
\begin{align}
&\langle \theta_{t}, v \rangle_{V^{*}, V} 
  + \int_{\Omega}\nabla\theta\cdot\nabla v 
\notag \\ 
  &= \langle f - \varphi_{t}, v \rangle_{V^{*}, V} 
       \qquad \mbox{a.e.\ on}\ (0, T) \quad \mbox{for all}\ v \in V,  \label{pier17}
\\[4mm] 
&\pcol{\langle \varphi_t, v \rangle_{V^{*}, V} 
   + \int_{\Omega}\nabla w_t \cdot \nabla v = 0}
     \qquad \mbox{a.e.\ on}\ (0, T) \quad \mbox{for all}\ v \in V,  \label{pier18}
\\[4mm] 
&\eta(\varphi_{t}, v)_{H} 
+ \int_{\Omega} \nabla \varphi \cdot \nabla v 
  + \langle \beta(\varphi), v \rangle_{V^{*}, V}  
\notag \\ 
&= \langle w_{t}, v \rangle_{V^{*}, V} 
     + (\theta - \pi(\varphi), v)_{H}
         \qquad \mbox{a.e.\ on}\ (0, T) \quad \mbox{for all}\ v \in V,   \label{pier19}
\\[4mm]
&\theta(0) = \theta_{0},\ \varphi(0) = \varphi_{0}  \label{pier20}
        \quad \mbox{a.e.\ in}\ \Omega.
\end{align}
\end{df}}%

\bigskip

\pcol{Next, we let $(\theta_{\tau}, \mu_{\tau}, \varphi_{\tau})$ 
be either the weak solution of the problem \ref{Ptaueta} found by Theorem~\ref{maintheorem1},
if $\eta \in (0, 1]$, or the weak solution of the problem \ref{Ptau} found by Theorem~\ref{maintheorem2},
if $\eta=0$.
Then, we can state the following properties.  
\begin{lem}\label{estitau1}
There exists a constant $C > 0$ such that 
\begin{align*}
&\pcol{\|\theta_{\tau}\|_{H^1 (0,T;V^*) \cap L^{\infty}(0, T; H)\cap L^2(0,T;V) } }
% + \|\nabla\theta_{\tau}\|_{L^2(0, T; H)} 
+ \eta^{1/2}\|(\varphi_{\tau})_{t}\|_{L^2(0, T; H)} 
\\[1mm]
&+ \tau^{1/2}\|(\varphi_{\tau})_{t}\|_{L^{\infty}(0, T; V^{*})} 
+ \pcol{\|\varphi_{\tau}\|_{H^1(0, T; V^{*})\cap L^{\infty}(0, T; V)}}
\\[1mm]
&+ \|\beta(\varphi_{\tau})\|_{L^{\infty}(0, T; L^q(\Omega))} 
+\|\mu_{\tau}\|_{L^2(0, T; V^*)} 
+ \|w_{\tau}\|_{L^{\infty}(0, T; V)} \leq C   
\end{align*}
\pcol{for all $\tau \in (0, 1]$.}
\end{lem}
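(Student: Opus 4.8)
The plan is to derive uniform-in-$\tau$ estimates for $(\theta_{\tau}, \mu_{\tau}, \varphi_{\tau})$ by repeating, at the level of the weak formulations \eqref{dfPtauetasol1}--\eqref{dfPtauetasol3} (or \eqref{dfPtausol1}--\eqref{dfPtausol3} when $\eta=0$), the same energy identity already used to prove Lemma~\ref{estiep1} and Lemma~\ref{estieta1}, but now tracking carefully which bounds survive as $\tau\searrow 0$. Concretely, I would test \eqref{dfPtauetasol1} by $\theta_{\tau}(t)$, test \eqref{dfPtauetasol2} by ${\cal N}((\varphi_{\tau})_t(t) - m(v_0)e^{-t/\tau})$ using the operator ${\cal N}$ and the identity \eqref{propN}, and test \eqref{dfPtauetasol3} by $(\varphi_{\tau})_t(t) - m(v_0)e^{-t/\tau}$; summing the three identities yields the cancellation of the coupling terms $(\theta_{\tau},(\varphi_{\tau})_t)_H$ exactly as in \eqref{c1}. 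The key point is that the inertial contribution appears as $\frac{\tau}{2}\frac{d}{dt}\|(\varphi_{\tau})_t(t) - m(v_0)e^{-t/\tau}\|_*^2$, which upon integration produces the \emph{good} sign term $\frac{\tau}{2}\|(\varphi_{\tau})_t(t) - m(v_0)e^{-t/\tau}\|_*^2$ on the left and the initial contribution $\frac{\tau}{2}\|v_{0}-m(v_0)\|_*^2$ on the right, the latter being bounded independently of $\tau$ since $\tau\in(0,1]$ and $v_0\in V^*$. The right-hand side is handled by the Young inequality together with (C2), \eqref{rem3} (through $\beta$ in place of $\beta_\ep$), and the elementary bound $|m(v_0)e^{-t/\tau}|\le C$ from \eqref{a7}; the Gronwall lemma then gives the bounds on $\|\theta_{\tau}\|_{L^\infty(0,T;H)\cap L^2(0,T;V)}$, $\eta^{1/2}\|(\varphi_{\tau})_t\|_{L^2(0,T;H)}$, $\tau^{1/2}\|(\varphi_{\tau})_t\|_{L^\infty(0,T;V^*)}$, $\|\nabla\varphi_{\tau}\|_{L^\infty(0,T;H)}$, and $\|\widehat\beta(\varphi_{\tau})\|_{L^\infty(0,T;L^1(\Omega))}$, all uniform in $\tau$.

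From these, the remaining quantities follow by the same comparison arguments used in Section~\ref{Sec6}. The full $L^\infty(0,T;V)$ bound on $\varphi_{\tau}$ comes from the mean-value formula \eqref{pier10} (valid for the limit solutions as well, by taking the limit in \eqref{pier10}) together with the Poincar\'e--Wirtinger inequality, exactly as in Lemma~\ref{estiep2}; note $m(\varphi_{\tau}(t))$ stays bounded because $|m(\varphi_0)e^{-t/\tau}+m_0(1-e^{-t/\tau})|\le |m(\varphi_0)|+|m_0|\le C$ uniformly in $\tau\in(0,1]$. The $L^\infty(0,T;L^q(\Omega))$ bound on $\beta(\varphi_{\tau})$ then follows from \eqref{rem3} and the $\widehat\beta$-bound just obtained, as in Lemma~\ref{estiep3}. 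For $\mu_{\tau}=(w_{\tau})_t$ in $L^2(0,T;V^*)$, I would compare terms in \eqref{dfPtauetasol3}: all of $\nabla\varphi_{\tau}$, $\beta(\varphi_{\tau})$, $\theta_{\tau}$, $\pi(\varphi_{\tau})$ and $\eta(\varphi_{\tau})_t$ are bounded in $L^2(0,T;V^*)$ uniformly in $\tau$ (using $L^q(\Omega)\hookrightarrow V^*$ and $H\hookrightarrow V^*$), so $w_t$ is too. The $L^\infty(0,T;V)$ bound on $w_{\tau}$ is then obtained by testing the time-integrated second equation, namely $\tau(\varphi_{\tau})_t + \varphi_{\tau} - \Delta_N w_{\tau} = \tau v_0 + \varphi_0$ in $V^*$ (the analogue of \eqref{pier12}), by $w_{\tau}$: this gives $\|\nabla w_{\tau}(t)\|_H^2 = -\tau\langle(\varphi_{\tau})_t(t),w_{\tau}(t)\rangle - (\varphi_{\tau}(t),w_{\tau}(t))_H + \langle \tau v_0+\varphi_0, w_{\tau}(t)\rangle$, and since $\tau\le 1$ and $\tau^{1/2}\|(\varphi_{\tau})_t\|_{L^\infty(0,T;V^*)}\le C$ we have $\tau\|(\varphi_{\tau})_t\|_{L^\infty(0,T;V^*)}=\tau^{1/2}\cdot\tau^{1/2}\|(\varphi_{\tau})_t\|_{L^\infty(0,T;V^*)}\le C$; combining with an $L^\infty(0,T;H)$ bound on $w_{\tau}$ from testing \eqref{dfPtauetasol3} by $w_{\tau}$ as in Lemma~\ref{estiep5} and then Young's inequality, one closes the estimate by Gronwall. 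Finally, $\|(\theta_{\tau})_t\|_{L^2(0,T;V^*)}\le C$ follows by comparison in \eqref{dfPtauetasol1}, using the uniform bound on $f$ from (C3) and the bounds on $\nabla\theta_{\tau}$ and $(\varphi_{\tau})_t$ in $L^2(0,T;V^*)$.

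The main obstacle — or rather the point requiring the most care — is verifying that the inertial term does not spoil uniformity: one must be sure that the $\tau$-dependent terms appearing in the energy identity, namely $\frac{\tau}{2}\|v_0-m(v_0)\|_*^2$ on the right and the extra $\frac{\lambda\tau}{2}\frac{d}{dt}\|(\varphi_{\tau})_t\|_H^2$-type contributions (which are already absent at the $\lambda=0$ level here), are genuinely bounded uniformly, and that the factor $\tau$ multiplying $\langle(\varphi_{\tau})_t,\cdot\rangle$ in the comparison arguments can always be absorbed using $\tau\le 1$ rather than needing $\tau$ large. A second delicate point is the presence of the exponential $m(v_0)e^{-t/\tau}$, which is \emph{not} bounded in any Sobolev-in-time norm as $\tau\searrow0$ (its time derivative blows up like $1/\tau$); this is why one only tests with $(\varphi_{\tau})_t - m(v_0)e^{-t/\tau}$ and never differentiates the exponential, and why the uniform bound we extract on $(\varphi_{\tau})_t$ is only in $L^2(0,T;V^*)$ (via the dissipation term $\|(\varphi_{\tau})_t - m(v_0)e^{-t/\tau}\|_*^2$ integrated in time, together with the crude $L^2$ bound on $m(v_0)e^{-t/\tau}$) and not in $L^\infty$. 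Everything else is a faithful repetition of the estimates of Sections~\ref{Sec5}--\ref{Sec6}, and no new analytical difficulty arises.
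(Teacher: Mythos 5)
Your proposal reaches the right bounds, but it takes a from-scratch route that the paper deliberately avoids, and as written it contains a genuine gap. The central step of your argument is the energy identity producing $\frac{\tau}{2}\frac{d}{dt}\|(\varphi_{\tau})_t(t)-m(v_0)e^{-t/\tau}\|_*^2$. That term can only come from pairing the second-order-in-time equation $\tau\varphi_{tt}+\varphi_t-\Delta\mu=0$ with ${\cal N}\bigl((\varphi_{\tau})_t-m((\varphi_{\tau})_t)\bigr)$ and invoking \eqref{propN} with $v=(\varphi_{\tau})_t-m(v_0)e^{-t/\tau}$, which requires $(\varphi_{\tau})_{tt}$ to exist in $L^1(0,T;V^*)$ at least. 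The weak solution of \ref{Ptaueta} (or \ref{Ptau}) only has $\varphi_{\tau}\in W^{1,\infty}(0,T;V^*)$, and the equation actually available at this level, \eqref{dfPtauetasol2}, is the once-time-integrated form $\tau\langle(\varphi_{\tau})_t,v\rangle_{V^*,V}+(\varphi_{\tau},v)_H+\int_\Omega\nabla w_{\tau}\cdot\nabla v=\tau\langle v_0,v\rangle_{V^*,V}+(\varphi_0,v)_H$; testing \emph{it} by ${\cal N}(\cdots)$ yields $\tau\|(\varphi_{\tau})_t-m(v_0)e^{-t/\tau}\|_*^2$ plus lower-order terms, not the time derivative of that square. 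So the computation you describe cannot be performed rigorously on the limit objects; it is only legitimate at the approximation level (\ref{Ptauetaeplam} or \ref{Ptauetaep}), where $\varphi\in W^{2,\infty}(0,T;V^*)$.

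The paper's proof is precisely the repair of this: every norm appearing in Lemma~\ref{estitau1} has already been bounded by a constant independent of $\tau$, $\eta$ and $\ep$ at the approximate level — Lemmas~\ref{estiep1}--\ref{estiep7} for $\eta\in(0,1]$ and their consequences Lemmas~\ref{estieta1}--\ref{estieta2} for the $\eta\searrow0$ limit — and these bounds are simply inherited by $(\theta_{\tau},\mu_{\tau},\varphi_{\tau})$ through the weak and weak* lower semicontinuity of norms along the convergences \eqref{wc1}--\eqref{st3}. Nothing needs to be re-estimated. Your secondary steps (the mean-value formula \eqref{pier10} with Poincar\'e--Wirtinger for $\|\varphi_{\tau}\|_{L^\infty(0,T;V)}$, the growth condition for $\beta(\varphi_{\tau})$ in $L^q$, the comparison in \eqref{dfPtauetasol3} for $\mu_{\tau}$, the test of \eqref{pier12} by $w_{\tau}$, and the comparison in \eqref{dfPtauetasol1} for $(\theta_{\tau})_t$) are correct and mirror Lemmas~\ref{estiep2}--\ref{estiep7}, but they too are unnecessary once one observes that the uniform constants were already secured before passing to the limit. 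If you want to keep your direct derivation, you must either carry it out on \ref{Ptauetaep} and then let $\ep\searrow0$ (which is what the paper does), or first prove additional time regularity of $\varphi_{\tau}$ that the existence theorems do not provide.
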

\begin{proof}
\pcol{These uniform estimates follow from Lemmas~\ref{estiep1}-\ref{estiep7} if $\eta $ is positive, and  from Lemmas~\ref{estieta1}-\ref{estieta2} if $\eta=0$, of course taking into account the weak or weak star lower semicontinuity of norms.}
\end{proof}}%

\bigskip

\sk{\noindent Therefore we can prove the following asymptotic result. 
\begin{thm}\label{maintheorem4}
Assume that {\rm (C1)-(C3)} hold 
and \pcol{let the triple {\rm $(\theta_{\tau}, \mu_{\tau}, \varphi_{\tau})$} 
be as above for $\tau \in (0, 1]$.  
Then, either for  $\eta \in (0, 1]$ or for $\eta =0$,}
there exists a weak solution $(\theta, \mu, \varphi)$ 
of the problem {\rm \ref{Pzeroeta}} 
and a sequence {\rm $\{\tau_{j}\}_{j}$} such that, as $j \to \infty$, $\tau_{j} \searrow 0$ and 
\begin{align*}
\theta_{\tau_{j}} \to \theta 
\quad &\pcol{\mbox{weakly* in}\ H^1(0, T; V^*) \cap L^{\infty}(0, T; H) \cap L^2(0, T; V),}
\\
\theta_{\tau_{j}} \to \theta 
\quad &\pcol{\mbox{strongly in}\ {C^0}([0, T]; \pcol{V^*}) \pcol{{}\cap L^2(0,T;H)} } 
\\[2mm]
\tau_{j}(\varphi_{\tau_{j}})_{t} \to 0 
\quad &\mbox{strongly in}\ L^{\infty}(0, T; V^{*}), 
\\
\varphi_{\tau_{j}} \to \varphi 
\quad &\mbox{weakly* in}\ \pcol{H^{1}(0, T; V^*)} \cap L^{\infty}(0, T; V), 
\\
\pcol{\varphi_{\tau_{j}} \to \varphi}
\quad &\pcol{\mbox{strongly in}\ \pcol{C^0}([0, T]; H),}
\\
\pcol{\eta\hskip1pt \varphi_{\tau_{j}} \to \eta\hskip1pt \varphi }
\quad &\pcol{\mbox{weakly in}\ H^{1}(0, T; H),} 
\\
\beta(\varphi_{\tau_{j}}) \to \beta(\varphi) 
\quad &\mbox{weakly in}\ L^{\infty}(0, T; L^{q}(\Omega)), 
\\[2mm]
\mu_{\tau_{j}} \to \mu 
\quad &\pcol{\mbox{weakly in}\ L^2(0, T; V^{*}),} 
\\
w_{\tau_{j}} \to w 
\quad &\pcol{\mbox{weakly* in}\ L^{\infty}(0, T; V) \ \mbox{and strongly in}\ \pcol{C^0}([0, T]; H).}
\end{align*}
\end{thm}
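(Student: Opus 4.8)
The plan is a compactness-and-limit argument: extract convergent subsequences from the $\tau$-uniform bounds of Lemma~\ref{estitau1}, pass to the limit in the three variational identities defining a weak solution of \ref{Ptaueta} (or of \ref{Ptau} when $\eta=0$), and carefully treat the second equation, whose form degenerates from the integrated ``damped-wave'' identity to the parabolic Cahn--Hilliard equation as $\tau\searrow0$.

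First I would use Lemma~\ref{estitau1} to get, along a subsequence $\tau=\tau_j\searrow0$, all the weak and weak$^*$ convergences listed in the theorem; the membership of $\theta$, $\varphi$, $\eta\varphi$ and $\beta(\varphi)$ in the regularity classes \eqref{pier13}, \eqref{pier15}, \eqref{pier16} then follows from weak/weak$^*$ lower semicontinuity of norms. The strong convergences $\theta_{\tau_j}\to\theta$ in $C^0([0,T];V^*)\cap L^2(0,T;H)$, $\varphi_{\tau_j}\to\varphi$ in $C^0([0,T];H)$ and $w_{\tau_j}\to w$ in $C^0([0,T];H)$ would come from the Aubin--Lions--Simon lemma and the compact chain $V\hookrightarrow H\hookrightarrow V^*$, using the bounds for $\theta_\tau$ in $L^\infty(0,T;H)\cap L^2(0,T;V)\cap H^1(0,T;V^*)$, for $\varphi_\tau$ in $L^\infty(0,T;V)\cap H^1(0,T;V^*)$, and for $w_\tau$ in $L^\infty(0,T;V)$ with $(w_\tau)_t=\mu_\tau$ bounded in $L^2(0,T;V^*)$. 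The crucial soft remark is that $\tau_j(\varphi_{\tau_j})_t\to0$ strongly in $L^\infty(0,T;V^*)$, which is immediate from the $\tau$-independent bound on $\tau^{1/2}(\varphi_\tau)_t$, since $\|\tau(\varphi_\tau)_t\|_{L^{\infty}(0,T;V^{*})}=\tau^{1/2}\,\|\tau^{1/2}(\varphi_\tau)_t\|_{L^{\infty}(0,T;V^{*})}\le C\tau^{1/2}$. To handle the nonlinearity I would argue exactly as in the proof of Theorem~\ref{maintheorem1}: the a.e.\ convergence $\varphi_{\tau_j}\to\varphi$ (a further subsequence) together with the continuity of $\beta$ gives $\beta(\varphi_{\tau_j})\to\beta(\varphi)$ a.e., and then the Severini--Egorov theorem combined with the $L^\infty(0,T;L^q(\Omega))$ bound ($q>1$) forces the weak$^*$ limit of $\beta(\varphi_\tau)$ to be $\beta(\varphi)$.

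With all this, the passage to the limit in \eqref{pier17} and in the $\varphi$-equation \eqref{pier19} is routine: the linear terms converge weakly, $\theta_\tau\to\theta$ and $\pi(\varphi_\tau)\to\pi(\varphi)$ converge strongly in $L^2(0,T;H)$ (Lipschitz continuity of $\pi$), $\eta(\varphi_\tau)_t\rightharpoonup\eta\varphi_t$ weakly in $L^2(0,T;H)$ when $\eta>0$ (the term being absent when $\eta=0$), and $\beta(\varphi_\tau)$ converges as above. For the second equation I would start from its time-integrated form $\tau\langle(\varphi_\tau)_t,v\rangle_{V^*,V}+(\varphi_\tau,v)_H+\int_\Omega\nabla w_\tau\cdot\nabla v=\tau\langle v_0,v\rangle_{V^*,V}+(\varphi_0,v)_H$; letting $\tau_j\searrow0$ annihilates the two $\tau$-carrying terms and yields
\[
(\varphi(t),v)_H+\int_\Omega\nabla w(t)\cdot\nabla v=(\varphi_0,v)_H\qquad\text{for a.a. }t\in(0,T),\ \text{for all }v\in V.
\]
Moreover, inserting $v=1$ in the integrated equation before the limit gives $m(\varphi_\tau(t))=m(\varphi_0)e^{-t/\tau}+(\tau m(v_0)+m(\varphi_0))(1-e^{-t/\tau})$; passing to the limit and using $m(\varphi_{\tau_j})\to m(\varphi)$ uniformly on $[0,T]$, I obtain $m(\varphi(t))\equiv m(\varphi_0)$, hence $m(\varphi_t)=0$ a.e.\ in $(0,T)$.

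The hard part---and the only genuinely non-routine step---will be to recover the Cahn--Hilliard equation \eqref{pier18} and the regularity \eqref{pier14} from the displayed relation: the $\tau$-uniform estimates only control $\mu_\tau$ in $L^2(0,T;V^*)$, so the $L^2(0,T;V)$-regularity of $\mu$ in the limit problem is not inherited from the approximation and must be built a posteriori. I would do this via the operator $\cal N$. For a.a.\ $t$, the displayed identity says that $w(t)$ solves the Neumann problem \eqref{neumann} with datum $\varphi_0-\varphi(t)\in V^*$, whose mean value vanishes by the previous step; hence $w(t)-m(w(t))={\cal N}(\varphi_0-\varphi(t))$. Since $\varphi\in H^1(0,T;V^*)$ with $m(\varphi_t)=0$ and $\cal N$ is a bounded linear isomorphism onto the zero-mean subspace of $V$, the map $t\mapsto{\cal N}(\varphi_0-\varphi(t))$ lies in $H^1(0,T;V)$ with time derivative $-{\cal N}(\varphi_t)$; and choosing $v=1$ in \eqref{pier19} gives $|\Omega|\,m(\mu)=\eta\int_\Omega\varphi_t+\int_\Omega\beta(\varphi)-\int_\Omega\theta+\int_\Omega\pi(\varphi)\in L^2(0,T)$, so $m(w)(t)=\int_0^t m(\mu(s))\,ds\in H^1(0,T)$. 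Therefore $w\in H^1(0,T;V)$, and comparing its $V$- and $V^*$-valued time derivatives (which coincide by uniqueness of the distributional derivative, recalling $w_t=\mu\in L^2(0,T;V^*)$) I get $\mu=w_t=-{\cal N}(\varphi_t)+m(\mu)\in L^2(0,T;V)$, i.e.\ $\mu-m(\mu)=-{\cal N}(\varphi_t)$, which by \eqref{dadefN} is precisely \eqref{pier18}. Finally, the initial conditions \eqref{pier20} pass to the limit thanks to $\theta_{\tau_j}\to\theta$ in $C^0([0,T];V^*)$ and $\varphi_{\tau_j}\to\varphi$ in $C^0([0,T];H)$ together with $\theta_{\tau_j}(0)=\theta_0$, $\varphi_{\tau_j}(0)=\varphi_0$; the datum $v_0$ simply disappears, consistently with \ref{Pzeroeta} being first order in time. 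This shows that $(\theta,\mu,\varphi)$ is a weak solution of \ref{Pzeroeta} and completes the proof.
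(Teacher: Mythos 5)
Your proposal is correct and follows essentially the same route as the paper: compactness from the $\tau$-uniform bounds of Lemma~\ref{estitau1}, passage to the limit in the variational identities as in the proof of Theorem~\ref{maintheorem1}, and recovery of the regularity $\mu=w_t\in L^2(0,T;V)$ a posteriori by comparison in the (time-integrated) second equation, using that $\varphi_t\in L^2(0,T;V^*)$ has zero mean and that $m(\mu)$ is controlled via the third equation. Your explicit reconstruction $w(t)-m(w(t))={\cal N}(\varphi_0-\varphi(t))$ is precisely the rigorous implementation of the ``comparison in \eqref{pier18}'' that the paper's (much terser) proof invokes.
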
}%
\begin{proof}
\pcol{By the uniform estimates in Lemma~\ref{estitau1}, we can argue by compactness and infer that the 
above convergences hold. Next, we can check that $(\theta, \mu, \varphi)$ satisfies \eqref{pier17}, 
\eqref{pier19}, \eqref{pier20} and the time-integrated version of \eqref{pier18}: for this we proceed 
similarly as in the proof of Theorem~\ref{maintheorem1}.
At this point, in order to conclude that the limit $(\theta, \mu, \varphi)$ yields a weak solution to the 
problem \ref{Pzeroeta}, it is enough to check that \eqref{pier14} holds, that is, 
$\mu = w_t $ is in $L^2(0,T;V)$. However, being already $\mu\in L^2(0,T;V^*)$, then with the mean value 
under control, the regularity $\mu \in L^2(0,T;V)$ follows from a comparison in \eqref{pier18}  and the fact that $\varphi_t \in  L^2(0,T;V^*).$}
\end{proof}

\section*{Acknowledgments}
\pier{\pcol{The authors are very grateful to Professor Takeshi Fukao for his valuable 
comments and suggestions on a previous version of this paper.}
The research of PC has been performed within the framework 
of the MIUR-PRIN Grant 2020F3NCPX ``Mathematics for industry 4.0 (Math4I4)''.
In addition, PC indicates his affiliation 
to the GNAMPA (Gruppo Nazionale per l'Analisi Matematica, 
la Probabilit\`a e le loro Applicazioni) of INdAM (Isti\-tuto 
Nazionale di Alta Matematica).} 
The research of SK is supported 
by JSPS KAKENHI Grant Number JP23K12990.

%
%
%%==============================================================%%
%%==============                                  ==============%%
%%======                                                  ======%%
%%====                                                      ====%%
%%==                         Reference                        ==%%
%%====                                                      ====%%
%%======                                                  ======%%
%%==============                                  ==============%%
%%==============================================================%%


\begin{thebibliography}{99}
%
\bibitem{Barbu2010}
V. Barbu, 
``Nonlinear differential equations of monotone types in Banach spaces'',
Springer Monographs in Mathematics. Springer, New York, 2010. 
%	
\pier{\bibitem{Bon1}
A. Bonfoh, {\it The viscous Cahn--Hilliard equation with inertial term},
 Nonlinear Anal.\  {\bf 74}, 946-964 (2011).}
%
\pier{\bibitem{Bon2}		
A. Bonfoh,  {\it Existence and continuity of inertial manifolds for the hyperbolic relaxation of the viscous Cahn--Hilliard equation}, Appl. Math. Optim.\  {\bf 84}, 3339-3416 (2021).} 
%
\bibitem{Bro}
D. Brochet, {\it Maximal attractor and inertial sets for some second 
and fourth order phase field models}, 
Pitman Res. Notes Math. Ser.\ {\bf 296}, 77-85 (1993).
%
\bibitem{BHN}
D. Brochet, D. Hilhorst, A. Novick-Cohen, 
{\it Maximal attractor and inertial sets for a conserved phase field model},
 Adv. Diff. Equ.\ {\bf 1}, 547-578 (1996).
%
\bibitem{Caginalp1}
G. Caginalp, {\it Conserved-phase field system: Implications for kinetic undercooling}, 
Phys. Rev. B {\bf 38}, 789-791 (1988).
%
\bibitem{Caginalp2}
G. Caginalp, {\it The dynamics of a conserved phase-field system: 
Stefan-like, Hele-Shaw and Cahn-Hilliard models as asymptotic limits}, 
IMA J. Appl. Math.\ {\bf 44}, 77-94 (1990).%
%
\pier{\bibitem{CH}
J. W. Cahn, J. E. Hilliard,
{\it Free energy of a nonuniform system. {I}. Interfacial free energy},
J. Chem. Phys.\ {\bf 28}, 258-267 (1958).}
%

\rev{\bibitem{CGW}
C.~Cavaterra, M.~Grasselli, H.~Wu, 
{\it Non-isothermal viscous Cahn-Hilliard equation with inertial term and dynamic boundary conditions},
Commun. Pure Appl. Anal.\ {\bf 13},  1855-1890 (2014).}

\pier{\bibitem{CM} 
L. Cherfils and A. Miranville, 
{\it On the viscous Cahn--Hilliard system with two mobilities},
Discrete Contin. Dyn. Syst. Ser. S {\bf 16}, 3606-3621 (2023).}
%
\pier{\bibitem{CGSS1}
P. Colli, G. Gilardi, A. Signori, J. Sprekels,
{\it On a Cahn--Hilliard system with source term and thermal memory},
Nonlinear Anal. {\bf 240}, Paper No. 113461, 16 pp., (2024).}
%
\pier{\bibitem{CGSS2}
P. Colli, G. Gilardi, A. Signori, J. Sprekels,
{\it Optimal temperature distribution for a nonisothermal Cahn--Hilliard system with source term},
Appl. Math. Optim.\ {\bf 88}, no. 2, Paper No. 68, 31 pp., (2023).}
%
\pier{\bibitem{CoGaMi} M. Conti, S. Gatti, A. Miranville, 
{\it A perturbation of the Cahn--Hilliard equation with logarithmic nonlinearity},
J. Differential Equations {\bf 382}, 50-76 (2024).}
%
\pier{\bibitem{Dor} 
D. Dor, {\it On the hyperbolic relaxation of the Cahn-Hilliard equation with a mass source},
 Asymptot. Anal.\  {\bf 135},  25-53 (2023).}
%
\pier{\bibitem{DMP} 
D. Dor, A. Miranville, M. Pierre, 
{\it Hyperbolic relaxation of the viscous Cahn--Hilliard equation with a symport term for biological applications},
Math. Methods Appl. Sci.\ {\bf 47}, 5999-6035  (2024).}
%
\bibitem{G2007}
G. Gilardi, {\it On a conserved phase field model with irregular potentials 
and dynamic boundary conditions}, 
\pier{Rend. Cl. Sci. Mat. Nat.\ {\bf 141} (2007), 129–161 (2009).}
%
\rev{\bibitem{GPS}
M. Grasselli, H. Petzeltov\'{a}, G. Schimperna,
{\it Asymptotic behavior of a nonisothermal viscous {C}ahn-{H}illiard equation with inertial term},
J. Differential Equations {\bf 239}, 38-60 (2007).}
%
\bibitem{Lio69}
J. L. Lions, 
``Quelques m\'ethodes de r\'esolution des probl\`emes aux limites non lin\'eaires'', 
Dunod Gauthier-Villas, Paris, 1968.
%
\bibitem{SM}
A. Marveggio, G. Schimperna,
{\it On a non-isothermal Cahn-Hilliard model based on a microforce balance},
J. Differential Equations {\bf 274}, 924-970 (2021).
%
\bibitem{M2013}
A. Miranville, {\it On the conserved phase-field \pier{model}}, 
J. Math. Anal. Appl.\ {\bf 400}, 143-152 (2013).
%
\pier{\bibitem{CHbook}
A. Miranville, 
``The Cahn--Hilliard equation: recent advances and applications'',
Society for Industrial and Applied Mathematics, Philadelphia, PA, 2019.}

\pcol{\bibitem{Simon}
J. Simon, 
{\it Compact sets in the space $L^p(0,T;B)$},
Ann. Mat. Pura Appl. (4) {\bf 146}, 65-96 (1987).}


\end{thebibliography}
\end{document}